\setlist[enumerate]{leftmargin=1.4cm}
\definecolor{blueish}{rgb}{0.0, 0.53, 0.74}
\definecolor{blush}{rgb}{0.87, 0.36, 0.51}
\numberwithin{equation}{section}
\numberwithin{figure}{section}
\theoremstyle{definition}
\newtheorem{theorem}{Theorem}[section]
\newtheorem{corollary}[theorem]{Corollary}
\newtheorem{proposition}[theorem]{Proposition}
\newtheorem{definition}[theorem]{Definition}
\newtheorem{example}[theorem]{Example}
\newtheorem{notation}[theorem]{Notation}
\newtheorem{remark}[theorem]{Remark}
\newtheorem{lemma}[theorem]{Lemma}
\newcommand\qbin[3]{\textnormal{bin}_{#3}(#1,#2)}
\newcommand\bbq[1]{\bm{b}_q(#1)}
\newcommand{\numberset}{\mathbb}
\newcommand{\N}{\numberset{N}}
\newcommand{\Z}{\numberset{Z}}
\newcommand{\R}{\numberset{R}}
\newcommand{\F}{\numberset{F}}
\newcommand{\ellmax}{\ell_{\text{max}}}
\newcommand{\mV}{\mathcal{V}}
\newcommand{\mW}{\mathcal{W}}
\newcommand{\mL}{\mathcal{L}}
\newcommand{\mC}{\mathcal{C}}
\newcommand{\mA}{\mathcal{A}}
\newcommand{\mB}{\mathcal{B}}
\newcommand{\mF}{\mathcal{F}}
\newcommand{\mE}{\mathcal{E}}
\newcommand{\rk}{\textnormal{rk}}
\newcommand{\mU}{\mathcal{U}}
\newcommand{\mat}{\F_q^{n \times m}}
\renewcommand{\longrightarrow}{\to}
\newcommand{\drk}{d}
\newcommand{\MRD}{{\textnormal{MRD}}}
\newcommand*{\myproofname}{Proof of the claim}
\newtheorem{problem}{Problem}
\title[Common Complements of Linear Subspaces and the Sparseness of
MRD Codes]{Common Complements of Linear Subspaces \\ and the Sparseness of MRD Codes}
\author{Anina Gruica and Alberto Ravagnani}
\email{a.gruica@tue.nl,  a.ravagnani@tue.nl}
\subjclass[2010]{11T71, 05A16}
\address{Department of Mathematics and Computer Science \\
Eindhoven University of Technology, 
the Netherlands}
\thanks{The authors 
were partially supported by the Dutch Research Council through grant OCENW.KLEIN.539.}
\date{}
\begin{document}

\vspace*{-0.3cm}

\maketitle

\thispagestyle{empty}

\begin{abstract}
Motivated by applications to the theory of rank-metric codes, we study the problem of estimating the number of common complements of a family of subspaces over a finite field in terms of the cardinality of the family and its intersection structure. We derive upper and lower bounds for this number, along with their asymptotic versions as the field size tends to infinity. We then use these bounds to 
describe the general behaviour of common complements with respect to sparseness and density, showing that
the decisive property is
whether or not the number of spaces to be complemented is negligible with respect to the field size.
By specializing our results to matrix  spaces,
we obtain upper and lower bounds for the number of MRD codes in the rank metric. In particular, 
we
answer an open question 
in coding theory, proving 
that MRD codes are sparse for all parameter sets as the field size grows, with only very few exceptions.
We also investigate the  density of MRD codes as their number of columns tends to infinity,
obtaining a new asymptotic bound. Using properties of the Euler function from number theory, we then show that our bound improves on known results
for most parameter sets. 
We conclude the paper by establishing general
structural properties of the density function of rank-metric codes.
\end{abstract}

\bigskip

\bigskip

\section*{Introduction}

 A \textit{rank-metric code} is a linear space of matrices over a finite field $\F_q$ in which every non-zero matrix has rank bounded from below by an integer $d$ (called the \textit{minimum distance} of the code). Originally introduced by  Delsarte for combinatorial interest~\cite{delsarte1978bilinear}, in the last few decades rank-metric codes have 
been extensively studied in connection with various applications in information technology~\cite{koetter2008coding,gabidulin,SKK,roth1991maximum} and several areas of pure and applied mathematics,
including combinatorial designs,
rook theory, semifields, polymatroids and linear sets; see~\cite{delsarte1978bilinear,sheekey2020new,braun2016existence,lewis2020rook,gorla2018rankq,schmidt2020quadratic,csajbok2017maximum} among many others.

An open question in coding theory asks to compute the asymptotic density of rank-metric codes having maximum dimension, also known as
\textit{maximum rank distance} (MRD) \textit{codes}; see for example~\cite{byrne2020partition,antrobus2019maximal}.
More in detail, one fixes a value for the minimum distance and attempts to compute the asymptotics, as $q \to +\infty$,
of the proportion of MRD codes having 
that distance 
within the set of codes sharing the same 
dimension.
To date, three independent approaches have been developed in the attempt to solve this problem, based on enumerative combinatorics, the theory of spectrum-free matrices, and semifields; see~\cite{antrobus2019maximal,byrne2020partition,gluesing2020sparseness}. 
All these 
techniques 
show that $\F_q$-linear
MRD codes are not dense within the set of codes having a certain dimension. This is in sharp contrast with the behavior of MDS codes in the Hamming metric and of $\F_{q^m}$-linear MRD codes, which are natural analogues of $\F_q$-linear MRD codes and are instead 
dense as the field size tends to infinity~\cite{byrne2020partition,neri2018genericity}.

In this paper, we reinterpret the above question as a broader problem intersecting combinatorial geometry and extremal combinatorics, which is interesting in its own right. 
More precisely, we study the problem of estimating the number of complements shared by a family of  subspaces
of $\F_q^N$, say $\mA$, all of which have the same codimension $k$. The bounds that we derive take into account the \textit{intersection structure} of the spaces in $\mA$ (i.e., how many subspace pairs intersect in a given dimension), as well as the cardinality of $\mA$. The question of estimating the density of MRD codes turns out to be a very special instance of this general problem.

Our strategy to obtain upper and lower bounds for the number of common complements of the spaces in $\mA$ relies on the rigidity of certain graphs constructed from a linear lattice. We 
introduce a simple notion of regularity
of a bipartite graph with respect to~a function defined on its left-vertices, which we call an \textit{association}. This extends the concept of \textit{left-regularity} and defines a set of numerical parameters of the underlying graph.
We then describe the aforementioned complements as the isolated right-vertices of such a regular
bipartite graph, estimating their number in terms of the fundamental graph's parameters.
In turn, these parameters
can be computed using classical  methods from the theory of \textit{critical problems} in combinatorial geometry.

Of particular interest for us are the asymptotic versions of these bounds which, under certain assumptions, lead to the following general behavior of the common complements
with respect to sparseness/density.
If the cardinality of~$\mA$ is negligible with respect to the field size~$q$, then almost all $k$-subspaces of $\F_q^N$ are common complements of the spaces in $\mA$; moreover,~the proportion of non-common complements is in~$O(|\mA|/q)$ as $q \to +\infty$. Vice versa,
if the cardinality of~$\mA$ is preponderant with respect to the field size $q$, then the 
common complements are sparse (precise asymptotic estimates will be given). In our asymptotic analysis, we find particularly useful the notion of an \textit{asymptotic partial spread}, which we propose as the asymptotic analogue of the classical and homonymous definition from discrete geometry. 

In the second part of the paper we turn to the theory of rank-metric codes, specializing our results to matrix spaces over $\F_q$. 
Our main result is an upper bound on the number of MRD codes with given parameters; see Theorem~\ref{thm:mrdboundcc}. 
We also prove that the density of $n \times m$ MRD codes of minimum distance $d$ is in $${O\left(q^{-(d-1)(n-d+1)+1}\right)} \quad \textnormal{ as $q \to +\infty$;}$$
see Theorem~\ref{sparseness} for a precise statement.
This shows that MRD codes are  very sparse, unless $d=1$ or $n=d=2$, answering the question stated at the beginning of this introduction.

The third part of the paper concentrates on the asymptotic density of $n \times m$ MRD codes as $m \to +\infty$. We apply the graph theory machinery described above and obtain an upper bound on the limit superior of the density of these codes. Our estimates involve the 
\textit{Euler function}~$\phi$ from the theory of $q$-series. In fact, with the aid of Euler's Pentagonal Number Theorem we show that our asymptotic bounds improve on known results for most parameter sets. The question of determining whether or not MRD codes are sparse for $m$ large remains open.

In the last section of the paper we investigate some general properties of density functions in the rank metric, without restricting to MRD codes necessarily. This also gives us the chance to reinterpret known results from a new perspective.

\bigskip

\noindent \textbf{Outline.}
The remainder of the paper is organized as follows.
In Section~\ref{sec:prel} we illustrate the problems we study and introduce the relevant terminology.
Section~\ref{sec:counting}
contains preliminary formulas on linear spaces and tuples of functionals, which we will need repeatedly throughout the paper.
In Section~\ref{sec:bounds} we present our main results, deriving upper and lower bounds for the number of common complements of a family of subspaces using a graph theory approach. The asymptotic versions of these bounds are obtained in Section~\ref{sec:asy}.
We study the density function of MRD codes (and sometimes of more general rank-metric codes) for $q \to+\infty$ and $m\to+\infty$ 
in Sections~\ref{sec:rankmetric}
and~\ref{sec:m}, respectively.
Finally, structural properties of the density functions of rank-metric codes are established in Section~\ref{sec:prop}.

\bigskip

\section{Problem Formulation} \label{sec:prel}

In this section we recall some concepts from combinatorial geometry and state the main problems studied throughout the paper, illustrating their connection with the theory of rank-metric codes.
In the sequel, $q$ denotes a prime power and $\F_q$ is the finite field of $q$ elements. We let $$\qbin{a}{b}{q}= \prod_{i=0}^{b-1} \; \frac{q^a-q^i}{q^b-q^i}$$
be the $q$-binomial coefficient of integers $a \ge b \ge 0$; see e.g.~\cite{stanley2011enumerative}. It is well-known that $\qbin{a}{b}{q}$ counts the number of $b$-subspaces of an $a$-space over~$\F_q$.

\begin{definition}
Let $X$ be a vector space over $\F_q$ and let 
$W \le X$ be a subspace.
 A \textbf{complement} of $W$ in $X$ is a subspace 
 $W' \le X$ with $W \oplus W' = X$, i.e.,
 a complement of $W$ in the lattice of subspaces of $X$ (we denote by
 ``$\le$'' the inclusion relation of linear spaces).
 \end{definition}

This paper focuses on the problem of estimating the number of complements shared by a collection of subspaces. A strong motivation to study this problem comes
from the theory of rank-metric codes, as we will explain shortly.

\begin{problem} \label{pb:a}
Let $X$ be a vector space of finite dimension $N \ge 3$ over $\F_q$ and let $1 \le k \le N-1$ be an integer.
Let $\mA$ be a non-empty collection of subspaces of $X$, all of which have codimension~$k$. Give upper and lower bounds 
for the number of common complements in~$X$ of the spaces in~$\mA$.
\end{problem}

When studying Problem~\ref{pb:a}, we take into account structural properties of the set $\mA$ of combinatorial flavor, as we will explain later. 
In this paper we also investigate
the
asymptotic version of Problem~\ref{pb:a} as the field size tends to infinity, which can be stated as follows.

\begin{problem} \label{pb:a1}
Let $Q$ be the set of prime powers and let $(X_q)_{q \in Q}$ be a sequence of vector spaces, all of which have the same dimension $N \ge 3$ over $\F_q$.
Let $1 \le k \le N-1$ be an integer and let $(\mA_q)_{q \in Q}$ be a sequence of non-empty collections of linear spaces,
all of which have codimension~$k$,
with $A_q \le X_q$ for all $q \in Q$ and all $A_q \in \mA_q$. Determine the asymptotic behavior as $q \to +\infty$ of the ratio $|\mF_q|/\qbin{N}{k}{q}$, where 
$\mF_q$ is the collection of $k$-subspaces $W_q \le X_q$ that intersect some space in $\mA_q$ non-trivially. 
\end{problem}

Following the notation of Problem~\ref{pb:a1},
we say that the family of subspaces $(\mF_q)_{q \in Q}$  is \textbf{sparse} if $\lim_{q \to +\infty}\, |\mF_q|/\qbin{N}{k}{q} =0$ and \textbf{dense} if $\lim_{q \to +\infty}\, |\mF_q|/\qbin{N}{k}{q} =1$.

The asymptotic version of Problem~\ref{pb:a}, which we will study in Section~\ref{sec:asy}, is closely connected to an open question in coding theory on the density of MRD codes. 
We now briefly review 
some concepts from coding theory and 
explain this connection more in detail.

In the sequel, $m$ and $n$ denote integers with $m \ge n \ge 2$ and $\mat$ is the space of 
$n \times m$ matrices with entries in~$\F_q$.

\begin{definition}
A (\textbf{linear rank-metric}) \textbf{code} is a non-zero $\F_q$-linear subspace $\mC \le \mat$. Its \textbf{minimum distance}
is the integer $$\drk(\mC):=\min\{\rk(M) \mid M \in \mC, \; M \neq 0\}.$$ 
\end{definition}

A rank-metric code cannot have large dimension and minimum distance simultaneously.
The trade-off between these quantities is captured by the following result of Delsarte.

\begin{theorem}[Singleton-like Bound; see \cite{delsarte1978bilinear}]
\label{thm:slb}
Let $\mC \le \mat$ be a non-zero rank-metric code. We have
\begin{equation*} \label{singletonlikebound}
    \dim(\mC) \le m(n-\drk(\mC)+1).
\end{equation*}
\end{theorem}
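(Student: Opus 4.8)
The plan is to realize $\mC$ as a subspace of a matrix space of dimension $m(n-\drk(\mC)+1)$ by means of an injective \emph{puncturing} map that deletes rows. Write $d=\drk(\mC)$ for brevity. First I would introduce the $\F_q$-linear map $\pi\colon \mat \to \F_q^{(n-d+1)\times m}$ that sends a matrix to the submatrix formed by its first $n-d+1$ rows. This is well defined: since $n\le m$, the rank of any matrix in $\mat$ is at most $n$, hence $1\le d\le n$ and $n-d+1\ge 1$.

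The crucial step is to show that $\pi$ is injective on $\mC$. Let $M\in\mC$ with $\pi(M)=0$. Then all non-zero rows of $M$ lie among its last $d-1$ rows, so the row space of $M$ is generated by at most $d-1$ vectors and $\rk(M)\le d-1$. As $\rk(M)<d=\drk(\mC)$, the definition of minimum distance forces $M=0$. Therefore $\dim_{\F_q}(\mC)=\dim_{\F_q}(\pi(\mC))\le \dim_{\F_q}(\F_q^{(n-d+1)\times m})=m(n-d+1)$, which is the asserted inequality.

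I do not expect any genuine obstacle here; the only point that requires attention is \emph{which} coordinates to puncture. Deleting $d-1$ columns instead of rows would embed $\mC$ into $\F_q^{n\times(m-d+1)}$ and give only $\dim(\mC)\le n(m-d+1)$, which, since $m\ge n$, is weaker than $m(n-d+1)$; so the row version is the one to use. Beyond that, the argument rests only on the elementary observation that a matrix whose non-zero entries are confined to $d-1$ of its rows has rank at most $d-1$.
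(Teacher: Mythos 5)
Your proof is correct: projecting onto the first $n-\drk(\mC)+1$ rows is injective on $\mC$ precisely because a matrix supported on at most $\drk(\mC)-1$ rows has rank at most $\drk(\mC)-1$, and the dimension bound follows. The paper itself does not prove this theorem (it cites Delsarte), but the very same row-projection argument is the one it uses later in Lemma~\ref{lem:basis}, so your approach matches the paper's in all essentials; your side remark that puncturing rows rather than columns yields the sharper bound $m(n-\drk(\mC)+1)\le n(m-\drk(\mC)+1)$ is also accurate.
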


The most studied rank-metric codes are those having the maximum possible dimension allowed by their minimum distance.

\begin{definition}
A code $\mC \le \mat$ 
is called \textbf{maximum rank distance}  (\textbf{MRD} in short)
if it attains the bound 
of Theorem~\ref{thm:slb} with equality.
\end{definition}

The coding theory problem we are interested in asks to compute the asymptotics, as the field size tends to infinity, of the proportion of MRD codes among all codes with a given dimension.
More formally (and more generally), we propose the following terminology.

\begin{definition} \label{def:delta}
For $1 \le k \le mn$
and $1 \le d \le n$, let $$\delta_q(n \times m, k, d):= \frac{|\{\mC \le \mat \mid \dim(\mC)=k, \; \drk(\mC) \ge d\}|}{\qbin{mn}{k}{q}}$$
denote the \textbf{density} (\textbf{function}) of rank-metric codes with minimum distance at least $d$ among all $k$-dimensional codes. Their 
\textbf{asymptotic density} is instead
$\lim_{q \to + \infty} \delta_q(n \times m, k, d)$, when the limit exists.
\end{definition}

The following is currently an open question in coding theory.

\begin{problem} \label{pb:b}
Compute $\lim_{q \to +\infty} \delta_q(n \times m,
m(n-d+1),d)$ for all 
$1 \le d \le n$, when it exists.
\end{problem}

We are also interested in determining the asymptotic density of MRD codes as their number of columns tends to infinity, which is another open problem.

\begin{problem} \label{pb:b1}
Compute $\lim_{m \to +\infty} \delta_q(n \times m,
m(n-d+1),d)$ for all 
$1 \le d \le n$, when
it exists.
\end{problem}

In this paper, we regard the above two problems
as special 
instances of Problem~\ref{pb:a1}.
This allows us to solve Problem~\ref{pb:b} and to make progress on Problem~\ref{pb:b1} in Sections~\ref{sec:rankmetric} and~\ref{sec:m}, respectively. In those sections, 
we will also survey the literature centered around these problems
and briefly describe the three approaches 
that have been explored so far to solve them.

In the following remark we describe more in detail the connection between Problems~\ref{pb:a}, \ref{pb:b}, and~\ref{pb:b1}.
This will be needed in later sections.

\begin{remark} \label{rem:expl}
Consider the collection $\mU$ of subspaces $U \le \F_q^n$ with $\dim(U)=d-1$. For any~$U \in \mU$, denote by $\smash{\mat(U)}$ the set of all matrices 
whose column-space is contained in $U$. It is easy to see that $\smash{\mat(U)}$ is a linear space of dimension $m(d-1)$ and that every element of $\smash{\mat(U)}$ has rank smaller or equal to $d-1$ for all $U \in \mU$. Finally, let $\smash{\mA=\{\mat(U) \mid U \in \mU\}}$. By definition, the common complements of the spaces in $\mA$ are the rank-metric codes $\smash{\mC \le \mat}$ that do not contain any matrix of rank smaller equal to $d-1$ and with dimension
$\dim(\mC)=mn-m(d-1)=m(n-d+1)$. This means that the common complements of the spaces in $\mA$ are exactly the MRD codes of minimum distance~$d$ in $\mat$. This interpretation of MRD codes as common complements of linear spaces will be crucial in our approach.
\end{remark}

We conclude this section by stating a fifth problem considered in this paper.
Although our main focus is not on this question, its solution will facilitate 
the study of Problem~\ref{pb:a1}. We need the following terminology.

 \begin{definition}
 Let $X$ be a vector space over $\F_q$. 
 A \textbf{cone} in $X$ is a non-empty subset $K \subseteq X$ with $\lambda v \in K$ for all $v \in K$ and $\lambda \in \F_q$.

Let $S \subseteq X$ be a set with $0 \in S$. We say that $W \le X$
\textbf{distinguishes} $S$ if 
  $W \cap S = \{0\}$.
 If this is not the case, then we say that 
 $W$ \textbf{intersects} $S$ and write $W \triangleright S$.
\end{definition}

\begin{problem} \label{pb:c}
Let $X$ be a vector space of finite dimension $N \ge 3$ over $\F_q$ and let $K \subseteq X$ be a cone. For $1 \le k \le N$, give upper and lower bounds for the number of $k$-spaces $W \le X$ 
that distinguish $K$.
\end{problem}

In the notation of Problem~\ref{pb:c}, the task of computing the \textit{exact} number of
$k$-spaces $W$ distinguishing $K$ is known to be difficult in general.
This is the celebrated \textit{critical problem} for combinatorial geometries, proposed
by Crapo and Rota in~\cite[Chapter 16]{crapo1970foundations}. Its solution heavily depends on the combinatorics of the cone $K$, in a precise lattice theory sense; see~\cite{dowling1971codes,zaslavsky1987mobius,kung1996critical,ravagnani2019whitney}.
There is also a very nice connection between 
Problem~\ref{pb:c} and \textit{Sperner theory}, which indirectly provides a partial solution for it. We will comment on this in Remark~\ref{rmk:sperner}.

\begin{remark}\label{rmk:a_vs_d1}
Since the union of linear subspaces is clearly a cone, at a first glance Problem~\ref{pb:c} might be seen as a special instance of Problem~\ref{pb:a}.
Our approach to the latter problem however takes into account information that gets lost when replacing a collection of subspaces with their union.
More precisely, the bounds that we will derive take into account 
the cardinality of $\mA$ (in the notation of Problem~\ref{pb:a}) as well as their \textit{intersection structure}, i.e., the number of subspace pairs intersecting in a given dimension.
Both these pieces of information are lost when replacing $\mA$ with $\bigcup \mA$. For this reason, in this paper Problems~\ref{pb:a} and~\ref{pb:c} are treated as very different questions. We will return to this discussion in Remark~\ref{rmk:a_vs_d2} and Example~\ref{exe:a_vs_d}. 
\end{remark}

\bigskip

\section{Counting Linear Spaces and Functionals}
\label{sec:counting}

The goal of this section is to provide a combinatorial interpretation for the following expression, which will be used repeatedly throughout the paper to derive bounds and their asymptotic versions.

\begin{notation}\label{notat:nu}
For a prime power $q$ and non-negative integers
$N$, $k$ and $\ell$ with $N \ge 3$, $k < N$ and $N-2k \le \ell \le N-k$, let
\begin{align}
    \nu_q(N,k,\ell) &:= \qbin{N}{k}{q} - 2q^{k(N-k)} + q^{(2k-N+\ell)(N-k)}\prod_{i=\ell}^{N-k-1}(q^{N-k}-q^i),
\end{align}
where throughout this paper a product over an empty index set is 1 by convention.
\end{notation}

We will show that 
$\nu_q(N,k,\ell)$ counts the number of $k$-subspaces of an $N$-space over $\F_q$ having a particular property. More precisely, the following holds.

\begin{theorem} \label{thm:nu}
Let $N$, $k$ and $\ell$ be as in Notation~\ref{notat:nu}.
Let $X$ be an $N$-space over $\F_q$ and let $A,A',B,B' \le X$ be $(N-k)$-subspaces with $\ell=\dim(A \cap B) = \dim(A' \cap B')$. We have
\begin{equation*}
|\{W \le X \mid \dim(W)=k, \, 
W \triangleright A, \, W \triangleright B
\}| \, 
= \, |\{W \le X \mid \dim(W)=k, \, 
W \triangleright A', \, W \triangleright B'\}|.    
\end{equation*}
In words, the number of $k$-spaces
$W \le X$
intersecting $(N-k)$-spaces $A,B \le X$ only depends on $\ell = \dim(A \cap B)$. Moreover, this
number is precisely $\nu_q(N,k,\ell)$.
\end{theorem}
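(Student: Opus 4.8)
The plan is to reduce the theorem to two things: first, that the count only depends on $\ell$, and second, that the count equals $\nu_q(N,k,\ell)$. For the second (and harder) part I would use inclusion-exclusion on the complement: instead of counting $k$-spaces $W$ with $W \triangleright A$ and $W \triangleright B$, I count those that fail to meet $A$ non-trivially or fail to meet $B$ non-trivially, i.e. I use
\[
|\{W \mid \dim W = k,\ W \triangleright A,\ W \triangleright B\}|
= \binom{N}{k}_q - |\{W : W \cap A = 0\}| - |\{W : W \cap B = 0\}| + |\{W : W \cap A = 0,\ W \cap B = 0\}|.
\]
Here a ``$W$ with $W \cap A = 0$'' is exactly a $k$-dimensional space that is complementary (a complement) to the $(N-k)$-space $A$ inside $X$, so the two middle terms are each the number of complements of a fixed subspace of dimension $N-k$ in an $N$-space. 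That number is well known to be $q^{k(N-k)}$ — I would include a one-line justification (choose a basis of $X$ extending one of $A$; complements correspond to graphs of linear maps $X/A \to A$, of which there are $q^{k(N-k)}$, or equivalently count bases avoiding $A$ and divide by $|\mathrm{GL}_k|$). This accounts for the $-2q^{k(N-k)}$ term.

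The substantive computation is the last term: the number of $k$-subspaces $W$ with $W \cap A = 0$ and $W \cap B = 0$, where $A,B$ are $(N-k)$-spaces with $\dim(A \cap B) = \ell$. I would build $W$ one basis vector at a time (and then divide by $|\mathrm{GL}_k(\F_q)|$, or better, count flags directly and rescale at the end), but the cleaner route is: a $k$-space $W$ is disjoint from $A$ iff the projection $\pi\colon X \to X/A$ restricts to an injection on $W$, i.e. $W$ is the graph of a linear map $\phi\colon U \to A$ where $U = \pi(W)$ is a $k$-dimensional subspace of $X/A \cong \F_q^k$ — so necessarily $U = X/A$ and $W = \{u + \phi(u)\}$ for a unique linear iso-onto-its-image... more simply, $W$ disjoint from $A$ means $W$ is the graph of a \emph{linear map} $g\colon C \to A$ after fixing any complement $C$ of $A$: writing $X = C \oplus A$ with $\dim C = k$, the $k$-spaces disjoint from $A$ are exactly $\{c + g(c) : c \in C\}$ for $g \in \mathrm{Hom}(C,A)$, giving the $q^{k(N-k)}$ again. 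Then $W = \{c + g(c)\}$ is also disjoint from $B$ iff the only $c \in C$ with $c + g(c) \in B$ is $c = 0$. Choosing $C$ adapted to the pair $(A,B)$ — e.g. so that $C \cap B$ has the expected dimension $\dim C + \dim B - N = k - (N-k) = 2k-N$ (possible since $\dim(A\cap B)=\ell$; I would set up coordinates via a basis of $X$ refining the subspaces $A\cap B \subseteq A, B \subseteq A+B \subseteq X$) — this disjointness condition becomes a full-rank condition on a submatrix of the matrix of $g$, whose columns/rows are indexed by a basis of $C$ split according to $C\cap B$ and a complement. Counting the maps $g$ satisfying it is a standard ``count surjections / full-rank matrices'' computation and produces exactly the remaining factor $q^{(2k-N+\ell)(N-k)}\prod_{i=\ell}^{N-k-1}(q^{N-k}-q^i)$; the empty-product convention handles $\ell = N-k$ (i.e. $A = B$). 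Running the bookkeeping carefully and matching it to the stated closed form is where the main effort lies.

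That same explicit count shows dependence on $A,B$ only through $\ell$ — the adapted coordinate system was built purely from the flag $A\cap B \subseteq A, B \subseteq A+B$, whose isomorphism type is determined by $\dim(A\cap B) = \ell$ together with the fixed dimensions $N$ and $N-k$. Alternatively, and more slickly for the first assertion, I would invoke transitivity of $\mathrm{GL}(X)$ on pairs of $(N-k)$-subspaces with a prescribed intersection dimension: given $(A,B)$ and $(A',B')$ with $\dim(A\cap B) = \dim(A'\cap B') = \ell$, Witt-type extension produces $\sigma \in \mathrm{GL}(X)$ with $\sigma(A) = A'$, $\sigma(B) = B'$, and $W \mapsto \sigma(W)$ is a bijection between the two sets of $k$-spaces, since $W \triangleright A \iff \sigma(W) \triangleright \sigma(A)$. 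This gives the ``depends only on $\ell$'' part for free, and then only the value $\nu_q(N,k,\ell)$ needs the inclusion–exclusion computation above. I expect the main obstacle to be purely organizational: setting up the adapted basis of $X$ so that the ``disjoint from both $A$ and $B$'' condition visibly becomes a rank condition on an $(N-k)\times(N-k)$-type matrix with a prescribed $\ell$-dimensional ``already-determined'' block, and then verifying that the number of completions telescopes to the product $\prod_{i=\ell}^{N-k-1}(q^{N-k}-q^i)$ with the correct power-of-$q$ prefactor.
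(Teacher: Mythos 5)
Your proposal is correct in substance, and it reaches the value $\nu_q(N,k,\ell)$ by a genuinely more elementary route than the paper, even though both arguments share the same inclusion--exclusion skeleton: the number of $k$-spaces meeting both $A$ and $B$ equals $\qbin{N}{k}{q}$ minus twice the number $q^{k(N-k)}$ of complements of a fixed $(N-k)$-space, plus the number of common complements of $A$ and $B$. The paper evaluates that last term by passing to $(N-k)$-tuples of functionals (Lemma~\ref{lem:functionals}), using the Crapo--Rota theorem (Theorem~\ref{thm:CR}) and the multiplicative identity $\tau_q(r,A\cup B)\,\tau_q(r,A\cap B)=\tau_q(r,A)\,\tau_q(r,B)$ proved via two applications of Stanley's Modular Factorization Theorem (Lemma~\ref{lem:prodfunc}); the independence from the particular pair $(A,B)$ is then automatic from the resulting formula. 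You instead parametrize the $k$-spaces disjoint from $A$ as graphs of maps $g\in\mathrm{Hom}(C,A)$ for a complement $C$ of $A$, count directly those graphs avoiding $B$, and get the ``depends only on $\ell$'' claim from $\mathrm{GL}(X)$-transitivity on pairs of $(N-k)$-spaces with prescribed intersection dimension --- all of which is sound. One slip to fix: the ``expected dimension'' of $C\cap B$ is not $2k-N$ (for a generic complement of $A$ it is $k+(N-k)-N=0$), and the cleanest adapted choice is in fact to take $C$ containing a complement $D$ of $A\cap B$ inside $B$, so that $\dim(C\cap B)=N-k-\ell$; then $\{c+g(c):c\in C\}\cap B=\{0\}$ becomes injectivity of the induced block $D\to A/(A\cap B)$ of $g$, and the count is $\prod_{i=0}^{N-k-\ell-1}(q^{N-k-\ell}-q^i)$ choices for that block times $q^{\ell(N-k-\ell)+(2k-N+\ell)(N-k)}$ free entries, which indeed simplifies to $q^{(2k-N+\ell)(N-k)}\prod_{i=\ell}^{N-k-1}(q^{N-k}-q^i)$. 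In terms of trade-offs, your argument is self-contained linear algebra and avoids characteristic polynomials entirely, while the paper's version embeds the computation into the critical-problem framework it emphasizes and lets the modular factorization identity absorb all the bookkeeping, at the price of invoking two nontrivial external theorems.
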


Theorem~\ref{thm:nu} will be established after a series of preliminary results on linear functionals, which are natural objects in the theory of 
\textit{critical problems}~\cite{kung1996critical,crapo1970foundations}. While there are more direct approaches to obtain a closed formula for the quantity in Theorem~\ref{thm:nu},
the expressions we obtained with such approaches are difficult to estimate as $q \to +\infty$ (and we will need these 
asymptotic estimates in Section~\ref{sec:asy}).

To simplify the study of  $\nu_q(N,k,\ell)$, 
throughout this section we 
fix a prime power~$q$, an integer $N \ge 3$  and a vector space $X$ having dimension $N$ over $\F_q$. The particular choice of~$X$ is irrelevant. 
We start by introducing the following simple concepts.

\begin{definition}
\begin{enumerate}[label={(\arabic*)}]
\item 
A \textbf{functional} on 
$X$ is a linear function $f:X \to \F_q$.
The space of functionals on $X$ is denoted by $X^*$.
\item Let
$r \ge 1$ be an integer. The \textbf{kernel}
of an $r$-tuple $F=(f_1,...,f_r) \in (X^*)^r$
is the linear space $\ker(F):=\ker(f_1) \cap \cdots \cap \ker(f_r)$.
\item 
Let $S \subseteq X$ be a set with $0 \in S$ and let $r \ge 1$ be an integer.
We say that $F \in (X^*)^r$ \textbf{distinguishes} $S$ if
$\ker(F)$ distinguishes~$S$. Similarly, we say that~$F$ \textbf{intersects} $S$ if $\ker(F)$ intersects
$S$. In the latter case we write 
$F \triangleright S$. Finally, we let
\begin{align*}
    \tau_q(r,S) := |\{F \in (X^*)^r \mid F \text{ distinguishes } S\}|.
\end{align*}
\end{enumerate}
\end{definition}

A celebrated theorem by Crapo and Rota~\cite[Chapter 16]{crapo1970foundations} expresses $\tau_q(r,S)$ in terms of the combinatorics of the set $S$. More precisely, it shows that 
$\tau_q(r,S)$ is obtained by evaluating the characteristic polynomial of the geometric lattice generated by $S$ at $q^s$.

\begin{definition} \label{def:L}
Let $S \subseteq X$ be a subset with $0 \in S$. We denote by $\mL(S)$ the geometric lattice whose elements are the subspaces of $X$ having a basis made of elements of $S$, ordered by inclusion. 
We also let $\mu_{S}$ and $\rk(S)$ denote its M\"obius function and rank, respectively (note that $\rk(S)$ is
simply the dimension of the 
space generated by the elements of $S$).
The \textbf{characteristic polynomial} of $S$ is
$$\chi(S,\lambda) := \sum_{W \in \mL(S)}
\mu_S(W) \, \lambda^{\rk(S)-\dim(W)} \in \Z[\lambda].$$
\end{definition}

For some sets $S$, the characteristic polynomial $\chi(S,\lambda)$ can be explicitly computed, although this is a very difficult task in general. 

\begin{example}\label{ex:lin}
For
a $k$-space $A \le X$ we have
$\chi(A,\lambda) = \prod_{i=0}^{k-1}(\lambda-q^i).
$ This formula is well-known and follows, for example, from~\cite[Section 3]{stanley1971modular}.
\end{example}

We can now state the result of Crapo and Rota.

\begin{theorem}[\text{see \cite[Chapter 16]{crapo1970foundations}}] \label{thm:CR}
Let 
$S \subseteq X$ be a subset with $0 \in X$ and let $r \ge 1$ be an integer. We have
$$\tau_q(r,S) = q^{r(N-r)} \, \chi(S,q^r).$$
In particular, for all $k$-spaces $A \le X$ we have
$$\tau_q(r,A) = q^{r(N-k)} \prod_{i=0}^{k-1} (q^r-q^i).$$
\end{theorem}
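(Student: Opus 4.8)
The plan is to reproduce the classical Möbius-inversion proof of the critical theorem, carried out in the finite lattice $\mL(S)$. First I would set up the counting. For $W\in\mL(S)$, let $n_q(W)$ be the number of $r$-tuples $F\in(X^*)^r$ with $W\le\ker(F)$; the functionals vanishing on $W$ form the annihilator $W^\perp\le X^*$, which has dimension $N-\dim(W)$, so $n_q(W)=q^{r(N-\dim(W))}$. The structural fact I would record is that $\mL(S)$ is closed under sums of subspaces: if $W_1,W_2$ each admit a basis contained in $S$, then so does $W_1+W_2$ (extract one from the union of the two given bases). Consequently, for a fixed $F$ the family $\{W\in\mL(S):W\le\ker(F)\}$ is nonempty (it contains $\{0\}$) and closed under sums, hence has a unique maximal element $\kappa(F)\in\mL(S)$.

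Next I would convert ``distinguishing'' into a condition on $\kappa(F)$. Since every nonzero $s\in S$ spans a member $\langle s\rangle\in\mL(S)$, one checks that $F$ distinguishes $S$ iff $\ker(F)\cap S=\{0\}$ iff $\kappa(F)=\{0\}$, and more generally that $W\le\ker(F)\iff W\le\kappa(F)$ for every $W\in\mL(S)$. Partitioning the $r$-tuples according to the value of $\kappa$ then gives, for each $W\in\mL(S)$,
\[
q^{r(N-\dim(W))}=n_q(W)=\sum_{\substack{W'\in\mL(S)\\ W'\ge W}}\bigl|\{F\in(X^*)^r:\kappa(F)=W'\}\bigr|.
\]
Möbius inversion in the finite poset $\mL(S)$ yields $|\{F:\kappa(F)=W\}|=\sum_{W'\ge W}\mu_S(W,W')\,q^{r(N-\dim(W'))}$, and setting $W=\{0\}$, together with the equivalence above, gives
\[
\tau_q(r,S)=\sum_{W'\in\mL(S)}\mu_S(W')\,q^{r(N-\dim(W'))}.
\]

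To conclude I would pull $q^{r(N-\rk(S))}$ out of every summand, using $N-\dim(W')=(N-\rk(S))+(\rk(S)-\dim(W'))$; what remains is exactly $\chi(S,q^r)$ by Definition~\ref{def:L}, so $\tau_q(r,S)=q^{r(N-\rk(S))}\chi(S,q^r)$ (recall $\rk(S)=\dim\langle S\rangle$). For the ``in particular'' part I would take $S=A$ a $k$-space, where $\rk(A)=k$ and $\chi(A,\lambda)=\prod_{i=0}^{k-1}(\lambda-q^i)$ by Example~\ref{ex:lin}, obtaining $\tau_q(r,A)=q^{r(N-k)}\prod_{i=0}^{k-1}(q^r-q^i)$. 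This case also admits a direct proof: $\ker(F)\cap A=\{0\}$ holds iff the restrictions $f_1|_A,\dots,f_r|_A$ span $A^*$, the number of ordered spanning $r$-tuples of a $k$-dimensional $\F_q$-space equals $\prod_{i=0}^{k-1}(q^r-q^i)$, and each of these $r$ functionals lifts to $X^*$ in $q^{N-k}$ ways.

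The step I expect to be the main obstacle is the poset set-up: ensuring that $\mL(S)$ is genuinely a lattice closed under sums, that $\kappa(F)$ is well defined, and that Möbius inversion over $\mL(S)$ is legitimate. This is precisely where the hypothesis that members of $\mL(S)$ have a basis inside $S$ — equivalently, that they are the flats of the matroid represented by $S$ — comes in. Once this is secured, everything downstream is a reindexing of the defining sum of $\chi(S,\lambda)$ together with elementary counting of functionals.
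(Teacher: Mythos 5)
Your proposal is correct, and it cannot be compared with an internal argument because the paper gives none: Theorem~\ref{thm:CR} is quoted from Crapo--Rota \cite[Chapter 16]{crapo1970foundations} without proof. What you write is a sound, self-contained rendition of the standard M\"obius-inversion proof of the critical theorem: the count $n_q(W)=q^{r(N-\dim W)}$, the closure of $\mL(S)$ under sums giving a well-defined maximal element $\kappa(F)\in\mL(S)$ below $\ker(F)$, the equivalences ``$F$ distinguishes $S$ iff $\kappa(F)=\{0\}$'' and ``$W\le\ker(F)$ iff $W\le\kappa(F)$'' for $W\in\mL(S)$, and then inversion over the finite lattice $\mL(S)$. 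One point you implicitly rely on and which is worth making explicit is that the elements of $\mL(S)$ are exactly the spans of subsets of $S$, so the lattice rank of $W\in\mL(S)$ equals $\dim(W)$; this is what makes your reindexed sum literally coincide with $\chi(S,\lambda)$ as in Definition~\ref{def:L}. Your alternative direct count for the case $S=A$ a subspace (restrictions spanning $A^*$, then lifting) is a nice independent check of the ``in particular'' statement.

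One discrepancy you should flag rather than silently fix: your derivation yields $\tau_q(r,S)=q^{r(N-\rk(S))}\,\chi(S,q^r)$, whereas the display in the theorem as printed reads $q^{r(N-r)}\,\chi(S,q^r)$. Your exponent is the correct one: it is the classical form of the critical theorem, it is what the specialization to a $k$-space requires (there the prefactor is $q^{r(N-k)}$ with $k=\rk(A)$, not $q^{r(N-r)}$), and it is what Lemma~\ref{lem:prodfunc} and the proof of Theorem~\ref{thm:nu} actually use. The printed exponent $r(N-r)$ appears to be a typo (as is ``$0\in X$'' for ``$0\in S$'' in the hypothesis), so your proof establishes the statement in the form in which the paper applies it.
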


Counting functionals that distinguish a set of vectors is equivalent to counting spaces that distinguish the same set. For some parameters, the mentioned relation between functionals and spaces is particularly simple, as the next lemma illustrates.

\begin{lemma} \label{lem:functionals}
Let
$S \subseteq X$ be a subset with $0 \in S$. Fix any integer $k$ with the property that 
$k \ge \max\{\dim(W) \mid W \le X, \, W \mbox{ distinguishes } S\}$. The number of $k$-spaces $W \le X$ distinguishing~$S$ is 
$$\frac{\tau_q(N-k,S)}{\prod_{i=0}^{N-k-1} (q^{N-k}-q^i)}.$$
\end{lemma}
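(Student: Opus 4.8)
The plan is to set up a surjection from the set of $(N-k)$-tuples of functionals that distinguish $S$ onto the set of $k$-spaces distinguishing $S$, by sending a tuple $F$ to $\ker(F)$, and then argue that every fiber of this map has the same size $\prod_{i=0}^{N-k-1}(q^{N-k}-q^i)$. Counting the domain by $\tau_q(N-k,S)$ and dividing by the common fiber size then gives the claimed formula.

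First I would observe that if $F=(f_1,\dots,f_{N-k})\in(X^*)^{N-k}$ distinguishes $S$, then $\ker(F)$ is a subspace that distinguishes $S$, hence $\dim\ker(F)\le k$ by the hypothesis on $k$; on the other hand $\ker(F)=\ker(f_1)\cap\cdots\cap\ker(f_{N-k})$ is an intersection of $N-k$ hyperplanes (or all of $X$, if some $f_i=0$), so $\dim\ker(F)\ge N-(N-k)=k$. Therefore $\dim\ker(F)=k$ exactly, which forces the $f_i$ to be linearly independent in $X^*$, and $\ker(F)$ is indeed a $k$-space distinguishing $S$. This shows the map $F\mapsto\ker(F)$ lands in the target set. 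Conversely, given any $k$-space $W\le X$ distinguishing $S$, pick a basis of $X^*$ extending nothing in particular and use the standard duality: the functionals vanishing on $W$ form an $(N-k)$-dimensional subspace $W^{\perp}\le X^*$, and any ordered basis $(f_1,\dots,f_{N-k})$ of $W^{\perp}$ is a tuple with $\ker(F)=W$ that distinguishes $S$. So the map is surjective onto the $k$-spaces distinguishing $S$.

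Next I would compute the fiber size. The fiber over a fixed $k$-space $W$ consists precisely of the ordered bases of the $(N-k)$-dimensional space $W^{\perp}$: indeed $\ker(F)=W$ with $F$ distinguishing $S$ forces, as above, $\dim\ker(F)=k$ and hence $f_1,\dots,f_{N-k}$ linearly independent and spanning $W^{\perp}$ (each $f_i$ vanishes on $W$, and there are $N-k$ independent of them, matching $\dim W^{\perp}$). The number of ordered bases of an $(N-k)$-dimensional $\F_q$-vector space is $\prod_{i=0}^{N-k-1}(q^{N-k}-q^i)$, which is independent of $W$. Summing over all $k$-spaces $W$ distinguishing $S$ gives
\[
\tau_q(N-k,S)=|\{W\le X\mid \dim(W)=k,\ W\text{ distinguishes }S\}|\cdot\prod_{i=0}^{N-k-1}(q^{N-k}-q^i),
\]
and dividing yields the lemma.

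The only delicate point — and the step I would be most careful with — is verifying that $\dim\ker(F)=k$ for \emph{every} distinguishing tuple $F$, rather than merely $\dim\ker(F)\le k$: this is where the hypothesis ``$k\ge\max\{\dim(W)\mid W\text{ distinguishes }S\}$'' is used, and without it the map $F\mapsto\ker(F)$ would have fibers of varying sizes (tuples whose kernel has dimension strictly less than $k$) and the clean division would fail. Everything else is elementary linear algebra over $\F_q$ together with the definition of $\tau_q$.
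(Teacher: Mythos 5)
Your proposal is correct and follows essentially the same route as the paper: both consider the map $F \mapsto \ker(F)$ from distinguishing $(N-k)$-tuples of functionals to $k$-spaces distinguishing $S$, use the hypothesis on $k$ to show every such kernel has dimension exactly $k$, and divide $\tau_q(N-k,S)$ by the constant fiber size $\prod_{i=0}^{N-k-1}(q^{N-k}-q^i)$. The only difference is that you spell out the fiber count as the number of ordered bases of $W^{\perp}$, a detail the paper leaves to the reader.
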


\begin{proof}
Define the sets
\begin{align*}
    \mathbf{A} &:= \{F \in (X^*)^{N-k} \mid F \text{ distinguishes } S\}, \\
    \mathbf{B} &:= \{W \le X \mid \dim(W) = k, \, W \text{ distinguishes } S\}.
\end{align*}
Let $\varphi: \mathbf{A} \rightarrow \mathbf{B}$ be the map defined by $\varphi: F \mapsto \ker(F)$ for all $F \in \mathbf{A}$. We claim that $\varphi$ is well-defined. To see this, note that $\dim(\ker(F)) \ge N-N+k=k$. Since $k \ge \max\{\dim(W) \mid W \le X, \, W \mbox{ distinguishes } S\}$ by assumption, it must hold that $\dim(\ker(F))=k$. This shows that $\varphi$ is indeed well-defined.
As a next step, we compute the size of the fiber of an arbitrary $W \in \mathbf{B}$ as follows:
\begin{align*}
    |\varphi^{-1}(W)| = |\{F \in (X^*)^{N-k} \mid \ker(F)=W\}|
    =  \prod_{i=0}^{N-k-1}(q^{N-k}-q^i),
\end{align*}
where the latter equality is not difficult to see and left to the reader.
Therefore,
\begin{equation*}
\tau_q(N-k,S) = 
\sum_{ W \in \mathbf{B}}|\varphi^{-1}(W)|
= |\mathbf{B}| \prod_{i=0}^{N-k-1}(q^{N-k}-q^i).
\qedhere
\end{equation*}
\end{proof}

The final step towards
a combinatorial interpretation for $\nu_q(N,k,\ell)$ is the following formula relating tuples of functionals distinguishing linear spaces.
The proof technique combines the aforementioned result by Crapo and Rota 
(Theorem~\ref{thm:CR} above) with Stanley's Modular Factorization Theorem for geometric lattices~\cite{stanley1971modular}.

\begin{lemma} \label{lem:prodfunc}
Let $A,B \le X$ be subspaces.
For all $r \ge 1$ we have
$$\tau_q(r,A \cup B) \, \tau_q(r,A \cap B)= \tau_q(r,A) \, \tau_q(r,B).$$
\end{lemma}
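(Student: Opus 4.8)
The plan is to express everything in terms of characteristic polynomials via the Crapo--Rota formula (Theorem~\ref{thm:CR}) and then reduce the claimed identity to a multiplicativity property of characteristic polynomials of the lattice generated by $A \cup B$. First I would observe that $\mL(A \cup B)$ is the geometric lattice generated by the vectors in $A \cup B$, which is exactly the interval $[0, A+B]$ in the subspace lattice of $X$; that is, $\mL(A \cup B)$ is the full lattice of subspaces of $A + B$, since every subspace of $A+B$ has a basis of vectors, and those vectors all lie in $A \cup B$ if we are careful --- actually the cleaner statement is that $A$ and $B$ are both flats (they are subspaces), their join in the lattice is $A+B$ and their meet is $A \cap B$, and the lattice $\mL(A\cup B)$ is the interval $[0,A+B]$ in the subspace lattice. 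With $\rk(A \cup B) = \dim(A+B)$, Example~\ref{ex:lin} generalizes: $\chi(A \cup B, \lambda) = \prod_{i=0}^{\dim(A+B)-1}(\lambda - q^i)$, and similarly $\chi(A \cap B, \lambda) = \prod_{i=0}^{\dim(A \cap B)-1}(\lambda - q^i)$.

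Next I would invoke Theorem~\ref{thm:CR} to write, for any set $S$ with $0 \in S$ sitting inside $X$ of dimension $N$,
\begin{equation*}
\tau_q(r,S) = q^{r(N-r)} \, \chi(S, q^r).
\end{equation*}
Applying this to $S = A \cup B$, $S = A \cap B$, $S = A$, and $S = B$, the factors of $q^{r(N-r)}$ that appear are the same for each of the four terms, so they cancel: $\tau_q(r,A\cup B)\,\tau_q(r,A\cap B)$ contributes $q^{2r(N-r)}$, and so does $\tau_q(r,A)\,\tau_q(r,B)$. Thus the identity $\tau_q(r,A\cup B)\tau_q(r,A\cap B) = \tau_q(r,A)\tau_q(r,B)$ is equivalent to
\begin{equation*}
\chi(A \cup B, q^r)\,\chi(A \cap B, q^r) = \chi(A, q^r)\,\chi(B, q^r).
\end{equation*}

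It then suffices to prove the polynomial identity $\chi(A \cup B, \lambda)\,\chi(A \cap B, \lambda) = \chi(A, \lambda)\,\chi(B, \lambda)$ in $\Z[\lambda]$, since if two polynomials agree at infinitely many points $q^r$ they are equal; but in fact one can just verify it directly. Using the product formulas above, $\chi(A,\lambda) = \prod_{i=0}^{a-1}(\lambda - q^i)$ with $a = \dim A$, $\chi(B,\lambda) = \prod_{i=0}^{b-1}(\lambda - q^i)$ with $b = \dim B$, $\chi(A\cap B,\lambda) = \prod_{i=0}^{c-1}(\lambda-q^i)$ with $c = \dim(A\cap B)$, and $\chi(A\cup B,\lambda) = \prod_{i=0}^{d-1}(\lambda-q^i)$ with $d = \dim(A+B) = a + b - c$ by the dimension formula. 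Then both sides equal $\prod_{i=0}^{a+b-c-1}(\lambda-q^i) \cdot \prod_{i=0}^{c-1}(\lambda-q^i)$ once one checks that the multiset $\{0,\dots,a-1\}\cup\{0,\dots,b-1\}$ equals $\{0,\dots,d-1\}\cup\{0,\dots,c-1\}$ as multisets, which holds because $\min(a,b) \ge c$ (as $A \cap B \subseteq A,B$) and $d = a+b-c = \max(a,b) + (\min(a,b) - c)$; matching ranks gives the equality. Alternatively, and more in the spirit of the remark preceding the lemma, one cites Stanley's Modular Factorization Theorem: $A \cap B$ (equivalently $A$, or $B$) is a modular flat of $\mL(A \cup B)$ --- every subspace is a modular element of the full subspace lattice --- and the factorization theorem gives $\chi(A\cup B,\lambda) = \chi([0, A\cap B],\lambda) \cdot (\text{complementary factor})$ with the complementary factor identified as $\chi(A,\lambda)\chi(B,\lambda)/\chi(A\cap B,\lambda)^{?}$; I would use whichever phrasing is shorter, but the elementary multiset computation is self-contained and avoids delicacies.

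The main obstacle I anticipate is the bookkeeping around $\mL(A \cup B)$: one must be slightly careful that this lattice really is the interval $[0, A+B]$ of the subspace lattice and that $A$, $B$, $A\cap B$ sit inside it as the expected flats with the expected ranks, so that Example~\ref{ex:lin} applies to each. Once that identification is in place, the rest is the cancellation of the $q^{r(N-r)}$ factors and an elementary check that the two products of linear factors coincide; there is no hard estimate or clever construction needed.
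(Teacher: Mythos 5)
Your reduction to a polynomial identity via Theorem~\ref{thm:CR} is fine (and the prefactors indeed cancel, since $\dim(A+B)+\dim(A\cap B)=\dim A+\dim B$), but the way you then compute $\chi(A\cup B,\lambda)$ contains a genuine error. By Definition~\ref{def:L}, $\mL(A\cup B)$ consists only of the subspaces having a basis of vectors lying in the set $A\cup B$; this is \emph{not} the interval $[0,A+B]$ of the subspace lattice. For instance, with $A=\langle e_1\rangle$, $B=\langle e_2\rangle$ in $\F_q^2$, the line $\langle e_1+e_2\rangle$ has no basis inside $A\cup B$, so $\mL(A\cup B)=\{0,A,B,A+B\}$ and a direct M\"obius computation gives $\chi(A\cup B,\lambda)=(\lambda-1)^2$, not $(\lambda-1)(\lambda-q)$ as your ``generalization'' of Example~\ref{ex:lin} would predict. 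Your elementary verification also fails independently of this: the multiset identity you assert, $\{0,\dots,a-1\}\uplus\{0,\dots,b-1\}=\{0,\dots,a+b-c-1\}\uplus\{0,\dots,c-1\}$, is false unless $c=\min(a,b)$ (already for $a=b=1$, $c=0$ the two sides are $\{0,0\}$ and $\{0,1\}$), i.e.\ unless one space contains the other. So the ``self-contained multiset computation'' does not prove the identity $\chi(A\cup B,\lambda)\,\chi(A\cap B,\lambda)=\chi(A,\lambda)\,\chi(B,\lambda)$; it would in fact disprove it if your formula for $\chi(A\cup B,\lambda)$ were correct. The identity is true, but only with the correct (smaller) lattice $\mL(A\cup B)$, as the $\F_q^2$ example shows: $(\lambda-1)^2\cdot 1=(\lambda-1)(\lambda-1)$.

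The alternative you mention in passing is the paper's actual route, but as sketched it is not yet a proof. One needs that $A$ is a modular element of the geometric lattice $\mL(A\cup B)$ itself (not of the full subspace lattice, which is a different lattice), and a single application of Stanley's Modular Factorization Theorem only yields $\chi(A\cup B,\lambda)=\chi(A,\lambda)\cdot\Sigma(\lambda)$ for a certain sum $\Sigma(\lambda)$ over elements $W\in\mL(A\cup B)$ with $W\cap A=\{0\}$; to identify $\Sigma(\lambda)$ as $\chi(B,\lambda)/\chi(A\cap B,\lambda)$ the paper applies the factorization theorem a second time, to $\mL(B)$ with modular element $A\cap B$, and observes that the two complementary sums coincide. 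Filling in exactly these two applications (and checking the modularity claims) is what is missing from your proposal.
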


\begin{proof}
Consider the geometric lattice $\mL(A \cup B)$; see Definition~\ref{def:L}. The rank of $A \cup B$ is $\rk(A\cup B)=\dim(A+B)$. It is easy to see that $A $ is a modular element of $\mL(A \cup B)$ and thus we can use Stanley's Modular Factorization Theorem~\cite[Theorem 2]{stanley1971modular} 
as follows:
\begin{align} 
    \chi (A \cup B, \lambda) &= \chi(A, \lambda)\, \sum_{\substack{W \in \mL(A \cup B)\\ W \cap A =\{0\}}}\mu_{A \cup B}(W)\, \lambda^{\rk(A \cup B)-\dim(A)-\dim(W)} \nonumber \\
    &=\label{pf:prodfunc1} \chi(A, \lambda) \, \sum_{\substack{W \in \mL(B)\\ W \cap A =\{0\}}}\mu_{B}(W)\, \lambda^{\dim(B)-\dim(A \cap B)-\dim(W)}.
\end{align}
We now apply again Stanley's Modular Factorization Theorem to the lattice $\mL(B)$ and the modular element $A \cap B$ of $\mL(B)$, obtaining
\begin{align} \label{pf:prodfunc2}
    \chi (B, \lambda) &= \chi(A \cap B, \lambda)\, \sum_{\substack{W \in \mL(B)\\ W \cap A =\{0\} }}\mu_{B}(W)\, \lambda^{\dim(B)-\dim(A \cap B)-\dim(W)}.
\end{align}
Using~\eqref{pf:prodfunc1} and \eqref{pf:prodfunc2} together we get 
$\chi (A \cup B, \lambda)\, \chi(A \cap B, \lambda) = \chi (A, \lambda) \, \chi (B, \lambda)$.
Finally, the statement of the lemma can easily be derived by combining the latter identity with Theorem~\ref{thm:CR}.
\end{proof}

We are now ready to establish the main result of this section, providing a combinatorial interpretation for $\nu_q(N,k,\ell)$.

\begin{proof}[Proof of Theorem~\ref{thm:nu}]
Fix arbitrary $(N-k)$-spaces $A,B \le X$ that intersect in dimension $\ell$.
The largest dimension of a subspace $W \le X$ that distinguishes $A \cup B$ is at most $k$. Therefore by Lemma~\ref{lem:functionals} we have 
\begin{multline} \label{pf:nu}
    |\{W \le X \mid \dim(W)=k, \, 
W \triangleright A  \mbox{ and } W \triangleright B
\}| = \\
 \qbin{N}{k}{q} - \frac{\tau_q(N-k,A)}{\prod_{i=0}^{N-k-1} (q^{N-k}-q^i)} - \frac{\tau_q(N-k,B)}{\prod_{i=0}^{N-k-1} (q^{N-k}-q^i)} + \frac{\tau_q(N-k,A \cup B)}{\prod_{i=0}^{N-k-1} (q^{N-k}-q^i)}.
\end{multline}
Using Lemma~\ref{lem:prodfunc} we can rewrite the last term of this expression as
\begin{align*}
    \frac{\tau_q(N-k,A \cup B)}{\prod_{i=0}^{N-k-1} (q^{N-k}-q^i)} = \frac{\tau_q(N-k,A)\tau_q(N-k,B)}{\tau_q(N-k,A \cap B)\prod_{i=0}^{N-k-1}(q^{N-k}-q^i)}.
\end{align*}
Finally, by the second part of Theorem~\ref{thm:CR} we have that
\begin{multline*}
    |\{W \le X \mid \dim(W)=k, \, 
W \triangleright A  \mbox{ and } W \triangleright B
\}| = \\
 \qbin{N}{k}{q}  - \frac{2q^{k(N-k)}\prod_{i=0}^{N-k-1} (q^{N-k}-q^i)}{\prod_{i=0}^{N-k-1} (q^{N-k}-q^i)} +  \frac{q^{2k(N-k)}\prod_{i=0}^{N-k-1} (q^{N-k}-q^i)^2}{q^{(N-\ell)(N-k)}\prod_{i=0}^{\ell-1}(q^{N-k}-q^i)\prod_{i=0}^{N-k-1} (q^{N-k}-q^i)},
\end{multline*}
which simplifies to $\nu_q(N,k,\ell)$.
Note moreover that this expression does not depend on the choice of $A$ and $B$, concluding the proof.
\end{proof}

\bigskip

\section{Upper and Lower Bounds} \label{sec:bounds}

In this section we present some of the main results of this paper, providing an answer to Problems~\ref{pb:a} and~\ref{pb:c}. The approach we take
is based on the study of isolated vertices in bipartite graphs. Throughout the paper we use 
the following definition of bipartite graph and isolated vertex.

\begin{definition}
A (\textbf{directed}) \textbf{bipartite graph} is a 3-tuple $\mB=(\mV,\mW,\mE)$, where $\mV$, $\mW$ are finite non-empty sets and $\mE \subseteq \mV \times \mW$. The elements of $\mV \cup \mW$ are called \textbf{vertices}. We say that a vertex~$W \in \mW$ is
\textbf{isolated} if there is no $V \in \mV$ with $(V,W) \in \mE$.

Finally, a bipartite graph $\mB=(\mV,\mW,\mE)$ is \textbf{left-regular} of \textbf{degree} $\partial$ if for all $V \in \mV$ we have
$\partial=|\{W \in \mW \mid (V,W) \in \mE\}|$.
\end{definition}

We start with a very simple upper bound for the number of non-isolated vertices in a left-regular bipartite graph.

\begin{lemma} \label{lem:lowerbound}
Let $\mB=(\mV,\mW,\mE)$ be a bipartite and left-regular graph of degree $\partial>0$.
Let $\mF \subseteq \mW$ be the collection of non-isolated vertices of $\mW$.
We have
$$|\mF| \le |\mV| \, \partial.$$
\end{lemma}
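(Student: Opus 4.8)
The plan is to count edges of $\mB$ in two ways, exploiting left-regularity. First I would observe that since $\mB$ is left-regular of degree $\partial$, the total number of edges is exactly
\[
|\mE| = \sum_{V \in \mV} |\{W \in \mW \mid (V,W) \in \mE\}| = |\mV|\,\partial.
\]
On the other hand, I would bound $|\mE|$ from below by restricting the sum over vertices on the right to those that are non-isolated: for each $W \in \mF$ there is, by definition of non-isolated, at least one $V \in \mV$ with $(V,W) \in \mE$, so the number of edges incident to $W$ is at least $1$. Summing over $W \in \mF$ and noting that distinct right-vertices have disjoint sets of incident edges gives $|\mE| \ge \sum_{W \in \mF} 1 = |\mF|$.

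Combining the two counts yields $|\mF| \le |\mE| = |\mV|\,\partial$, which is exactly the claimed inequality. The only mild subtlety worth spelling out is that the edge set $\mE \subseteq \mV \times \mW$ is a set, so the edges incident to two distinct right-vertices $W \ne W'$ are genuinely disjoint as subsets of $\mE$ (an edge $(V,W)$ determines $W$), which is what licenses the lower bound $\sum_{W \in \mF} |\{V : (V,W) \in \mE\}| \le |\mE|$.

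Honestly there is no real obstacle here: the statement is a one-line double-counting argument, and the hypothesis $\partial > 0$ is not even needed for the inequality itself (it will presumably matter for later quantitative refinements). I would write it as a short direct proof, perhaps phrasing it as: each non-isolated $W$ "uses up" at least one edge, there are only $|\mV|\,\partial$ edges in total, hence at most $|\mV|\,\partial$ non-isolated vertices.

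\begin{proof}
Since $\mB$ is left-regular of degree $\partial$, summing the out-degrees over the left-vertices gives
\[
|\mE| = \sum_{V \in \mV} |\{W \in \mW \mid (V,W) \in \mE\}| = |\mV| \, \partial.
\]
On the other hand, for each $W \in \mF$ the set $\mE_W := \{V \in \mV \mid (V,W) \in \mE\}$ is non-empty by the definition of a non-isolated vertex, and the map $(V,W) \mapsto W$ shows that the sets $\{(V,W) \mid V \in \mE_W\}$ for $W \in \mF$ are pairwise disjoint subsets of $\mE$. Hence
\[
|\mF| \le \sum_{W \in \mF} |\mE_W| \le |\mE| = |\mV| \, \partial,
\]
which is the claimed bound.
\end{proof}
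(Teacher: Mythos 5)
Your proof is correct and is essentially the same double-counting argument as in the paper: the paper also counts the pairs $(V,W)\in\mE$ with $W\in\mF$ in two ways, getting $|\mV|\,\partial=\sum_{W\in\mF}|\{V\mid (V,W)\in\mE\}|\ge|\mF|$. Your observation that every edge has a non-isolated right endpoint (so summing only over $\mF$ already exhausts $\mE$) is exactly the point implicitly used there, and your remark that $\partial>0$ is not needed for this inequality is accurate.
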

\begin{proof}
We count the elements in the set $\mathbf{A}=\{(V,W) \in \mE \mid V \in \mV, \, W \in \mF\}$ in two ways, obtaining
$$|\mV| \, \partial = |\mathbf{A}| = \sum_{W \in \mF} |\{V \in \mV \mid (V,W) \in \mE\}| \ge |\mF|.$$
The latter inequality follows from the fact that, by assumption,
no vertex in $\mF$ is isolated.
\end{proof}

The next step is to derive a lower bound for the number of non-isolated vertices in a bipartite graph.
We concentrate on a class of such graphs that exhibit strong regularity properties with respect to certain maps defined on their left-vertices.
More precisely, we will use the following concepts.

\begin{definition} \label{def:assoc}
Let $\mV$ be a finite non-empty set and let $r \ge 0$ be an integer. An \textbf{association} on $\mV$ of \textbf{magnitude} $r$ is a function 
$\alpha: \mV \times \mV \to \{0,...,r\}$ that satisfies the following:
\begin{enumerate}[label={(\arabic*)}]
\item $\alpha(V,V)=r$ for all $V \in \mV$;
    \item $\alpha(V,V')=\alpha(V',V)$ for all $V,V' \in \mV$.
    \end{enumerate}
\end{definition}

\begin{definition} \label{def:areg}
Let $\mB=(\mV,\mW,\mE)$ be a finite bipartite graph and let $\alpha$ be an association on~$\mV$ of magnitude $r$.  We say that $\mB$ is \textbf{$\alpha$-regular} if for all  $(V,V') \in \mV \times \mV$ the number of vertices $W \in \mW$ with $(V,W) \in \mE$ and 
$(V',W) \in \mE$ only depends on $\alpha(V,V')$. We denote this number by~$\mW_\ell(\alpha)$, where $\ell=\alpha(V,V') \in \{0,...,r\}$, i.e. we have
\begin{align*}
    \mW_\ell(\alpha) = |\{W \in \mW \mid (V,W) \in \mE, (V',W) \in \mE\}|,
\end{align*}
for any pair $(V,V') \in \mV \times \mV$ such that $\alpha(V,V')=\ell$.
\end{definition}

Note that an $\alpha$-regular bipartite graph $\mB$ as in Definition~\ref{def:areg} is necessarily left-regular of degree $\partial=\mW_r(\alpha)$.
The following lemma gives a lower bound for the number of non-isolated vertices.

\begin{lemma} \label{lem:upperbound}
Let $\mB=(\mV,\mW,\mE)$ be a finite bipartite $\alpha$-regular graph, where $\alpha$ is an association on~$\mV$ of magnitude $r$. Let $\mF \subseteq \mW$ be the collection of non-isolated vertices of $\mW$. If
$\mW_r(\alpha)>0$, then
$$|\mF| \ge  \frac{\mW_r(\alpha)^2 \, |\mV|^2}{\sum_{\ell=0}^r  \mW_\ell(\alpha) \, |\alpha^{-1}(\ell)|}.$$
\end{lemma}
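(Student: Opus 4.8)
The natural approach is a second-moment / Cauchy–Schwarz argument, counting incidences between left-vertices and non-isolated right-vertices. Let me set up the relevant quantities. For each $W \in \mW$ write $\deg(W) = |\{V \in \mV \mid (V,W) \in \mE\}|$ for its in-degree. Counting edges two ways, and using left-regularity of degree $\partial = \mW_r(\alpha)$, gives $\sum_{W \in \mW} \deg(W) = |\mV|\, \mW_r(\alpha)$; since isolated vertices contribute $0$, in fact $\sum_{W \in \mF} \deg(W) = |\mV|\, \mW_r(\alpha)$. Next, count "cherries" (pairs of edges sharing a right-vertex): $\sum_{W \in \mW}\deg(W)^2 = |\{(V,V',W) \mid (V,W),(V',W) \in \mE\}| = \sum_{(V,V') \in \mV \times \mV} \mW_{\alpha(V,V')}(\alpha) = \sum_{\ell=0}^r \mW_\ell(\alpha)\,|\alpha^{-1}(\ell)|$, where the middle equality uses $\alpha$-regularity. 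Again isolated vertices contribute nothing, so $\sum_{W \in \mF}\deg(W)^2$ equals the same sum.

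The key step is then Cauchy–Schwarz applied over the index set $\mF$:
\[
\Bigl(\sum_{W \in \mF} \deg(W)\Bigr)^2 \le |\mF| \cdot \sum_{W \in \mF}\deg(W)^2.
\]
Substituting the two identities above gives
\[
\bigl(|\mV|\,\mW_r(\alpha)\bigr)^2 \le |\mF| \cdot \sum_{\ell=0}^r \mW_\ell(\alpha)\,|\alpha^{-1}(\ell)|,
\]
and rearranging yields the claimed bound, provided the denominator is nonzero — which is guaranteed since $\mW_r(\alpha) > 0$ forces $|\alpha^{-1}(r)| \ge |\mV| > 0$ (the diagonal lies in $\alpha^{-1}(r)$ by the first axiom of an association) and hence that term alone is positive.

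**Main obstacle.** There is no serious obstacle; the argument is a textbook moment computation. The only points requiring a little care are: (i) verifying that the "cherry count" $\sum_{(V,V')}\mW_{\alpha(V,V')}(\alpha)$ is legitimately rewritten as $\sum_\ell \mW_\ell(\alpha)\,|\alpha^{-1}(\ell)|$ — this is just partitioning $\mV \times \mV$ according to the value of $\alpha$, using that $\mB$ is $\alpha$-regular so $\mW_\ell(\alpha)$ is well-defined; (ii) being sure that restricting both sums from $\mW$ to $\mF$ is valid, which holds because every omitted term has $\deg(W) = 0$; and (iii) checking the denominator is positive so that the final division is justified, as noted above. I would also remark (though it is not needed) that equality in Cauchy–Schwarz corresponds to all non-isolated right-vertices having equal in-degree, which explains when the bound is tight.
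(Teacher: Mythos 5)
Your argument is correct and is essentially the paper's own proof: both count edges and "cherries" (triples $(V,V',W)$ with both pairs in $\mE$), rewrite the cherry count via $\alpha$-regularity as $\sum_\ell \mW_\ell(\alpha)\,|\alpha^{-1}(\ell)|$, and apply Cauchy--Schwarz over the non-isolated vertices $\mF$. The only cosmetic difference is in justifying that the denominator is nonzero (you use the diagonal of the association, the paper deduces $|\mathbf{A}|\neq 0$ from the inequality itself), which is immaterial.
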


\begin{proof}
Define the set $\mathbf{A}=\{(V,V',W) \in \mV^2 \times \mW \mid (V,W) \in \mE, \, (V',W) \in \mE\}$.
Since all vertices in $\mW \setminus \mF$ are isolated, we have
\begin{equation}\label{ntt1}
    |\mF| \cdot |\mathbf{A}| = |\mF| \, \sum_{W \in \mF} |\{V \in \mV \mid (V,W) \in \mE\}|^2 \ge 
\left( \sum_{W \in \mF} |\{V \in \mV \mid (V,W) \in \mE\}| \right)^2,
\end{equation}
where the latter bound follows from 
the Cauchy-Schwarz Inequality.
As in the proof of Lemma~\ref{lem:lowerbound}, we have
\begin{equation}\label{ntt2}
   \sum_{W \in \mF} |\{V \in \mV \mid (V,W) \in \mE\}| = \sum_{V \in \mV} |\{W \in \mW \mid (V,W) \in \mE\}|
   = \mW_r(\alpha) \, |\mV|.
\end{equation}
Therefore combining \eqref{ntt1} with \eqref{ntt2} we obtain
\begin{equation} \label{ntt3}
    |\mF| \cdot |\mathbf{A}| \ge \mW_r(\alpha)^2 \, |\mV|^2.
\end{equation}
Observe moreover that, by the definition of an association,
\begin{align} \label{ntt4}
    |\mathbf{A}| &=  \sum_{\ell=0}^r \; \sum_{\substack{(V,V') \in \mV^2 \\ \alpha(V,V')=\ell}} |\{W \in \mW \mid (V,W) \in \mE, \, (V',W) \in \mE\}| \nonumber \\
    &= \sum_{\ell=0}^r \mW_\ell(\alpha) \cdot 
    |\{(V,V') \in \mV^2 \mid \alpha(V,V')=\ell\}| \nonumber \\
    &= \sum_{\ell=0}^r  \mW_\ell(\alpha) \, |\alpha^{-1}(\ell)|.
\end{align}
Since $\mW_r(\alpha)>0$, by~\eqref{ntt3}
we have $|\mathbf{A}| \neq 0$. Therefore to conclude the proof it suffices to combine~\eqref{ntt3} with~\eqref{ntt4}.
\end{proof}

We now apply Lemma~\ref{lem:lowerbound} and Lemma~\ref{lem:upperbound}
to derive upper and lower bounds for the number of common complements of a collection of subspaces. This will provide an answer to Problem~\ref{pb:a}, establishing the main result of this section. 

\begin{theorem} \label{thm:boundscc}
Let $X$ be a vector space of finite dimension $N \ge 3$ over $\F_q$ and let $1 \le k \le N-1$ be an integer.
Let $\mA$ be a non-empty collection of subspaces of $X$, all of which have codimension~$k$.
Let $\mF$ be the collection of $k$-spaces $W \le X$ that are not common complements of the spaces in~$\mA$. We have
$$\frac{\nu_q(N,k,N-k)^2 \, |\mA|^2}{\sum_{\ell=0}^{N-k}  \nu_q(N,k,\ell) \cdot |\{(A,A') \in \mA^2 \mid \dim(A\cap A')=\ell\}|}
\le |\mF| \le |\mA| \, \nu_q(N,k,N-k).
$$
In particular, if $|\mA| \ge 2$ and 
$$\ellmax := \max\{\dim(A \cap A') \mid A, A' \in \mA, \, A \neq A'\},$$ then
$$\frac{\nu_q(N,k,N-k)^2\,|\mA|}{\nu_q(N,k,N-k) + (|\mA|-1)\, \nu_q(N,k,\ellmax)} \le |\mF| \le |\mA| \, \nu_q(N,k,N-k).$$
\end{theorem}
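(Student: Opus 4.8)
The plan is to set up a bipartite graph whose left-vertices are the spaces in $\mA$, whose right-vertices are the $k$-subspaces of $X$, and whose edges encode the relation ``$W$ intersects $A$ non-trivially''. Formally, put $\mV = \mA$, let $\mW$ be the set of all $k$-subspaces of $X$, and let $(A,W) \in \mE$ exactly when $W \triangleright A$ (equivalently, $W$ is \emph{not} a complement of $A$, since $\dim W + \dim A = N$ forces $W \oplus A = X$ iff $W \cap A = \{0\}$). With this choice, the isolated right-vertices are precisely the common complements of the spaces in $\mA$, so the collection $\mF$ of non-isolated right-vertices is exactly the collection of $k$-spaces that fail to be a common complement — matching the $\mF$ in the statement.

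Next I would verify the regularity hypotheses needed for Lemmas~\ref{lem:lowerbound} and~\ref{lem:upperbound}. Define $\alpha : \mV \times \mV \to \{0,\dots,N-k\}$ by $\alpha(A,A') = \dim(A \cap A')$; this is symmetric and satisfies $\alpha(A,A) = \dim A = N-k$, so it is an association of magnitude $r = N-k$. By Theorem~\ref{thm:nu}, for any two $(N-k)$-spaces $A, A'$ with $\dim(A\cap A') = \ell$ the number of $k$-spaces $W$ with $W \triangleright A$ and $W \triangleright A'$ equals $\nu_q(N,k,\ell)$, which depends only on $\ell = \alpha(A,A')$. Hence $\mB$ is $\alpha$-regular with $\mW_\ell(\alpha) = \nu_q(N,k,\ell)$; in particular (taking $A = A'$, $\ell = N-k$) it is left-regular of degree $\partial = \mW_r(\alpha) = \nu_q(N,k,N-k)$, which counts the $k$-spaces meeting a single fixed $(N-k)$-space non-trivially. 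One should note $\nu_q(N,k,N-k) > 0$: indeed it equals the number of $k$-subspaces that intersect a fixed $(N-k)$-space non-trivially, and since $k + (N-k) = N \ge 3 > \dim X$... (more precisely, since $k \ge 1$ and $N-k \ge 1$, generic position is the exceptional case and there is always at least one $k$-space meeting a fixed nonzero subspace), so the positivity hypothesis of Lemma~\ref{lem:upperbound} holds.

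Now the two-sided bound follows by directly quoting the lemmas. Lemma~\ref{lem:lowerbound} gives $|\mF| \le |\mV|\,\partial = |\mA|\,\nu_q(N,k,N-k)$, the upper bound. Lemma~\ref{lem:upperbound} gives
\[
|\mF| \ge \frac{\mW_r(\alpha)^2 \, |\mV|^2}{\sum_{\ell=0}^{r} \mW_\ell(\alpha)\,|\alpha^{-1}(\ell)|}
= \frac{\nu_q(N,k,N-k)^2 \, |\mA|^2}{\sum_{\ell=0}^{N-k} \nu_q(N,k,\ell)\cdot |\{(A,A') \in \mA^2 \mid \dim(A\cap A')=\ell\}|},
\]
which is the lower bound as stated, since $\alpha^{-1}(\ell) = \{(A,A')\in\mA^2 \mid \dim(A\cap A') = \ell\}$.

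Finally, for the ``in particular'' clause, assume $|\mA| \ge 2$ and set $\ellmax = \max\{\dim(A\cap A') \mid A,A'\in\mA,\ A\ne A'\}$. I would bound the denominator of the lower bound from above. Split the sum over $\ell$ according to whether a pair is diagonal or not: the $\ell = N-k$ term contributes only the $|\mA|$ diagonal pairs $(A,A)$ (no off-diagonal pair can have intersection dimension $N-k$ since that forces $A = A'$), giving $\nu_q(N,k,N-k)\cdot|\mA|$; all remaining $|\mA|^2 - |\mA|$ off-diagonal pairs have $\dim(A\cap A') \le \ellmax$, and — here I would invoke monotonicity of $\nu_q(N,k,\cdot)$ in $\ell$, namely $\nu_q(N,k,\ell) \le \nu_q(N,k,\ellmax)$ for $\ell \le \ellmax$ (a fixed pair with larger intersection is met by more $k$-spaces) — they contribute at most $(|\mA|^2 - |\mA|)\,\nu_q(N,k,\ellmax)$. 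Thus the denominator is at most $|\mA|\bigl(\nu_q(N,k,N-k) + (|\mA|-1)\,\nu_q(N,k,\ellmax)\bigr)$, and dividing yields
\[
|\mF| \ge \frac{\nu_q(N,k,N-k)^2\,|\mA|}{\nu_q(N,k,N-k) + (|\mA|-1)\,\nu_q(N,k,\ellmax)}.
\]
The main obstacle I anticipate is not the graph bookkeeping but justifying the monotonicity $\nu_q(N,k,\ell) \le \nu_q(N,k,\ell')$ for $\ell \le \ell'$ cleanly — either by a direct combinatorial/containment argument (a $k$-space missing a bigger subspace also misses a smaller one, so ``intersecting'' is easier for the bigger one) or by manipulating the closed form in Notation~\ref{notat:nu}; the combinatorial route is cleaner and is what I would write up. (One should also double-check the edge cases $\ellmax$ potentially equal to $0$, where $\nu_q(N,k,0)$ still makes sense in the range $N-2k \le 0$, i.e. $N \le 2k$; if $N > 2k$ then $\ell \ge N-2k > 0$ automatically and the sum's index range is exactly $N-2k \le \ell \le N-k$, consistent with Notation~\ref{notat:nu}.)
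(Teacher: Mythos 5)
Your proposal is correct and follows essentially the same route as the paper: the same bipartite graph $(\mA,\mW,\mE)$ with the intersection-dimension association, Theorem~\ref{thm:nu} for $\alpha$-regularity with $\mW_\ell(\alpha)=\nu_q(N,k,\ell)$, Lemmas~\ref{lem:lowerbound} and~\ref{lem:upperbound} for the two bounds, and the same diagonal/off-diagonal split of $\sum_\ell \nu_q(N,k,\ell)\,|\alpha^{-1}(\ell)|$ for the ``in particular'' clause. The one caveat is the step you yourself flag: the ``containment'' heuristic for monotonicity of $\ell\mapsto\nu_q(N,k,\ell)$ does not literally work, since you are comparing two pairs of $(N-k)$-spaces with different intersection dimensions (no nesting of $A\cup B$ inside $A'\cup B'$ is available; a subspace contained in a union of two subspaces lies in one of them), so you should instead argue from the closed form in Notation~\ref{notat:nu} — the $\ell$-dependent part is $q^{(2k-N+\ell)(N-k)}\prod_{i=\ell}^{N-k-1}(q^{N-k}-q^i)$, whose ratio at consecutive values of $\ell$ is $q^{N-k}/(q^{N-k}-q^{\ell})>1$ — which is exactly how the paper justifies it (via Equation~\eqref{pf:nuq1} in the Appendix). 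Also, the clause ``$N\ge 3>\dim X$'' in your positivity check is a slip, but your corrected justification (extend a line of $A$ to a $k$-space) is fine.
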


\begin{proof}
We apply Lemmas~\ref{lem:lowerbound} and~\ref{lem:upperbound} to the bipartite graph $\mB=(\mA,\mW,\mE)$, 
where~$\mW$ is the collection of $k$-subspaces of $X$ and $(A,W) \in \mE$ if $W$ intersects $A$. We define
an association~$\alpha$ of magnitude $N-k$ on $\mA$ by setting $\alpha(A,A'):=\dim(A \cap A')$ for all $A,\,A' \in \mA$. By Theorem~\ref{thm:nu}, the graph $\mB$ is $\alpha$-regular with 
$\mW_{\ell}(\alpha)=\nu_q(N,k,\ell)$
for all $\ell \in \{0,...,N-k\}$. Note that $\mW_{N-k}(\alpha)=\nu_q(N,k,N-k) > 0$, since every subspace has a complement.
The desired upper and lower bounds on $|\mF|$ now follow directly from   Lemmas~\ref{lem:lowerbound} and~\ref{lem:upperbound}.  

To prove the last part of the statement, observe that the map $\ell \mapsto \nu_q(N,k,\ell)$ is increasing in $\ell$. This can be seen, for example, from Equation~\eqref{pf:nuq1} in the Appendix. Therefore \begin{align*}
\sum_{\ell=0}^{N-k}  \nu_q(N,k,\ell) \, |\alpha^{-1}(\ell)| &\le  \sum_{\ell=0}^{N-k-1}  \nu_q(N,k,\ellmax) \, |\alpha^{-1}(\ell)| + |\mA| \, \nu_q(N,k,N-k) \\
&= \nu_q(N,k,\ellmax) \sum_{\ell=0}^{N-k-1} |\alpha^{-1}(\ell)| +  |\mA| \, \nu_q(N,k,N-k)\\ 
&= \nu_q(N,k,\ellmax) \, |\mA|\, (|\mA|-1) + |\mA|\, \nu_q(N,k,N-k).
\end{align*}
Combining this with Lemma~\ref{lem:upperbound} we obtain
\begin{equation*}
    |\mF| \ge \frac{\nu_q(N,k,N-k)^2\,|\mA|^2 }{|\mA|\, \nu_q(N,k,N-k) + |\mA| \, (|\mA|-1) \,  \nu_q(N,k,\ellmax)}, 
\end{equation*}
which is the lower bound in the second part of the statement.
\end{proof}

Note that the lower bound on $|\mF|$ in Theorem~\ref{thm:boundscc} takes into account the number of spaces to be complemented, but also their ``intersection structure''. More precisely,
it takes into account how many subspace pairs intersect in a given dimension.
The latter information will be crucial when deriving upper bounds on the density function of MRD codes in Section~\ref{sec:rankmetric}; see in particular Theorem~\ref{thm:mrdboundcc}.

\begin{remark}
A lower bound for the number of common complements of a collection of subspaces was  obtained in~\cite[Theorem 5]{tingley1991complements}. Following the notation and the assumptions of Theorem~\ref{thm:boundscc}, the result of~\cite{tingley1991complements} states that if
$|\mA| \le q$, then the number of common complements of the spaces in~$\mA$ is at least $q+1-|\mA|$. For $|\mA|=q$, the lower bound of~\cite{tingley1991complements} is 1, whereas the bound given in Theorem~\ref{thm:boundscc} is negative. Therefore, for this particular case, the result of~\cite{tingley1991complements} is sharper. For $|\mA| < q$ and sufficiently large $q$, it is possible to check that the bound following from Theorem~\ref{thm:boundscc} is at least as good as the one in~\cite{tingley1991complements}.

\end{remark}

We conclude this section with an upper and lower bound for the number of spaces of a given dimension that distinguish a given cone. This provides an answer to Problem~\ref{pb:c}.

\begin{theorem} \label{thm:boundsv}
Let $X$ be a vector space of finite dimension $N \ge 3$ over $\F_q$ and let $K \subseteq X$ be a cone with $|K| \ge q$. Let
$1 \le k \le N-1$ be an integer.
Denote by $\mF$ 
 the collection of $k$-subspaces $W \le X$ 
that intersect $K$. We have
$$\displaystyle\frac{\displaystyle\frac{|K|-1}{q-1}\, \qbin{N-1}{k-1}{q}}{1+\displaystyle\left(\frac{|K|-1}{q-1}-1\right)\displaystyle\left(\frac{q^{k-1}-1}{q^{N-1}-1}\right)} \le |\mF| \le \displaystyle\frac{|K|-1}{q-1} \, \qbin{N-1}{k-1}{q}.$$
\end{theorem}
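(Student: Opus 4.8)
The plan is to apply Lemmas~\ref{lem:lowerbound} and~\ref{lem:upperbound} to a suitable bipartite graph, exactly as in the proof of Theorem~\ref{thm:boundscc}, but now with \emph{singleton left-vertices} coming from the cone $K$. Concretely, I would let $\mathcal{P}$ be the set of $1$-subspaces $\langle v \rangle \le X$ that are contained in $K$; since $0 \in K$ and $K$ is a cone, each such line lies entirely in $K$, and $K \setminus \{0\}$ is partitioned into these lines, so $|\mathcal{P}| = (|K|-1)/(q-1)$. A $k$-space $W$ intersects $K$ (i.e.\ $W \triangleright K$) if and only if $W$ contains at least one line of $\mathcal{P}$. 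So I would take $\mathcal{B} = (\mathcal{P}, \mathcal{W}, \mathcal{E})$ where $\mathcal{W}$ is the set of $k$-subspaces of $X$ and $(\langle v\rangle, W) \in \mathcal{E}$ iff $\langle v\rangle \le W$; then the non-isolated right-vertices are precisely the members of $\mathcal{F}$.

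First I would check the regularity parameters. The graph is left-regular of degree $\partial = \qbin{N-1}{k-1}{q}$, the number of $k$-spaces through a fixed line. For the $\alpha$-regularity needed by Lemma~\ref{lem:upperbound}, I would use an association $\alpha$ of magnitude $1$ on $\mathcal{P}$: set $\alpha(\langle v\rangle, \langle v\rangle) = 1$ and $\alpha(\langle v\rangle, \langle w\rangle) = 0$ for distinct lines (all distinct pairs of lines span a plane, so the relevant count depends only on whether the lines coincide). Then $\mathcal{W}_1(\alpha) = \qbin{N-1}{k-1}{q}$ as above, and $\mathcal{W}_0(\alpha)$ is the number of $k$-spaces containing two fixed distinct lines, equivalently containing a fixed plane, namely $\qbin{N-2}{k-2}{q}$. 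The fibre sizes are $|\alpha^{-1}(1)| = |\mathcal{P}|$ and $|\alpha^{-1}(0)| = |\mathcal{P}|(|\mathcal{P}|-1)$.

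Next I would plug these into the two lemmas. Lemma~\ref{lem:lowerbound} gives immediately $|\mathcal{F}| \le |\mathcal{P}|\,\partial = \frac{|K|-1}{q-1}\qbin{N-1}{k-1}{q}$, which is the claimed upper bound. Lemma~\ref{lem:upperbound} (applicable since $\mathcal{W}_1(\alpha) > 0$, and since $|K| \ge q$ guarantees $|\mathcal{P}| \ge 1$, indeed $\ge 2$ for the bound to be interesting) gives
\begin{equation*}
|\mathcal{F}| \ge \frac{\mathcal{W}_1(\alpha)^2\,|\mathcal{P}|^2}{\mathcal{W}_1(\alpha)\,|\mathcal{P}| + \mathcal{W}_0(\alpha)\,|\mathcal{P}|(|\mathcal{P}|-1)}
= \frac{\qbin{N-1}{k-1}{q}^2\,|\mathcal{P}|}{\qbin{N-1}{k-1}{q} + \qbin{N-2}{k-2}{q}(|\mathcal{P}|-1)}.
\end{equation*}
The remaining work is purely algebraic: substitute $|\mathcal{P}| = (|K|-1)/(q-1)$ and simplify using $\qbin{N-2}{k-2}{q}/\qbin{N-1}{k-1}{q} = (q^{k-1}-1)/(q^{N-1}-1)$ to match the stated form. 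The main obstacle, such as it is, is verifying the $\alpha$-regularity cleanly — i.e.\ confirming that the count of $k$-spaces through two distinct lines genuinely depends only on the (always planar) span and equals $\qbin{N-2}{k-2}{q}$ — and then carrying out the fraction manipulation carefully; both are routine. One should also note the edge case $k=1$, where $\qbin{N-2}{k-2}{q}$ involves a formally negative lower index and should be read as $0$ (consistent with two distinct lines never lying in a common $1$-space), in which case the lower bound collapses to $|\mathcal{P}| = |\mathcal{F}|$, as it must.
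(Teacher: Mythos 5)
Your proposal is correct and follows essentially the same route as the paper: the paper's proof applies Lemmas~\ref{lem:lowerbound} and~\ref{lem:upperbound} to exactly this bipartite graph (left vertices the $1$-subspaces spanned by nonzero elements of $K$, right vertices the $k$-subspaces, edges given by containment), with the same magnitude-$1$ association, the same parameters $\mW_0=\qbin{N-2}{k-2}{q}$, $\mW_1=\qbin{N-1}{k-1}{q}$, $|\alpha^{-1}(0)|=|\mV|(|\mV|-1)$, $|\alpha^{-1}(1)|=|\mV|$, and the same final substitution $|\mV|=(|K|-1)/(q-1)$ and ratio simplification. Your extra remarks on verifying $\alpha$-regularity and on the $k=1$ edge case are fine and do not change the argument.
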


\begin{proof}
This time we apply Lemmas~\ref{lem:lowerbound} and~\ref{lem:upperbound} to the bipartite graph $\mB=(\mV,\mW,\mE)$ defined as follows: $\mV$ is the collection of $1$-subspaces generated by the nonzero elements of $K$, $\mW$ is the collection of $k$-subspaces of $X$, and 
$(L,W) \in \mE$ if $L \le W$. We further define an association~$\alpha$ on $\mV$ by setting $\alpha(V,V'):=\dim(V \cap V')$ for all $V,\,V' \in \mV$. It is easy to see that $\alpha$ has magnitude 1 and that the graph~$\mB$ is $\alpha$-regular. Moreover, 
$$\mW_0=\qbin{N-2}{k-2}{q}, \quad \mW_1=\qbin{N-1}{k-1}{q},
\quad |\alpha^{-1}(0)|=|\mV|\, (|\mV|-1), \quad |\alpha^{-1}(1)|=|\mV|.$$
The upper bound on $|\mF|$ is
an immediate consequence of Lemma~\ref{lem:lowerbound} and the fact that~$\mB$ is left-regular of degree~$\mW_1$, as observed right after Definition~\ref{def:areg}. Furthermore, by applying Lemma~\ref{lem:upperbound} we get
$$|\mF| \ge  \displaystyle\frac{|\mV|\, \qbin{N-1}{k-1}{q}^2}{\qbin{N-1}{k-1}{q}+\left(|\mV|-1\right)\, \qbin{N-2}{k-2}{q}}.$$
The lower bound in the statement can easily be obtained from this inequality using the fact that $|\mV| =(|K|-1)/(q-1)$,
along with the definition of the $q$-binomial coefficient and some straightforward computations.
\end{proof}

\begin{remark}\label{rmk:sperner}
While drafting this paper, we found that the lower bound on $|\mF|$ in Theorem~\ref{thm:boundsv} can also be derived from 
a known result in {Sperner theory}.
More precisely, lengthy computations show that
the lower bound of Theorem~\ref{thm:boundsv} coincides with that
of~\cite[Lemma~12]{bey2004polynomial} for $\ell=1$ (and the same value of~$k$). This result is used in~\cite{bey2004polynomial} towards the derivation of a \textit{polynomial LYM inequality} for the linear lattice and is more general than our Theorem~\ref{thm:boundsv}.
Although both proofs partially rely on the Cauchy-Schwarz Inequality,
our argument has a more 
``enumerative'' flavor thanks 
to the concept of an association. This  allows us to avoid the eigenvalue machinery in the proof of~\cite{bey2004polynomial}.
In this paper, the best bounds are 
 obtained by 
applying Theorem~\ref{thm:boundscc} (rather than Theorem~\ref{thm:boundsv}),
which is instead not related to the problems studied in~\cite{bey2004polynomial}.
\end{remark}

\begin{remark} \label{rmk:a_vs_d2}
We continue the discussion 
started in Remark~\ref{rmk:a_vs_d1}.
Even though the union of linear subspaces is a cone, Theorem~\ref{thm:boundscc} 
and Theorem~\ref{thm:boundsv} have different applicability.
 Theorem~\ref{thm:boundscc} can be used when information about the intersection structure of the subspaces to be complemented (along with their number) is known,
 without requiring any particular knowledge about the cardinality of their union. Vice versa, Theorem~\ref{thm:boundsv} can be used to give an answer to Problem~\ref{pb:a} when 
 the size of $\bigcup \mA$ is known.

  The following example illustrates two situations in which the information needed to compute the lower bounds in  Theorems~\ref{thm:boundscc} 
and~\ref{thm:boundsv} is completely available, showing that Theorem~\ref{thm:boundscc} provides a sharper bound in both scenarios. This will be the case also when estimating the density function of MRD codes in Sections~\ref{sec:rankmetric} and~\ref{sec:m}.
\end{remark}

\begin{example}\label{exe:a_vs_d} 
\begin{enumerate}
    \item Let $X$ be a vector space of dimension $5$ over $\F_2$. Select a subspace $X' \le X$ of dimension $4$ and let $\mA$ be a 2-spread of $X'$; see~\cite[Chapter~4]{hirschfeld1998projective}. Then any two (distinct) elements of $\mA$ intersect in $\{0\}$ and $|\mA|=(2^4-1)/(2^2-1)=5$. Denote by $\mF$ the family of $3$-subspaces of $X$ that intersect at least one element of $\mA$. The lower bound of Theorem~\ref{thm:boundscc} reads $|\mF| \ge 141$ and the one of Theorem~\ref{thm:boundsv} reads instead $|\mF| \ge 139$. Since $\bigcup \mA$ is a 4-dimensional subspace of $X$, all 155 subspaces of~$X$ of dimension 3 intersect some element of $\mA$. 
    
    \item Let $X$ be a vector space of dimension~$5$ over $\F_2$. Select a subspace $X' \le X$ of dimension~$3$ and let $\mA$ be the collection of 2-subspaces of $X'$.
    We have  $|\mA|=\qbin{3}{2}{2}=7$. Denote by $\mF$ the family of $3$-subspaces of $X$ that intersect at least one element of $\mA$. The lower bound of Theorem~\ref{thm:boundscc} reads $|\mF| \ge 131$, while that of Theorem~\ref{thm:boundsv} reads $|\mF| \ge 112$ and is therefore coarser. Again, all 155 subspaces of $X$ of dimension 3 intersect at least one element of $\mA$.
\end{enumerate}
\end{example}

\bigskip

\section{Asymptotic Results} \label{sec:asy}

This section is entirely devoted to
the asymptotic versions of Theorems~\ref{thm:boundscc} and~\ref{thm:boundsv}.
These will be stated in the following
language.

\begin{notation}
We use the Bachmann-Landau notation (``Big O'', ``Little O'', and~``$\sim$'') to describe the asymptotic growth of real-valued functions defined on an infinite set of natural numbers; see e.g.~\cite{de1981asymptotic}.  We also denote by $Q$ the set of prime powers and omit ``$q \in Q$'' when writing $q \to +\infty$.
\end{notation}

In the remainder of the paper we will repeatedly need the asymptotic estimate 
for the $q$-binomial coefficient as $q$ grows, i.e.,
\begin{equation} \label{eq:qbin}
\qbin{a}{b}{q} \sim q^{b(a-b)} \quad \mbox{as $q \to +\infty$}
\end{equation}
for all integers $a \ge b \ge 0$.
In the sequel we will apply this well-known fact without explicitly
referring to it.

For convenience of exposition and to simplify arguments in the sequel, we start by establishing the asymptotic
version of Theorem~\ref{thm:boundsv}.

\begin{theorem} \label{thm:asyv}
Let $(X_q)_{q \in Q}$ be a sequence of vector spaces, all of which have the same dimension $N \ge 3$ over $\F_q$. Let $(K_q)_{q \in Q}$ be a sequence of cones with $K_q \subseteq X_q$ and $|K_q| \ge q$ for all $q \in Q$, and let $1 \le k \le N-1$ be an integer. For $q \in Q$, denote by $\mF_q$ and $\mF_q'$
the collections of $k$-subspaces $W_q \le X_q$ intersecting and distinguishing 
$K_q$, respectively. The following hold.
\begin{enumerate}[label={(\arabic*)}]
    \item \label{thm:asyv1} We have  $$\frac{|\mF_q|}{\qbin{N}{k}{q}} \in O\left(\frac{|K_q|}{q^{N-k+1}}\right)  \quad \mbox{as $q \to +\infty$.}$$ 
    In particular, if $|K_q| \in  o(q^{N-k+1})$ as $q \to +\infty$, then
    $$\lim_{q \to +\infty} \frac{|\mF_q|}{\qbin{N}{k}{q}} =0.$$
    
    \item \label{thm:asyv2} We have
    $$\frac{|\mF'_q|}{\qbin{N}{k}{q}} \in O\left(\frac{q^{N-k+1}}{|K_q|}\right)  \quad \mbox{as $q \to +\infty$.}$$
In particular, if $ q^{N-k+1} \in o(|K_q|)$ as $q \to +\infty$, then
    $$\lim_{q \to +\infty} \frac{|\mF'_q|}{\qbin{N}{k}{q}} =0.$$
    
\end{enumerate}
\end{theorem}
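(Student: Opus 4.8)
The plan is to derive Theorem~\ref{thm:asyv} directly from the non-asymptotic bounds of Theorem~\ref{thm:boundsv} by carefully estimating each of the two closed-form expressions as $q \to +\infty$. The hypothesis $|K_q| \ge q$ guarantees that Theorem~\ref{thm:boundsv} applies for every $q \in Q$, so throughout we have the sandwich
$$\frac{\frac{|K_q|-1}{q-1}\, \qbin{N-1}{k-1}{q}}{1+\left(\frac{|K_q|-1}{q-1}-1\right)\left(\frac{q^{k-1}-1}{q^{N-1}-1}\right)} \le |\mF_q| \le \frac{|K_q|-1}{q-1} \, \qbin{N-1}{k-1}{q},$$
and for part~\ref{thm:asyv1} I only need the upper bound here, while for part~\ref{thm:asyv2} I pass to the complementary count $|\mF_q'| = \qbin{N}{k}{q} - |\mF_q|$ and use the lower bound above.

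For part~\ref{thm:asyv1}: divide the upper bound by $\qbin{N}{k}{q}$. Using~\eqref{eq:qbin} we have $\qbin{N-1}{k-1}{q} \sim q^{(k-1)(N-k)}$ and $\qbin{N}{k}{q} \sim q^{k(N-k)}$, so the ratio $\qbin{N-1}{k-1}{q}/\qbin{N}{k}{q}$ is $\Theta(q^{-(N-k)})$; combined with $(|K_q|-1)/(q-1) = \Theta(|K_q|/q)$ this yields $|\mF_q|/\qbin{N}{k}{q} \in O(|K_q| \, q^{-(N-k)-1}) = O(|K_q|/q^{N-k+1})$, which is exactly the claim. The ``in particular'' statement is then immediate, since $|K_q| \in o(q^{N-k+1})$ forces the $O(\cdot)$ term to $0$.

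For part~\ref{thm:asyv2}: write $|\mF_q'|/\qbin{N}{k}{q} = 1 - |\mF_q|/\qbin{N}{k}{q}$ and plug in the lower bound for $|\mF_q|$. Set $t_q := (|K_q|-1)/(q-1)$, so the lower bound is $t_q \, \qbin{N-1}{k-1}{q} \,/\, \bigl(1 + (t_q-1)\tfrac{q^{k-1}-1}{q^{N-1}-1}\bigr)$. The key observation is that $\tfrac{q^{k-1}-1}{q^{N-1}-1} \sim q^{-(N-k)}$ and $\qbin{N-1}{k-1}{q}/\qbin{N}{k}{q} \sim q^{-(N-k)}$ share the same exponent, so after dividing through one gets, up to factors tending to $1$,
$$\frac{|\mF_q|}{\qbin{N}{k}{q}} \gtrsim \frac{t_q \, q^{-(N-k)}}{1 + t_q\, q^{-(N-k)}} = \frac{1}{1 + (t_q\, q^{-(N-k)})^{-1}} = 1 - O\!\left(\frac{q^{N-k}}{t_q}\right),$$
and since $t_q = \Theta(|K_q|/q)$ this gives $1 - |\mF_q|/\qbin{N}{k}{q} \in O(q^{N-k+1}/|K_q|)$, as required; the ``in particular'' part follows because $q^{N-k+1} \in o(|K_q|)$ sends this to $0$. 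The main obstacle, and the only place requiring genuine care rather than bookkeeping, is making the $\sim$-estimates uniform enough that the manipulation of the rational expression in~\ref{thm:asyv2} is legitimate: one must check that the numerator and denominator corrections are bounded away from $0$ and $\infty$ (equivalently, that the ``$1+$'' and the ``$+1$'' in the binomial asymptotics do not interfere with the dominant power of $q$), so that dividing and taking reciprocals preserves the $O$-bounds. This is routine but worth spelling out, and it is cleanest to handle it by replacing each $q$-binomial and each $q$-power difference by its leading term times $(1+o(1))$ and tracking the error factors explicitly.
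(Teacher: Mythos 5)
Part~\ref{thm:asyv1} of your proposal is correct and is exactly the paper's argument: divide the upper bound of Theorem~\ref{thm:boundsv} by $\qbin{N}{k}{q}$ and estimate. The gap is in part~\ref{thm:asyv2}, at the step where you replace the exact lower bound by ``$\frac{t_q\,q^{-(N-k)}}{1+t_q\,q^{-(N-k)}}$ up to factors tending to $1$'' and conclude $1-|\mF_q|/\qbin{N}{k}{q}\in O(q^{N-k}/t_q)$. You are bounding $1$ minus a quantity that is itself close to $1$, so multiplicative $(1+o(1))$ control does not transfer: if $|\mF_q|/\qbin{N}{k}{q}\ge(1+\epsilon_q)(1-c_q)$ with $c_q=O(q^{N-k}/t_q)$ and $\epsilon_q=o(1)$ of unknown sign, all you get is $|\mF'_q|/\qbin{N}{k}{q}\le c_q(1+|\epsilon_q|)+|\epsilon_q|$. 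The stray additive term $|\epsilon_q|$, which is of order $1/q$ coming from the binomial and $q$-power corrections, is \emph{not} absorbed by the target $O(q^{N-k+1}/|K_q|)$ once $|K_q|$ grows faster than $q^{N-k+2}$ (e.g.\ $|K_q|\sim q^N$ and $k\ge 3$, where the target is $O(q^{1-k})$). Equivalently, in the exact expression the dangerous term is $t_q\bigl(\qbin{N-1}{k-1}{q}/\qbin{N}{k}{q}-\frac{q^{k-1}-1}{q^{N-1}-1}\bigr)$: since $t_q$ can be as large as roughly $q^{N-1}$, you need the difference of these two quantities to next order and with its sign, and ``leading term times $(1+o(1))$'' with corrections merely ``bounded away from $0$ and $\infty$'' cannot see this. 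So the criterion you propose to check is precisely the insufficient one; your argument proves the ``in particular'' limits but not the stated $O$-bound, and the full $O$-bound is what the paper later uses (Theorem~\ref{thm:asycc}\ref{pspread} and the rate in Theorem~\ref{sparseness}).

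The paper closes exactly this hole by an exact, not asymptotic, manipulation. Writing $t_q=(|K_q|-1)/(q-1)$ and $s_q=(q^{k-1}-1)/(q^{N-1}-1)$, it rewrites the bound as a single fraction whose numerator is $1-s_q-t_q\bigl(\frac{q^{k}-1}{q^{N}-1}-s_q\bigr)$, using the identity $\qbin{N-1}{k-1}{q}/\qbin{N}{k}{q}=(q^k-1)/(q^N-1)$, and observes that $\frac{q^{k}-1}{q^{N}-1}-s_q=(q-1)(q^{N-1}-q^{k-1})/\bigl((q^N-1)(q^{N-1}-1)\bigr)>0$. That term can therefore be dropped, the numerator is at most $1$, and the denominator alone gives $|\mF'_q|/\qbin{N}{k}{q}\le 1/\bigl((t_q-1)\,s_q\bigr)\in O(q^{N-k+1}/|K_q|)$. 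To repair your write-up you must either reproduce this positivity argument or compute the difference above exactly and check that $t_q$ times it stays bounded; without one of these, the final display in your part~\ref{thm:asyv2} is not justified.
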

\begin{proof}
In the sequel, all asymptotic estimates are for $q \to + \infty$. By the upper bound in Theorem~\ref{thm:boundsv} we have
\begin{equation*}
\frac{|\mF_q|}{\qbin{N}{k}{q}} \le 
\frac{\displaystyle\frac{|K_q|-1}{q-1} \, \qbin{N-1}{k-1}{q}}{\qbin{N}{k}{q}},
\end{equation*}
which establishes the first part of the statement by taking the limit.
For the second part, observe first that $|\mF_q| + |\mF'_q|=\qbin{N}{k}{q}$.
Therefore by the lower bound for $|\mF_q|$ in Theorem~\ref{thm:boundsv} we have
\begin{align} \label{pf:asyv0}
\frac{|\mF'_q|}{\qbin{N}{k}{q}} \le 1-
\frac{\displaystyle\frac{|K_q|-1}{q-1}\,\qbin{N-1}{k-1}{q}}{\qbin{N}{k}{q} \left(1+\displaystyle\left(\frac{|K_q|-1}{q-1} -1\right)\displaystyle\left(\frac{q^{k-1}-1}{q^{N-1}-1}\right)\right)}.
\end{align}
The latter inequality can be rewritten as
\begin{equation} \label{pf:asyv1}
\frac{|\mF'_q|}{\qbin{N}{k}{q}} \le 
\frac{1 -\displaystyle\frac{q^{k-1}-1}{q^{N-1}-1}- \displaystyle\frac{|K_q|-1}{q-1}\left(\frac{\qbin{N-1}{k-1}{q}}{\qbin{N}{k}{q}} -\displaystyle\frac{q^{k-1}-1}{q^{N-1}-1}\right)}{1+\displaystyle\left(\frac{|K_q|-1}{q-1} -1\right)\displaystyle\left(\frac{q^{k-1}-1}{q^{N-1}-1}\right)}.
\end{equation} 
Note that the quantity
    \begin{align*}
        \frac{\qbin{N-1}{k-1}{q}}{\qbin{N}{k}{q}} -\displaystyle\frac{q^{k-1}-1}{q^{N-1}-1} =  \displaystyle\frac{q^{k}-1}{q^{N}-1} - \displaystyle\frac{q^{k-1}-1}{q^{N-1}-1}
    \end{align*}
    is positive for any $q$. Thus from~\eqref{pf:asyv1} we obtain 
\begin{align*}
\frac{|\mF'_q|}{\qbin{N}{k}{q}} &\le \frac{1 -\displaystyle\frac{q^{k-1}-1}{q^{N-1}-1}}{1+\displaystyle\left(\frac{|K_q|-1}{q-1} -1\right)\displaystyle\left(\frac{q^{k-1}-1}{q^{N-1}-1}\right)} \le 
\frac{1}{\displaystyle\left(\frac{|K_q|-1}{q-1} -1\right)\displaystyle\left(\frac{q^{k-1}-1}{q^{N-1}-1}\right)},
\end{align*}
from which the second part of the statement follows easily by taking the limit.
\end{proof}

\begin{remark} \label{rmk:mayornot}
Theorem~\ref{thm:asyv} does not predict any asymptotic behavior in the case where 
$|K_q| \sim \gamma \, q^{N-k+1}$ as $q \to +\infty$  for some constant $\gamma \in \R_{>0}$.
The following Proposition~\ref{prop:onehalf} shows that for any such $\gamma$ the family $\smash{(\mF_q')_{q \in Q}}$ is not dense. 
Right after Proposition~\ref{prop:onehalf} we will include an example showing that,
when $\smash{|K_q| \sim q^{N-k+1}}$ as $q \to +\infty$,  $(\mF_q')_{q \in Q}$ may or may not be sparse.
\end{remark}

The following result is easy to obtain by computing the asymptotics in~\eqref{pf:asyv0}. The details of the proof are omitted.

\begin{proposition} \label{prop:onehalf} 
Let $(X_q)_{q \in Q}$ be a sequence of vector spaces, all of which have the same dimension $N \ge 3$ over $\F_q$. Let $(K_q)_{q \in Q}$ be a sequence of cones with $K_q \sim \gamma \, q^{N-k+1}$ for $q \to +\infty$, where $\gamma \in \R_{>0}$ is a constant.
Let $1 \le k \le N-1$ be an integer and for $q \in Q$ denote by $\mF'_q$
the collection of $k$-subspaces $W_q \le X_q$ distinguishing 
$K_q$. We have 
$$\limsup_{q \to +\infty} \, \frac{|\mF'_q|}{\qbin{N}{k}{q}} \le \frac{1}{\gamma+1} < 1.$$
\end{proposition}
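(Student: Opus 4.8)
The plan is to start from the explicit lower bound on $|\mF_q|$ furnished by Theorem~\ref{thm:boundsv}, applied with $N$, $k$ as given and $K = K_q$. Since every $k$-subspace of $X_q$ either intersects $K_q$ or distinguishes it, we have $|\mF_q| + |\mF'_q| = \qbin{N}{k}{q}$, so a lower bound on $|\mF_q|$ translates into an upper bound on $|\mF'_q|/\qbin{N}{k}{q}$. Concretely, this is exactly inequality~\eqref{pf:asyv0} from the proof of Theorem~\ref{thm:asyv}: writing $v_q := (|K_q|-1)/(q-1)$ for the number of projective points coming from $K_q$, we get
$$
\frac{|\mF'_q|}{\qbin{N}{k}{q}} \le 1 - \frac{v_q\, \qbin{N-1}{k-1}{q}}{\qbin{N}{k}{q}\left(1 + (v_q - 1)\dfrac{q^{k-1}-1}{q^{N-1}-1}\right)}.
$$

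Next I would compute the limit of the right-hand side under the hypothesis $|K_q| \sim \gamma\, q^{N-k+1}$, which gives $v_q \sim \gamma\, q^{N-k}$. Using $\qbin{N-1}{k-1}{q}/\qbin{N}{k}{q} = (q^k-1)/(q^N-1) \sim q^{k-N}$ and $(q^{k-1}-1)/(q^{N-1}-1) \sim q^{k-N}$, both the numerator term $v_q\,\qbin{N-1}{k-1}{q}$ and the term $v_q\cdot(q^{k-1}-1)/(q^{N-1}-1)$ in the denominator are asymptotic to $\gamma$. Hence the whole subtracted fraction tends to $\gamma/(1+\gamma)$, and therefore
$$
\limsup_{q \to +\infty} \frac{|\mF'_q|}{\qbin{N}{k}{q}} \le 1 - \frac{\gamma}{\gamma+1} = \frac{1}{\gamma+1} < 1,
$$
which is the claim. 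The $\limsup$ (rather than a genuine limit) appears because the hypothesis only controls $|K_q|$ up to asymptotic equivalence and the precise lower-order behaviour of the bound is not pinned down; the strict inequality $1/(\gamma+1) < 1$ is immediate since $\gamma > 0$.

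There is no real obstacle here: the only mildly delicate point is making sure the chain of asymptotic substitutions in the fraction is legitimate, i.e. that denominators stay bounded away from zero and that one may pass to the limit termwise in a quotient of sums of rational functions of $q$. This is routine since all the relevant quantities are positive and each numerator and denominator is a finite sum of terms each of which has a well-defined leading power of $q$; one just collects leading terms. For this reason the paper omits the details, and I would do the same, simply noting that the estimate follows by taking the limit in~\eqref{pf:asyv0} after substituting $|K_q| \sim \gamma q^{N-k+1}$ and the standard estimates for the $q$-binomial ratios.
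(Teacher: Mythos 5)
Your argument is exactly the one the paper intends: the paper states Proposition~\ref{prop:onehalf} with the remark that it follows by computing the asymptotics in~\eqref{pf:asyv0}, and your limit computation (lower bound of Theorem~\ref{thm:boundsv} plus $|\mF_q|+|\mF'_q|=\qbin{N}{k}{q}$, then substituting $v_q\sim\gamma q^{N-k}$ and the $q$-binomial ratios) supplies precisely those omitted details. One small caveat: for $k=1$ the factor $(q^{k-1}-1)/(q^{N-1}-1)$ is identically zero rather than asymptotic to $q^{k-N}$, so the subtracted fraction tends to $\gamma$ and your bound becomes $1-\gamma$, which still yields the claim because $1-\gamma \le 1/(\gamma+1)$ for all $\gamma>0$.
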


We now provide the examples mentioned in Remark~\ref{rmk:mayornot}, illustrating two possible behaviors in the case $|K_q| \sim q^{N-k+1}$ as $q \to +\infty$.

\begin{example}
\begin{enumerate}[label={(\arabic*)}]
\item Let $(X_q)_{q \in Q}$ be a sequence of linear spaces, all of which have the same dimension $N \ge 3$ over $\F_q$. Let $1 \le k \le N-1$ be an integer and fix a sequence $(K_q)_{q \in Q}$ of $(N-k+1)$-spaces with $K_q \le X_q$ for all $q \in Q$. By dimension considerations, for all $q \in Q$, there are no $k$-spaces in $X_q$ avoiding the cone $K_q$. In particular, the $k$-spaces avoiding $K_q$ are trivially sparse.
\item By definition, the $m$-dimensional rank-metric codes in $\F_q^{2 \times m}$ of minimum distance $2$ are exactly the $m$-dimensional spaces distinguishing the cone of matrices of rank strictly smaller than 2. There are $\smash{\bbq{2 \times m, 1} -1\sim q^{m+1}}$ such matrices as $q \to +\infty$; see the estimate in~\eqref{asball}. We also have $\smash{\delta_q(2 \times m, m, 2) \sim \sum_{i=0}^m (-1)^i/i! >0}$ as
$q \to +\infty$, as shown in~\cite[Corollary~VII.5]{antrobus2019maximal}. In particular, the $m$-dimensional subspaces of $\F_q^{2 \times m}$ distinguishing the ball in $\F_q^{2 \times m}$ of radius $1$ are not sparse.
\end{enumerate}
\end{example}

We now turn to the main result of this section (Theorem~\ref{thm:asycc}). As we will see later, of particular interest for the study of MRD codes are families of linear spaces
that, asymptotically, behave like a \textit{partial spread} (we refer the reader to~\cite{beutelspacher1979t} for the notion of partial spread in finite geometry). More precisely, we propose the following concept.

\begin{definition} \label{def:asyspread}
Let $(X_q)_{q \in Q}$ be a sequence of vector spaces of the same dimension $N \ge 3$ over $\F_q$. Let $(\mA_q)_{q \in Q}$ be a sequence of collections of subspaces $A_q \le X_q$, all of which have the same dimension $k$. We say that $(\mA_q)_{q \in Q}$ is an \textbf{asymptotic partial spread}
if
$$\left| \bigcup_{A_q \in \mA_q}  A_q\right| \sim |\mA_q| \, q^k   \quad \mbox{as $q \to +\infty$},$$
i.e., if the cardinality of the union $\bigcup_{A_q \in \mA_q} A_q$ has the largest possible asymptotics for the given parameters.
\end{definition}

We are now ready to state the asymptotic version of Theorem~\ref{thm:boundscc}.
The result gives asymptotic estimates for the proportion of common complements of a collection of subspaces. Notice that Part~\ref{pspread} of the next theorem will play a central role in establishing Theorem~\ref{sparseness}, which is one of the main results of this paper.

\begin{theorem} \label{thm:asycc}
Let $(X_q)_{q \in Q}$ be a sequence of vector spaces, all of which have the same dimension $N \ge 3$ over $\F_q$.
Let $1 \le k \le N-1$ be an integer and let $(\mA_q)_{q \in Q}$ be a sequence of non-empty collections of linear spaces,
all of which have codimension $k$,
with $A_q \le X_q$ for all $q \in Q$ and $A_q \in \mA_q$.
For $q \in Q$, denote by~$\mF_q$ and~$\mF'_q$
the collections of $k$-subspaces $W_q \le X_q$ that intersect some $A_q \in \mA_q$ and that distinguish every~$A_q \in \mA_q$, respectively. Then the following hold. 
\begin{enumerate}[label={(\arabic*)}]
    \item We have $$    \frac{|\mF_q|}{\qbin{N}{k}{q}} \in O \left( \frac{|\mA_q|}{q}  \right) \quad \mbox{as $q \to +\infty$.}$$
    In particular, if $|\mA_q| \in o(q)$ as $q \to +\infty$, then
    $$\lim_{q \to +\infty} \frac{|\mF_q|}{\qbin{N}{k}{q}} =0.$$

    \item \label{pspread} Suppose that $(\mA_q)_{q \in Q}$ is an asymptotic partial spread.
    Then
    $$    \frac{|\mF'_q|}{\qbin{N}{k}{q}} \in O \left( \frac{q}{|\mA_q|}  \right) \quad \mbox{as $q \to +\infty$.}$$
    In particular, if $q \in o(|\mA_q|)$ as $q \to +\infty$, then
    $$\lim_{q \to +\infty} \frac{|\mF'_q|}{\qbin{N}{k}{q}} =0.$$
    
    \item Suppose that $q \in o(|\mA_q|)$ as $q \to +\infty$ and that there exist $\overline{q} \in Q$ and an integer $\ell$ with 
    $\max\{0,N-2k\} \le \ell < N-k-1$ or $\ell = \max\{0,N-2k\} = N-k-1$ that satisfy the following property: 
    \begin{align} \label{maxx}
        \max\{\dim(A_q \cap A'_q) \mid (A_q, A'_q) \in \mA_q^2, \, A_q \neq A'_q\} \le \ell \quad \mbox{for all $q \in Q$, $q \ge \overline{q}$}.
    \end{align} 
    Then the following hold.
    
    \begin{enumerate}[label={(3\alph*)}]
    \item \label{case3a} If $\ell = \max\{0,N-2k\}$, then
    $$    \frac{|\mF'_q|}{\qbin{N}{k}{q}} \in O \left( \frac{q}{|\mA_q|}  \right) \quad \mbox{as $q \to +\infty$.}$$
    
    \item \label{case3b} If $\ell > \max\{0,N-2k\}$, then
    $$    \frac{|\mF'_q|}{\qbin{N}{k}{q}} \in O \left( \frac{q}{|\mA_q|}  + q^{-N+k+\ell+1} \right) \quad \mbox{as $q \to +\infty$.}$$
    \end{enumerate}
    In either case we have
    $$\lim_{q \to +\infty} \frac{|\mF'_q|}{\qbin{N}{k}{q}} =0.$$
\end{enumerate}
\end{theorem}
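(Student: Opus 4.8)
The plan is to derive everything from the two-sided estimate in Theorem~\ref{thm:boundscc}, combined with the asymptotic evaluation of $\nu_q(N,k,\ell)$ supplied by Notation~\ref{notat:nu} (and the formula referenced as~\eqref{pf:nuq1} in the Appendix). The first observation is the leading-order behaviour of $\nu_q(N,k,\ell)$ as $q \to +\infty$: since $\qbin{N}{k}{q} \sim q^{k(N-k)}$ and $q^{k(N-k)}$ also governs the second summand, the first two terms of $\nu_q$ nearly cancel, and one finds $\nu_q(N,k,\ell) \sim c \, q^{(2k-N+\ell)(N-k) + e}$ for an appropriate exponent $e$ coming from the product $\prod_{i=\ell}^{N-k-1}(q^{N-k}-q^i)$; in particular $\nu_q(N,k,N-k) \sim q^{k(N-k)-N+k+1}$ up to a constant, so that $\nu_q(N,k,N-k)/\qbin{N}{k}{q} \in \Theta(q^{-(N-k)+1})$. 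Getting these estimates cleanly is the one genuinely computational step, but it is routine bookkeeping with $q$-powers.

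\medskip
\noindent\textbf{Part (1).} Here I would simply feed the upper bound $|\mF_q| \le |\mA_q| \, \nu_q(N,k,N-k)$ from Theorem~\ref{thm:boundscc} into the ratio, giving
\[
\frac{|\mF_q|}{\qbin{N}{k}{q}} \le |\mA_q| \cdot \frac{\nu_q(N,k,N-k)}{\qbin{N}{k}{q}} \in O\!\left(\frac{|\mA_q|}{q}\right),
\]
using $\nu_q(N,k,N-k)/\qbin{N}{k}{q} \in O(q^{-(N-k)+1}) \subseteq O(q^{-1})$ since $N-k \ge 1$ (actually $N - k \ge 2$ because $k \le N-1$ and $N \ge 3$ force... no, $k$ could be $N-1$; but then $N-k=1$ and the exponent is $0$, so I should double-check: when $k=N-1$ we need $|\mF_q|/\qbin{N}{k}{q} \in O(|\mA_q|)$, which is still fine as a $O(|\mA_q|/q)$ statement fails — so I expect the intended reading uses $N-k \ge 2$, or one checks the $k=N-1$ case is vacuous because codimension-$(N-1)$ subspaces are lines and the union argument degenerates; I would verify this edge case carefully). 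The "in particular" clause is immediate by taking limits.

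\medskip
\noindent\textbf{Part (2).} The point is that the asymptotic-partial-spread hypothesis controls the intersection structure. Write $S_\ell := |\{(A_q,A_q') \in \mA_q^2 : \dim(A_q \cap A_q') = \ell\}|$ and recall $\sum_\ell S_\ell = |\mA_q|^2$ with $S_{N-k} \geq |\mA_q|$ (the diagonal). An inclusion–exclusion / union-bound computation shows
\[
\left|\bigcup_{A_q} A_q\right| \ge \sum_{A_q} |A_q| - \sum_{A_q \ne A_q'} |A_q \cap A_q'| = |\mA_q| q^{N-k} - \sum_{A_q \ne A_q'} q^{\dim(A_q \cap A_q')},
\]
so the spread hypothesis $|\bigcup A_q| \sim |\mA_q| q^{N-k}$ forces $\sum_{A_q \ne A_q'} q^{\dim(A_q\cap A_q')} \in o(|\mA_q| q^{N-k})$, hence $\sum_{\ell < N-k} S_\ell \, \nu_q(N,k,\ell) \in o(|\mA_q| q^{N-k} \cdot \nu_q(N,k,N-k-1)/q^{N-k-1})$ after comparing $\nu_q(N,k,\ell)$ with $q^{\ell(N-k)}$-type growth — this needs the monotone-in-$\ell$ estimate and a careful match of exponents. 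Plugging into the lower bound of Theorem~\ref{thm:boundscc},
\[
|\mF_q| \ge \frac{\nu_q(N,k,N-k)^2 |\mA_q|^2}{\sum_{\ell=0}^{N-k} S_\ell \, \nu_q(N,k,\ell)},
\]
the denominator is dominated by the $\ell = N-k$ term $|\mA_q| \nu_q(N,k,N-k)$ (times $|\mA_q|$ from the diagonal-adjacent contributions, more precisely $\le |\mA_q|^2 \nu_q(N,k,N-k)$ in the end), so $|\mF_q|/\qbin{N}{k}{q} \gtrsim \nu_q(N,k,N-k)/\qbin{N}{k}{q} \in \Theta(q^{-(N-k)+1})$, and therefore $|\mF_q'|/\qbin{N}{k}{q} = 1 - |\mF_q|/\qbin{N}{k}{q}$... no — I want an \emph{upper} bound on $|\mF_q'|$, so I use $|\mF_q| + |\mF_q'| = \qbin{N}{k}{q}$ and bound $|\mF_q'| = \qbin{N}{k}{q} - |\mF_q| \le \qbin{N}{k}{q}\bigl(1 - \tfrac{|\mF_q|}{\qbin{N}{k}{q}}\bigr)$; but that is the wrong direction too. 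The correct route is the dual: I should instead estimate $|\mF_q'|$ \emph{directly from the lower bound on $|\mF_q|$} via $|\mF_q'| = \qbin{N}{k}{q} - |\mF_q|$ and show the lower bound on $|\mF_q|$ is $\qbin{N}{k}{q}(1 - O(q/|\mA_q|))$. Expanding the lower bound from the second display of Theorem~\ref{thm:boundscc} (the $\ellmax$ version), $|\mF_q| \ge \frac{\nu_q(N,k,N-k)^2 |\mA_q|}{\nu_q(N,k,N-k) + (|\mA_q|-1)\nu_q(N,k,\ellmax)}$, and since $\ellmax \le N-k-1$ gives $\nu_q(N,k,\ellmax)/\nu_q(N,k,N-k) \to$ something $\le 1$ (in fact $\to$ a constant $<1$ or $0$ depending on $\ell$, by the exponent comparison), one gets $|\mF_q| \gtrsim \qbin{N}{k}{q}$ up to a $(1 + O(q/|\mA_q|))^{-1}$ factor. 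The hard part, and the main obstacle, is precisely this exponent bookkeeping: tracking how $\nu_q(N,k,\ell)/\nu_q(N,k,N-k)$ behaves (it is $\Theta(q^{-(N-k-\ell)(N-k)+(N-k-\ell)})$ or similar), and confirming that the asymptotic-partial-spread condition is exactly what kills the off-diagonal sum at the right rate — without it the denominator in the general lower bound could swamp the numerator.

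\medskip
\noindent\textbf{Part (3).} This is the quantitative refinement of Part~(2) under the explicit hypothesis $\ellmax \le \ell$ with $\ell$ in the stated range, where additionally $q \in o(|\mA_q|)$. I would use the $\ellmax$-form of the lower bound in Theorem~\ref{thm:boundscc} with $\ellmax$ replaced by the (larger, hence still valid) bound $\ell$, so
\[
|\mF_q| \ge \frac{\nu_q(N,k,N-k)^2 |\mA_q|}{\nu_q(N,k,N-k) + (|\mA_q|-1)\nu_q(N,k,\ell)},
\]
hence
\[
\frac{|\mF_q'|}{\qbin{N}{k}{q}} = 1 - \frac{|\mF_q|}{\qbin{N}{k}{q}} \le 1 - \frac{\nu_q(N,k,N-k)^2 |\mA_q|}{\qbin{N}{k}{q}\bigl(\nu_q(N,k,N-k) + (|\mA_q|-1)\nu_q(N,k,\ell)\bigr)}.
\]
Now substitute $\nu_q(N,k,N-k) \sim c_1 q^{k(N-k)-N+k+1} = c_1 \qbin{N}{k}{q} q^{-N+k+1}$ and $\nu_q(N,k,\ell) \sim c_2 q^{(2k-N+\ell)(N-k)+\,e(\ell)}$ from the Appendix formula, simplify the fraction, and read off the error. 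When $\ell = \max\{0,N-2k\}$ the exponent of $\nu_q(N,k,\ell)$ is as small as possible and the $(|\mA_q|-1)\nu_q(N,k,\ell)$ term is negligible next to $\nu_q(N,k,N-k)$ unless $|\mA_q|$ is huge, which gives the clean $O(q/|\mA_q|)$ in case~\ref{case3a}; when $\ell > \max\{0,N-2k\}$ the extra $q$-power surfaces as the additive $q^{-N+k+\ell+1}$ correction in case~\ref{case3b}. The final limit statement follows since both $q/|\mA_q| \to 0$ (by $q \in o(|\mA_q|)$) and $q^{-N+k+\ell+1} \to 0$ (because $\ell < N-k-1$ forces $-N+k+\ell+1 < 0$, and in the boundary case $\ell = \max\{0,N-2k\} = N-k-1$ one is in case~\ref{case3a} where no such term appears). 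Again the only real work is the exponent arithmetic in matching $\nu_q$'s asymptotics to the claimed big-$O$ rates; no new idea beyond Theorem~\ref{thm:boundscc} and the closed form of $\nu_q$ is needed.
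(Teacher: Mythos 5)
Your overall plan (feed Theorem~\ref{thm:boundscc} into the density ratio and do the exponent arithmetic) matches the paper's proof for Parts (1) and (3), but the quantitative core is wrong and Part (2) rests on a step that fails. First, the asymptotics of $\nu_q$: all three summands in Notation~\ref{notat:nu} have leading power $q^{k(N-k)}$ (the product $q^{(2k-N+\ell)(N-k)}\prod_{i=\ell}^{N-k-1}(q^{N-k}-q^i)$ also has top exponent $k(N-k)$), so they cancel at leading order with coefficients $1-2+1=0$, and the true behaviour is set by second-order terms: $\nu_q(N,k,N-k)\sim q^{k(N-k)-1}$ and $\nu_q(N,k,\ell)\sim q^{k(N-k)-2}$ (or $2q^{k(N-k)-2}$) for $\ell\le N-k-1$; this is exactly Lemma~\ref{lem:nuq}. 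Your claim $\nu_q(N,k,N-k)/\qbin{N}{k}{q}\in\Theta(q^{-(N-k)+1})$ is incorrect (the correct rate is $\Theta(q^{-1})$); with the correct rate the ``edge case'' $k=N-1$ you worry about in Part (1) does not exist. In Part (3) the mechanism you describe is also backwards: in the regime $q\in o(|\mA_q|)$ the denominator is dominated by $(|\mA_q|-1)\,\nu_q(N,k,\ell)\sim|\mA_q|\,q^{k(N-k)-2}$, not by $\nu_q(N,k,N-k)$. The dichotomy \ref{case3a}/\ref{case3b} comes from the sign and size of $\Delta_q:=\nu_q(N,k,N-k)^2/\qbin{N}{k}{q}-\nu_q(N,k,\ell)$, which is positive for large $q$ when $\ell=\max\{0,N-2k\}$ and negative of order $q^{k(N-k)-N+k+\ell-1}$ when $\ell>\max\{0,N-2k\}$ (second block of Lemma~\ref{lem:nuq}); you never isolate this quantity, and with your stated $\nu_q$ asymptotics the deferred ``bookkeeping'' would not produce the claimed rates.

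The more serious gap is Part (2). The asymptotic-partial-spread hypothesis does \emph{not} imply $\sum_{A\neq A'}q^{\dim(A\cap A')}\in o(|\mA_q|\,q^{N-k})$: Bonferroni only bounds the union from below, so a near-maximal union does not force the pairwise sum to be small. Concretely, the family of all $(N-k)$-spaces containing a fixed $(N-k-1)$-space $B$ has $|\mA_q|\sim q^{k}$, union equal to the whole space (hence it is an asymptotic partial spread), pairwise sum of order $|\mA_q|^2q^{N-k-1}$, which is not $o(|\mA_q|q^{N-k})$, and $\ellmax=N-k-1$. More generally the spread hypothesis does not keep $\ellmax$ away from $N-k-1$, and when $\ellmax=N-k-1>\max\{0,N-2k\}$ the lower bound of Theorem~\ref{thm:boundscc} only certifies $|\mF'_q|/\qbin{N}{k}{q}\lesssim 1/2$ (since $\nu_q(N,k,N-k-1)\sim 2q^{k(N-k)-2}$), never $O(q/|\mA_q|)$ --- this is precisely why the statement of Part (3) excludes $\ell=N-k-1$ except when it equals $\max\{0,N-2k\}$. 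So your route through Theorem~\ref{thm:boundscc} cannot prove Part (2). The paper instead forms the cone $K_q=\bigcup_{A_q\in\mA_q}A_q$, uses the spread hypothesis to get $|K_q|\sim|\mA_q|q^{N-k}$, and applies Theorem~\ref{thm:asyv}\ref{thm:asyv2} (the cone bound coming from Theorem~\ref{thm:boundsv}), which exploits the size of the union directly; some such ingredient is needed to complete your Part (2).
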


Before proceeding with the proof of Theorem~\ref{thm:asycc}, we describe its statement from a more ``qualitative'' viewpoint.

\begin{remark}
Theorem~\ref{thm:asycc} illustrates the general behavior of the common complements 
of the spaces in $(\mA_q)_{q \in Q}$ as the field size grows. With the only exception when $\ell=N-k-1$ (case in which we are not able to predict the behavior),
the decisive property for sparsity/density~is whether or not the integer sequence $(|\mA_q|)_{q \in Q}$ is negligible with respect to the field size $q$ in the asymptotics. It is interesting to observe that Theorem~\ref{thm:asycc} does not extend to the case where, for example,
$|\mA_q| \sim q$ as $q \to +\infty$. We will elaborate on this at the~end of the section; see Example~\ref{counter_cc}.
\end{remark}

In the remainder of the section we establish Theorem~\ref{thm:asycc}. We start with a technical lemma, whose proof can be found in the Appendix.


\begin{lemma} \label{lem:nuq}
Let $N$, $k$ and $\ell$ be integers as in Notation~\ref{notat:nu}.
The following estimates hold 
 as $q \to + \infty$:
\begin{align*}
    \nu_q(N,k,\ell) &\sim \begin{cases}
    q^{k(N-k)-2} &\textnormal{ if } \max\{0,N-2k\} \le \ell < N-k-1 \\
    &\textnormal{ or } \ell = N-k-1 = \max\{0,N-2k\}, \\
    2q^{k(N-k)-2} &\textnormal{ if } \ell = N-k-1 > \max\{0,N-2k\}, \\
    q^{k(N-k)-1} &\textnormal{ if } \ell = N-k,
    \end{cases}   \\
    & \ \\
    \frac{\nu_q(N,k,N-k)^2}{\qbin{N}{k}{q}} - \nu_q(N,k,\ell) &\sim \begin{cases}
    q^{k(N-k)-N+k-1} &\textnormal{ if } \ell = N-2k, \\
    q^{k(N-k)-k-1} &\textnormal{ if } \ell = 0, \\
    -q^{k(N-k)-N+k+\ell-1} &\textnormal{ if } \ell > \max\{0,N-2k\}.
    \end{cases}
\end{align*}
\end{lemma}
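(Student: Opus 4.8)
The goal is to prove the asymptotic estimates in Lemma~\ref{lem:nuq} for $\nu_q(N,k,\ell)$ and for the ``defect'' quantity $\nu_q(N,k,N-k)^2/\qbin{N}{k}{q}-\nu_q(N,k,\ell)$. My plan is to start from a more tractable closed form for $\nu_q(N,k,\ell)$ than the one in Notation~\ref{notat:nu}, since the product $\prod_{i=\ell}^{N-k-1}(q^{N-k}-q^i)$ obscures the leading-order behaviour; presumably this is the ``Equation~\eqref{pf:nuq1} in the Appendix'' alluded to in the proof of Theorem~\ref{thm:boundscc}. Concretely, I would rewrite
\[
\nu_q(N,k,\ell)=\qbin{N}{k}{q}-2q^{k(N-k)}+q^{(2k-N+\ell)(N-k)}\prod_{i=\ell}^{N-k-1}(q^{N-k}-q^i)
\]
and factor $q^{i}$ out of each term of the product to get $q^{(2k-N+\ell)(N-k)}\cdot q^{(N-k)\ell+\binom{N-k-\ell}{2}\cdot 0}\cdots$ — more carefully, $\prod_{i=\ell}^{N-k-1}(q^{N-k}-q^i)=q^{\ell+(\ell+1)+\cdots+(N-k-1)}\prod_{i=\ell}^{N-k-1}(q^{N-k-i}-1)$, so the whole third term becomes $q^{k(N-k)}\prod_{j=1}^{N-k-\ell}(q^{j}-1)$ after collecting exponents (the exponent bookkeeping: $(2k-N+\ell)(N-k)+\sum_{i=\ell}^{N-k-1}i = k(N-k)$, which I would verify by a short computation). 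Thus
\[
\nu_q(N,k,\ell)=\qbin{N}{k}{q}-2q^{k(N-k)}+q^{k(N-k)}\prod_{j=1}^{N-k-\ell}(q^j-1).
\]

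From this form the first block of estimates follows by elementary expansion. Using $\qbin{N}{k}{q}=q^{k(N-k)}+q^{k(N-k)-1}+O(q^{k(N-k)-2})$ and $\prod_{j=1}^{t}(q^j-1)=q^{t(t+1)/2}-q^{t(t+1)/2-1}+O(q^{t(t+1)/2-2})$ for $t=N-k-\ell\ge 0$, one expands
\[
\nu_q(N,k,\ell)=q^{k(N-k)}\Bigl(q^{t(t+1)/2}-q^{t(t+1)/2-1}-1+q^{-1}+O(q^{-2})\Bigr).
\]
Then I split into cases on $t$. If $t=0$ (i.e.\ $\ell=N-k$) the bracket is $q^{-1}+O(q^{-2})$, giving $\nu_q\sim q^{k(N-k)-1}$. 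If $t=1$ (i.e.\ $\ell=N-k-1$) the bracket is $(q-1)-1+q^{-1}+O(q^{-2})$, but one must be careful: the constraint $N-2k\le\ell$ forces $t=N-k-\ell\le k$, and when $\ell=N-k-1=\max\{0,N-2k\}$ we have $t=1$ and also $k\ge 1$; here the bracket's leading term is $q$ from the product, but wait — that would give $q^{k(N-k)+1}$, which contradicts the claimed $q^{k(N-k)-2}$. The resolution is that in the regime $\ell=N-k-1=\max\{0,N-2k\}$ we additionally must have $N-2k\ge \ell=N-k-1$, i.e.\ $k\le 1$, so $k=1$, $t=1$, and then $\prod_{j=1}^{1}(q^j-1)=q-1$ while $\qbin{N}{1}{q}-2q^{N-1}$ has leading term $-q^{N-1}$... this shows I need to handle the boundary cases by carefully tracking which of $\qbin{N}{k}{q}$, $-2q^{k(N-k)}$, and the product term dominate. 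This case analysis — reconciling the several sub-regimes of $\ell$ against the combinatorial constraints $\max\{0,N-2k\}\le\ell\le N-k$ — is the main obstacle, and I would organize it as a table keyed on the value of $N-k-\ell$ and on whether $N-2k$ is the active lower bound.

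For the second block, I would substitute the derived formula into $\nu_q(N,k,N-k)^2/\qbin{N}{k}{q}$. Since $\nu_q(N,k,N-k)=\qbin{N}{k}{q}-q^{k(N-k)}$ (taking $\ell=N-k$, $t=0$, product $=1$), a clean computation gives
\[
\frac{\nu_q(N,k,N-k)^2}{\qbin{N}{k}{q}}=\qbin{N}{k}{q}-2q^{k(N-k)}+\frac{q^{2k(N-k)}}{\qbin{N}{k}{q}},
\]
so that
\[
\frac{\nu_q(N,k,N-k)^2}{\qbin{N}{k}{q}}-\nu_q(N,k,\ell)=\frac{q^{2k(N-k)}}{\qbin{N}{k}{q}}-q^{k(N-k)}\prod_{j=1}^{N-k-\ell}(q^j-1).
\]
Now $q^{2k(N-k)}/\qbin{N}{k}{q}=q^{k(N-k)}\bigl(1-q^{-1}+q^{-2}-\cdots\bigr)^{?}$ — more precisely one uses $\qbin{N}{k}{q}^{-1}=q^{-k(N-k)}(1-q^{-1}+O(q^{-2}))$ after factoring out the dominant term, so the first summand is $q^{k(N-k)}-q^{k(N-k)-1}+O(q^{k(N-k)-2})$, and I subtract the expansion of $q^{k(N-k)}\prod_{j=1}^{t}(q^j-1)$. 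For $t=N-k-\ell$ with $\ell>\max\{0,N-2k\}$ (so $t<\min\{k,N-k\}$, $t\ge 1$), the product contributes $q^{k(N-k)+t(t+1)/2}$ at top order with a sign, which for $t\ge 1$ dominates $q^{k(N-k)}$, yielding $-q^{k(N-k)+t(t+1)/2}+\cdots$; I then need to check this matches $-q^{k(N-k)-N+k+\ell-1}$, i.e.\ that $t(t+1)/2=-N+k+\ell-1$, which only holds when... this again does not match naively, so evidently the relevant cancellation happens between $q^{2k(N-k)}/\qbin{N}{k}{q}$ and the product term at a deeper level than leading order — meaning I should expand $\qbin{N}{k}{q}$ as an exact polynomial (Gaussian binomial) and the product exactly, then identify the first non-cancelling power. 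The cases $\ell=N-2k$ and $\ell=0$ are the extreme ones where $t=2k-N$ or... I realize the safest route is: write everything over the common denominator $\qbin{N}{k}{q}$, expand the numerator as an honest integer polynomial in $q$, and read off its degree and leading coefficient; the three sub-cases correspond to which monomial survives. I would present this computation compactly, citing~\eqref{eq:qbin} and the identity $\qbin{N}{k}{q}=\sum_{\lambda\subseteq k\times(N-k)}q^{|\lambda|}$ only as needed, and relegate the routine algebra to the reader or to the displayed chain of equalities.
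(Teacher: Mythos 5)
Your plan is the same in spirit as the paper's (rewrite $\nu_q(N,k,\ell)$ so that the three terms can be expanded in powers of $1/q$, then do a case analysis), but it founders on an algebra error at the very first step, and that error propagates through everything else. The exponent bookkeeping you promised to verify is false: factoring $q^i$ out of each factor of $\prod_{i=\ell}^{N-k-1}(q^{N-k}-q^i)$ gives $(2k-N+\ell)(N-k)+\sum_{i=\ell}^{N-k-1}i \,=\, k(N-k)-\binom{N-k-\ell+1}{2}$, not $k(N-k)$. The correct rewriting is therefore $\nu_q(N,k,\ell)=\qbin{N}{k}{q}-2q^{k(N-k)}+q^{k(N-k)}\prod_{j=1}^{N-k-\ell}\bigl(1-q^{-j}\bigr)$ (this is, up to clearing a denominator, the paper's identity \eqref{pf:nuq1}), whereas your version with $\prod_{j=1}^{N-k-\ell}(q^j-1)$ overstates the third term by a factor $q^{\binom{N-k-\ell+1}{2}}$. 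That is exactly why you run into the "contradiction" you notice in the case $t=1$: with your formula the third term would dominate and give order $q^{k(N-k)+1}$, which is simply wrong, and the attempted rescue (arguing the regime forces $k=1$) neither fixes the formula nor covers the other boundary case $k=N-1$ hidden in $\ell=N-k-1=\max\{0,N-2k\}$.

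With the corrected identity, all three terms equal $q^{k(N-k)}$ times something of the form $1+O(1/q)$, the leading coefficients cancel ($1-2+1=0$), and the entire content of the lemma is the second-order expansion (for the first block) and a deeper cancellation analysis (for the second block), distinguishing the cases $\ell=N-2k$, $\ell=0$, $\ell>\max\{0,N-2k\}$ by factoring out common subproducts of $\prod_i(1-q^{-i})$ and locating the first non-cancelling power of $q^{-1}$. Your proposal never carries this out: in the second block you again hit a mismatch caused by the wrong product form, and your fallback ("write everything over a common denominator, expand the numerator as an integer polynomial, and read off its degree and leading coefficient") is precisely the work that constitutes the proof, left undone. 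Note also that $\nu_q(N,k,N-k)^2/\qbin{N}{k}{q}-\nu_q(N,k,\ell)$ has order far below $q^{k(N-k)}$ in all three cases, so there is a cascade of cancellations to track; a correct argument must exhibit them explicitly, as the paper's appendix does. So this is a genuine gap, not a presentational one: fix the factorization, rederive the analogue of \eqref{pf:nuq1}, and then perform the case-by-case expansions in full.
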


\begin{proof}[Proof of Theorem~\ref{thm:asycc}] 
All asymptotic estimates in this proof are for $q \to + \infty$. We examine the three cases in the statement separately.
\begin{enumerate}[label={(\arabic*)}]

\item 
By Theorem~\ref{thm:boundscc} we have
$$\frac{|\mF_q|}{\qbin{N}{k}{q}}  \le \frac{|\mA_q|}{\qbin{N}{k}{q}} \, \nu_q(N,k,N-k),$$
which, together with Lemma~\ref{lem:nuq}, gives the desired asymptotic estimate.

\item
Consider the cone $$K_q := \bigcup_{A_q \in \mA_q}  A_q.$$
Since $\mA_q$ is an asymptotic partial spread by assumption, we have $K_q \sim |\mA_q|\,q^{N-k}$. The statement now follows from Theorem~\ref{thm:asyv}\ref{thm:asyv2}.

\item
Denote by 
$\ellmax$ the maximum on the LHS of~\eqref{maxx}.
Since 
$|\mF_q| + |\mF'_q|=\qbin{N}{k}{q}$, the lower bound for $|\mF_q|$ in Theorem~\ref{thm:boundscc}  tells us that, for all $q \ge \overline{q}$, 
\begin{align} 
    \frac{|\mF'_q|}{\qbin{N}{k}{q}} &\le  \nonumber 1-\frac{\nu_q(N,k,N-k)^2\,|\mA_q|}{\qbin{N}{k}{q}\left(\nu_q(N,k,N-k) + (|\mA_q|-1)\, \nu_q(N,k,\ellmax)\right)}\\
    &\le \label{pf:asycc0} 1-\frac{\nu_q(N,k,N-k)^2\,|\mA_q|}{\qbin{N}{k}{q}\left(\nu_q(N,k,N-k) + (|\mA_q|-1)\, \nu_q(N,k,\ell)\right)},
\end{align}
where the latter inequality follows from the fact that $\ell \mapsto \nu_q(N,k,\ell)$ is increasing (for this, see again Equation~\eqref{pf:nuq1} in the Appendix). Define the difference
     \begin{equation*} \label{pf:asycc4}
      \Delta_q(N,k,\ell):= \frac{\nu_q(N,k,N-k)^2}{\qbin{N}{k}{q}}-\nu_q(N,k,\ell).  \end{equation*}
We rewrite the inequality in~\eqref{pf:asycc0} as follows:
    \begin{equation} \label{pf:asycc1} \frac{|\mF'_q|}{\qbin{N}{k}{q}} \le \frac{\nu_q(N,k,N-k)-\nu_q(N,k,\ell)-|\mA_q|\,\Delta_q(N,k,\ell)}{\nu_q(N,k,N-k) + (|\mA_q|-1)\, \nu_q(N,k,\ell)}.
    \end{equation}
    Since $q \in o(|A_q|)$ and $\max\{0,N-2k\} \le \ell < N-k-1$ or $\ell = \max\{0,N-2k\} = N-k-1$, 
    using the asymptotic estimates from Lemma~\ref{lem:nuq} we get
    \begin{align} \label{pf:asycc2}
    \nu_q(N,k,N-k)-\nu_q(N,k,\ell) &\sim q^{k(N-k)-1}, \\ \label{pf:asycc3}
    \nu_q(N,k,N-k) + (|\mA_q|-1) \, \nu_q(N,k,\ell) &\sim |\mA_q| \, q^{k(N-k)-2}.
    \end{align}

      If $\ell =\max\{0,N-2k\}$, then Lemma~\ref{lem:nuq}
      tells us that $\Delta_q(N,k,\ell)$ is 
      positive for $q$ sufficiently large. Therefore case~\ref{case3a} follows by combining~\eqref{pf:asycc1}, \eqref{pf:asycc2}, and~\eqref{pf:asycc3}. In particular, $\lim_{q \to +\infty} |\mF'_q|/\qbin{N}{k}{q} =0$.
      
      Now suppose that $\ell > \max\{0,N-2k\}$, which in turn implies $\ell < N-k-1$.
      Using again Lemma~\ref{lem:nuq}
      we obtain
      \begin{equation*}
       \frac{\Delta_q(N,k,\ell)}{q^{k(N-k)-2}} \sim 
       -q^{-N+k+\ell+1}.
      \end{equation*}
      Combining this estimate with~\eqref{pf:asycc1}, \eqref{pf:asycc2}, and~\eqref{pf:asycc3} one establishes case~\ref{case3b}.
Finally, the fact that $\lim_{q \to +\infty} |\mF'_q|/\qbin{N}{k}{q} =0$ follows from $q \in o(|\mA_q|)$ and 
$\ell < N-k-1$, which implies
$-N+k+\ell+1 \le -1$. \qedhere
\end{enumerate}
\end{proof}

We conclude this section with two examples focusing on the case 
$|\mA_q| \sim q$ as $q \to +\infty$, which is not covered by Theorem~\ref{thm:asycc}.
We show that in such case the common complements can be sparse or not.

\begin{example} \label{counter_cc}
\begin{enumerate}[label={(\arabic*)}]
    \item Let $(X_q)_{q \in Q}$ be a sequence of linear spaces, all of which have the same dimension $N \ge 3$ over $\F_q$.
Fix a sequence $(V_q)_{q \in Q}$ of 2-dimensional spaces $V_q \le X_q$. For $q \in Q$, denote by~$\mA_q$ be the set of 1-dimensional subspaces of $V_q$. We then have $|\mA_q| \sim \qbin{2}{1}{q} \sim q$ as $q \to +\infty$. In particular, there are no $(N-1)$-dimensional subspaces of $X_q$ that distinguish $V_q$ and the common complements of the spaces in~$\mA_q$ are sparse.
\item Let $m \ge 2$ be an integer. By Remark~\ref{rem:expl}, the MRD codes of minimum distance~$2$ in $\F_q^{2\times m}$ are the common complements of $\qbin{2}{1}{q} \sim q$ subspaces of $\F_q^{2\times m}$ having dimension $m$, where the estimate is for $q \to +\infty$. Their asymptotic density is
$\delta_q(2 \times m, m, 2) \sim \sum_{i=0}^m (-1)^i/i! >0$ as
$q \to +\infty$; see \cite[Corollary VII.5]{antrobus2019maximal} and the discussion right after our Theorem~\ref{thm:heide}. In particular, they are not sparse.
\end{enumerate}

\end{example}

\bigskip

\section{The Density Function of Rank-Metric Codes} \label{sec:rankmetric}

In this section we apply the theory developed in the previous sections to matrix spaces over a finite field, obtaining upper and lower bounds for the density functions of MRD codes.
By computing the limit as $q \to +\infty$ in these bounds we then 
solve Problem~\ref{pb:b}, stated in the introduction of this paper. In particular,
 we prove that MRD codes in $\mat$ of minimum distance $d$ are sparse
unless $d=1$ or $d=n=2$.

Before presenting the main theorems of this section and their proofs, we briefly survey the current literature 
connected to Problem~\ref{pb:b}.
This will also serve to put our results in the context of previous work.

\begin{notation}
For ease of exposition, throughout this section
we work with fixed integers~$m$,~$n$ and~$d$ with $m \ge n \ge 2$ and $1 \le d \le n$.
\end{notation}

The density limit considered in Problem~\ref{pb:b} has been 
studied in~\cite{byrne2020partition,antrobus2019maximal,gluesing2020sparseness}, showing in particular that
$$\limsup_{q \to +\infty} \, \delta(n \times m, m(n-d+1), d) <1 \mbox{ whenever $d \ge 2$}.$$
This result appears quite surprising when thinking 
of MRD codes as the rank-metric analogues of MDS codes in the Hamming metric, which are classically known to be dense.
It turns out that the approach developed in this paper provides a clear explanation for the divergence in the behavior of these two classes of codes; see Remark~\ref{rem:dive} below. 

The methods used in \cite{byrne2020partition}, \cite{antrobus2019maximal} and
\cite{gluesing2020sparseness} are very different from each other. The approach of
\cite{byrne2020partition} uses a  combinatorial machinery based on families of codes that are \textit{balanced} with respect to a given partition of the ambient space, leading to the following result.

\begin{theorem}[\text{see \cite[Corollary 6.2]{byrne2020partition}}] \label{thm:ravby} If $d \ge 2$, then
\begin{align*}
    \limsup_{q \to +\infty}\, \delta_q(n \times m, m(n-d+1), d) \le 1/2.
\end{align*}
\end{theorem}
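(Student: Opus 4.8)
The plan is to recover this bound from the asymptotic machinery of Section~\ref{sec:asy}, by viewing MRD codes as common complements as in Remark~\ref{rem:expl}. First I would reduce the statement to a distinguishing problem. Fix $d \ge 2$ and set $k := m(n-d+1)$. If $\mC \le \mat$ satisfies $\dim(\mC) = k$ and $\drk(\mC) \ge d$, then the Singleton-like bound (Theorem~\ref{thm:slb}) forces $\drk(\mC) = d$, so $\mC$ is MRD; conversely every MRD code of minimum distance $d$ has dimension $k$. By Remark~\ref{rem:expl}, these codes are precisely the $k$-subspaces of $\mat$ distinguishing the cone $K_q \subseteq \mat$ of all matrices of rank at most $d-1$ (equivalently, the common complements of $\mA_q = \{\mat(U) \mid U \le \F_q^n, \ \dim(U) = d-1\}$). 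Hence, in the notation of Definition~\ref{def:delta},
$$\delta_q(n\times m, m(n-d+1), d) = \frac{|\mF'_q|}{\qbin{mn}{k}{q}},$$
where $\mF'_q$ is the collection of $k$-subspaces of $\mat$ distinguishing $K_q$. This is exactly the quantity bounded in Theorem~\ref{thm:asyv}\ref{thm:asyv2} and Proposition~\ref{prop:onehalf}, with ambient dimension $N := mn$, so that $N-k = m(d-1)$ and $N-k+1 = m(d-1)+1$; note also $|K_q| \ge q$.

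Next I would pin down the growth of $|K_q|$. Counting matrices by rank gives $|K_q| = \sum_{i=0}^{d-1}\qbin{n}{i}{q}\prod_{j=0}^{i-1}(q^m-q^j)$, and since $d-1 \le n-1 < (m+n)/2$ the dominant summand is the one with $i = d-1$, so $|K_q| \sim q^{(d-1)(m+n-d+1)}$ as $q \to +\infty$. Writing $(d-1)(m+n-d+1) = \big(m(d-1)+1\big) + \big((d-1)(n-d+1)-1\big)$ and using $d \ge 2$, $n \ge d$, the extra exponent $(d-1)(n-d+1)-1$ is nonnegative and vanishes precisely when $d = n = 2$. This splits the argument in two. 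If $(n,d) \ne (2,2)$, then $q^{N-k+1} \in o(|K_q|)$, so Theorem~\ref{thm:asyv}\ref{thm:asyv2} gives $\lim_{q\to+\infty} |\mF'_q|/\qbin{mn}{k}{q} = 0 \le 1/2$. If $d = n = 2$, then $k = m$, $N-k+1 = m+1$, and a direct count gives $|K_q| = 1 + (q+1)(q^m-1) \sim q^{m+1} = q^{N-k+1}$, i.e.\ $K_q \sim \gamma\, q^{N-k+1}$ with $\gamma = 1$; Proposition~\ref{prop:onehalf} then yields $\limsup_{q\to+\infty}|\mF'_q|/\qbin{mn}{k}{q} \le 1/(\gamma+1) = 1/2$. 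In both cases $\limsup_{q\to+\infty}\delta_q(n\times m, m(n-d+1), d) \le 1/2$, which proves the theorem.

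I expect the only genuine work to be the asymptotic analysis of $|K_q|$ — in particular, extracting the exact leading coefficient $\gamma = 1$ in the borderline case $d = n = 2$ (not merely the order of magnitude), since the value $1/2$ in the statement comes straight from this constant via Proposition~\ref{prop:onehalf}. Everything else — the reduction through Remark~\ref{rem:expl}, the trivial sharpening of the Singleton-like bound, and the verification $|K_q| \ge q$ — is routine. As a sanity check, the argument simultaneously shows that the value $1/2$ is approached only when $d = n = 2$, the density tending to $0$ for every other $d \ge 2$, which is consistent with the discussion preceding Theorem~\ref{sparseness} in the introduction.
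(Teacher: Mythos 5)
Your proof is correct, but note that the paper does not prove this statement at all: Theorem~\ref{thm:ravby} is imported verbatim from~\cite[Corollary 6.2]{byrne2020partition}, where it is obtained by the ``balanced families with respect to a partition'' technique, which has nothing to do with the machinery of Sections~\ref{sec:bounds}--\ref{sec:asy}. What you give is therefore an internal rederivation from the paper's own results, and it checks out: the reduction via Remark~\ref{rem:expl} (with the Singleton-like bound upgrading $\drk\ge d$ to MRD), the identification $\delta_q(n\times m,k,d)=|\mF_q'|/\qbin{mn}{k}{q}$ with $K_q=\{M:\rk(M)\le d-1\}$, the estimate $|K_q|\sim q^{(d-1)(m+n-d+1)}$ (which is exactly~\eqref{asball}), the dichotomy governed by $(d-1)(n-d+1)-1\ge 0$ with equality only at $n=d=2$, and the exact constant $\gamma=1$ in the borderline case $|K_q|=1+(q+1)(q^m-1)\sim q^{m+1}$ feeding Proposition~\ref{prop:onehalf} to get $1/(\gamma+1)=1/2$. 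There is no circularity, since Theorem~\ref{thm:asyv} and Proposition~\ref{prop:onehalf} rest only on Sections~\ref{sec:counting}--\ref{sec:asy}; the one soft spot is that Proposition~\ref{prop:onehalf} is stated with its proof omitted, so your argument inherits that gap from the paper rather than closing it. In fact, the paper itself gestures at precisely your borderline argument in the remark following Theorem~\ref{thm:mrdboundv}, and its Theorem~\ref{sparseness} strengthens the non-borderline half of your dichotomy to actual sparseness. Comparing the two routes: the cited proof of~\cite{byrne2020partition} is self-contained and independent of complement-counting, while yours shows the $1/2$ bound falls out of the cone-distinguishing asymptotics with almost no extra work and simultaneously explains why $1/2$ can only be approached when $n=d=2$.
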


A sharper bound is obtained 
in \cite{antrobus2019maximal} using the theory of
spectrum-free matrices, combined with a probability argument.
The result reads as follows.

\begin{theorem}[\text{see \cite[Theorem VII.6]{antrobus2019maximal}}] \label{thm:heide}
We have
\begin{align*}
    \limsup_{q \to +\infty} \, \delta_{q}(n \times m, m(n-d+1), d) \le \left(\sum_{i=0}^{m}\frac{(-1)^i}{i!}\right)^{(d-1)(n-d+1)}.
\end{align*}
\end{theorem}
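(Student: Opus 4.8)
The plan is to show that being MRD forces $(d-1)(n-d+1)$ \emph{independent} ``no eigenvalue in $\F_q$'' conditions, each of which survives with a proportion tending to $\sum_{i=0}^{m}(-1)^i/i!$ as $q\to+\infty$. Put $e:=n-d+1$, so an MRD code of minimum distance $d$ has dimension $me$, and fix the subspace $U_0=\langle e_1,\dots,e_{d-1}\rangle\le\F_q^n$. Since every nonzero element of $\mat(U_0)$ has rank at most $d-1$ (its column space lies in $U_0$), any MRD code $\mC\le\mat$ of minimum distance $d$ intersects $\mat(U_0)$ trivially; as $\dim\mC+\dim\mat(U_0)=me+m(d-1)=mn$, this already gives $\mC\oplus\mat(U_0)=\mat$ (compare Remark~\ref{rem:expl}). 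So it suffices to bound the number of complements of $\mat(U_0)$ that, in addition, avoid every nonzero matrix of rank less than $d$.

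First I would parametrize the complements of $\mat(U_0)$. Projecting a matrix onto its last $e$ rows gives a bijection between these complements and the linear maps $g\colon\F_q^{e\times m}\to\F_q^{(d-1)\times m}$, via $\mC=\{\binom{g(Z)}{Z}:Z\in\F_q^{e\times m}\}$, where $\binom{\,\cdot\,}{\,\cdot\,}$ stacks the top $d-1$ rows over the bottom $e$ rows. Writing the rows of $Z$ as $z_1,\dots,z_e$ and $g(Z)_j=\sum_{i=1}^{e}z_iB_{ij}$, the map $g$ is encoded by the $(d-1)e$ blocks $B_{ij}\in\F_q^{m\times m}$ with $1\le i\le e$ and $1\le j\le d-1$, and these blocks vary \emph{independently} over $\F_q^{m\times m}$ as $\mC$ ranges over the complements of $\mat(U_0)$. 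Now plug into the code the matrix $Z$ whose $i$-th row equals $y\ne0$ and whose other rows vanish: the resulting codeword is nonzero and has exactly the $d$ nonzero rows $yB_{i1},\dots,yB_{i,d-1},y$, so if $\mC$ is MRD it has rank $d$ and those $d$ rows are linearly independent; in particular $yB_{ij}\notin\langle y\rangle$ for every nonzero $y$, i.e.\ $B_{ij}$ has no eigenvalue in $\F_q$. Hence
\begin{equation*}
\#\{\text{MRD codes in }\mat\text{ of minimum distance }d\}\ \le\ \bigl(\,\#\{A\in\F_q^{m\times m}:A\text{ has no eigenvalue in }\F_q\}\,\bigr)^{(d-1)(n-d+1)}.
\end{equation*}

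To turn this into a density estimate I would identify the quantity in parentheses with one already recorded in the excerpt. Specialising the previous computation to $n=2$ shows that a code of minimum distance $2$ in $\F_q^{2\times m}$ is exactly a complement of $\mat(\langle e_1\rangle)$ whose single defining block $A$ has no eigenvalue in $\F_q$, so $\#\{A\text{ eigenvalue-free}\}=\delta_q(2\times m,m,2)\cdot\qbin{2m}{m}{q}$. Dividing the displayed inequality by $\qbin{mn}{me}{q}$ and using $\qbin{2m}{m}{q}\sim q^{m^2}$ together with $\qbin{mn}{me}{q}\sim q^{me(mn-me)}=q^{(d-1)(n-d+1)m^2}$, every power of $q$ cancels; passing to the limit (which exists for $\delta_q(2\times m,m,2)$ by~\cite[Corollary~VII.5]{antrobus2019maximal}) gives
\begin{equation*}
\limsup_{q\to+\infty}\,\delta_q\bigl(n\times m,\,m(n-d+1),\,d\bigr)\ \le\ \Bigl(\lim_{q\to+\infty}\delta_q(2\times m,m,2)\Bigr)^{(d-1)(n-d+1)}=\Bigl(\sum_{i=0}^{m}\frac{(-1)^i}{i!}\Bigr)^{(d-1)(n-d+1)}.
\end{equation*}

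The linear-algebra reduction to the $(d-1)(n-d+1)$ eigenvalue-free blocks is elementary; the one step that needs real work (or the citation above) is the asymptotics $\#\{A\in\F_q^{m\times m}:A\text{ eigenvalue-free}\}\sim q^{m^2}\sum_{i=0}^{m}(-1)^i/i!$. I would obtain this by sorting $m\times m$ matrices according to the multiset of irreducible factors of their characteristic polynomial and running a Möbius/inclusion--exclusion count over the presence of degree-$1$ factors (equivalently, the exponential-formula bookkeeping of rational canonical forms), whose $q\to+\infty$ limit is the truncated derangement series. I expect this counting, rather than the reduction, to be the main obstacle.
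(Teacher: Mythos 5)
Your argument is correct, but note that the paper never proves Theorem~\ref{thm:heide} itself: it is quoted from~\cite{antrobus2019maximal}, and the closest the paper comes to a proof is Section~\ref{sec:prop}, where the same asymptotic bound is recovered by combining Lemma~\ref{lem:basis} with the shortening-type decomposition of Proposition~\ref{prop:decomposition}, iterated and paired with duality (Proposition~\ref{prop:dual}) to yield Corollary~\ref{cor:specbas}, and finally with the $2\times m$ limit of~\cite[Corollary~VII.5]{antrobus2019maximal}. Your proof reaches exactly the inequality of Corollary~\ref{cor:specbas} in one step: after observing (as in Remark~\ref{rem:expl} and Lemma~\ref{lem:basis}) that an MRD code is the graph of a linear map determined by $(d-1)(n-d+1)$ independent blocks $B_{ij}\in\F_q^{m\times m}$, you impose the eigenvalue-free condition on each block directly, identify the count of eigenvalue-free matrices with $\delta_q(2\times m,m,2)\,\qbin{2m}{m}{q}$, and let the $q$-binomial asymptotics cancel; this is in substance the ``spectrum-free matrices'' route of the cited source, and it buys a shorter, purely linear-algebraic derivation that avoids the iterated decomposition and the duality step, at the cost of being tailored to this particular bound rather than producing the intermediate relations (Propositions~\ref{prop:decomposition} and~\ref{prop:dual}) that the paper wants for their own sake. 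Two small points: your phrase ``exactly the $d$ nonzero rows'' should be ``at most these $d$ rows are nonzero'' (the independence, hence nonvanishing, only follows once rank $d$ is forced), and your use of~\cite[Corollary~VII.5]{antrobus2019maximal} is not circular here since the paper itself treats that corollary as independent input (e.g.\ in the proof of Theorem~\ref{sparseness}); your sketched inclusion--exclusion count of eigenvalue-free matrices, with limit the truncated series $\sum_{i=0}^m(-1)^i/i!$, is the standard correct way to make the argument self-contained.
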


In \cite{antrobus2019maximal} it is also shown that the bound of
Theorem~\ref{thm:heide} is sharp
whenever $d=n=2$ (and for arbitrary $m \ge 2$).
This means that, in general, $\MRD$ codes are neither sparse, nor dense.

Finally, in \cite{gluesing2020sparseness} the exact density of MRD codes with parameters $m=n=d=3$ is computed, showing that these $3 \times 3$ codes are sparse. The approach of~\cite{gluesing2020sparseness} is based on 
an original argument that connects full-rank square MRD codes with the theory of semifields.

\begin{theorem}[\text{see \cite[Theorem 2.4]{gluesing2020sparseness}}]
We have 
\begin{align*}
    \delta_q(3 \times 3,3,3) = \frac{(q-1)\, (q^3-1)\, (q^3-q)^3\, (q^3-q^2)^2\, (q^3-q^2-q-1)}{3\, (q^7-1)\, (q^9-1)\, (q^9-q)}.
\end{align*}
In particular,
$\lim_{q \to +\infty} \, \delta_q(3 \times 3, 3,3) = 0$.
\end{theorem}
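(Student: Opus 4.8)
The plan is to reduce the statement to an exact enumeration and then invoke the classification of three-dimensional finite semifields. Writing $\mathcal{N}_q$ for the set of $3$-dimensional $\F_q$-subspaces $\mathcal{C}\le\F_q^{3\times 3}$ with $\drk(\mathcal{C})\ge 3$, i.e.\ with every nonzero element of $\mathcal{C}$ invertible, Definition~\ref{def:delta} gives $\delta_q(3\times 3,3,3)=|\mathcal{N}_q|/\qbin{9}{3}{q}$, so it suffices to prove $|\mathcal{N}_q|=\tfrac13\,q^{6}(q-1)^{4}(q+1)^{2}(q^{3}-q^{2}-q-1)$; one verifies by expanding $\delta_q(3\times3,3,3)\cdot\qbin{9}{3}{q}$ and cancelling the common factors (using $q^{9}-q^{2}=q^{2}(q^{7}-1)$) that this polynomial is exactly the right-hand side of the claimed formula times $\qbin{9}{3}{q}$. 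To count $\mathcal{N}_q$ I would use the standard dictionary between full-rank square MRD codes and finite semifields: for any $0\neq M_0\in\mathcal{C}$ the subspace $M_0^{-1}\mathcal{C}$ contains the identity matrix and still satisfies $M_0^{-1}\mathcal{C}\setminus\{0\}\subseteq\mathrm{GL}_3(\F_q)$, and such a subspace is the spread set of a presemifield of order $q^3$ with $\F_q$ inside the appropriate nucleus; conversely the group $G$ generated by the maps $\mathcal{C}\mapsto A\mathcal{C}B$ (for $A,B\in\mathrm{GL}_3(\F_q)$) and $\mathcal{C}\mapsto\mathcal{C}^{\mathsf{T}}$ acts on $\mathcal{N}_q$, with orbits corresponding to isotopism classes of these semifields (up to transposition). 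Hence $|\mathcal{N}_q|=\sum_{[\mathcal{C}]}|G|/|\mathrm{Stab}_G(\mathcal{C})|$, the sum ranging over isotopism classes.

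Next I would invoke Menichetti's classification of finite semifields that are three-dimensional over a field $\F_q$ contained in their nucleus: up to isotopy these are precisely $\F_{q^3}$ and Albert's \emph{generalized twisted fields} $x\circ y=xy-c\,x^{q^{s}}y^{q^{t}}$ with $s,t\in\{1,2\}$ and $c$ running over those nonzero elements of $\F_{q^3}$ not of the form $u^{q^{s}-1}v^{q^{t}-1}$ (the excluded set being the unique subgroup of $\F_{q^3}^\ast$ of order $q^2+q+1$). Counting admissible parameters $c$, including $c=0$ which recovers the field, gives $1+\bigl((q^{3}-1)-(q^{2}+q+1)\bigr)=q^{3}-q^{2}-q-1$, explaining the last factor in the target formula and suggesting the decomposition of $\mathcal{N}_q$ into a "field part" and a "twisted part" indexed by these $c$.

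For the stabilisers: the field $\F_{q^3}$, realised as the regular matrix representation, has stabiliser in $G$ built from the normaliser of a Singer torus in $\mathrm{GL}_3(\F_q)$, of order $3(q^{3}-1)$ — this produces the factor $3$ in the denominator of $\delta_q(3\times3,3,3)$ together with the factors comprising $|\mathrm{GL}_3(\F_q)|=(q^{3}-1)(q^{3}-q)(q^{3}-q^{2})$. For a generalized twisted field the autotopism group is larger and its order depends on arithmetic data ($\gcd(3,q-1)$, whether $\operatorname{char}\F_q=3$, whether $s=t$); one computes it explicitly, determines which triples $(s,t,c)$ yield isotopic semifields, i.e.\ the same $G$-orbit, sums $|G|/|\mathrm{Stab}_G(\mathcal{C})|$ over the resulting classes, and simplifies to the stated closed form. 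The limit statement is then immediate: $|\mathcal{N}_q|$ is $\tfrac13$ times a monic integer polynomial of degree $15$ in $q$, while $\qbin{9}{3}{q}\sim q^{18}$ by~\eqref{eq:qbin}, so $\delta_q(3\times 3,3,3)\sim\tfrac{1}{3}\,q^{-3}\to 0$ as $q\to+\infty$.

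I expect the main obstacle to be the autotopism-group computation for the generalized twisted field of order $q^3$ together with the attendant bookkeeping: the order of this group genuinely branches on divisibility conditions, forcing several cases, and one must be careful not to over- or under-count when passing among twisting parameters, semifield isotopy, transposition, and literal subspaces of $\F_q^{3\times 3}$. By contrast, the reduction to counting $\mathcal{N}_q$ and the appeal to Menichetti's theorem are comparatively routine, and the final rational-function simplification, although lengthy, is mechanical.
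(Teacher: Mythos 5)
The paper offers no proof of this statement—it is quoted directly from \cite[Theorem 2.4]{gluesing2020sparseness}, whose argument is exactly the route you outline: viewing full-rank $3$-dimensional codes in $\F_q^{3\times 3}$ as semifield spread sets, invoking Menichetti's classification (field plus generalized twisted fields), and counting orbits via autotopism groups. Your reduction of the claimed formula to $|\mathcal{N}_q|=\tfrac13\,q^{6}(q-1)^{4}(q+1)^{2}(q^{3}-q^{2}-q-1)$ and the consequent estimate $\delta_q(3\times 3,3,3)\sim\tfrac13 q^{-3}\to 0$ are correct, so, modulo the deferred autotopism/isotopy bookkeeping (which is precisely the technical content of the cited work), your proposal follows essentially the same approach as the source the paper relies on.
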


In this paper we approach Problem~\ref{pb:b} from a different viewpoint,
which allows us to obtain
sharper bounds for the density function of MRD codes.
As an application, we conclude that MRD codes are  sparse as $q \to +\infty$, unless $d=1$ or $n=d=2$. We therefore show that the non-sparseness result of~\cite{antrobus2019maximal} for $d=n=2$ is the only non-trivial exception to a general ``sparseness behavior''.

We start with 
an upper bound on the density of MRD codes, which is the main result of this section.
In the statement, we will need the following quantity.

\begin{notation} \label{notat:theta}
For a prime power $q$ and non-negative integers
$u$ and $i$ with $u \le n$ and $2u-n \le i \le u$, we let $\theta_q(n,u,i)$ denote the number of pairs $(U,U')$
of $u$-spaces
 $U,U' \le \F_q^n$ with the property that 
$\dim(U \cap U')=i$. 
\end{notation}

The next lemma gives a closed expression for $\theta_q(n,u,i)$. 

\begin{lemma} \label{lem:theta}
Let $q$, $u$ and $i$ be as in Notation~\ref{notat:theta}.
We have
$$\theta_q(n,u,i) =  \sum_{j=i}^u (-1)^{j-i} q^{\binom{j-i}{2}} \, \qbin{n}{i}{q}
\, \qbin{n-i}{j-i}{q} \, \qbin{n-j}{u-j}{q}^2.$$
\end{lemma}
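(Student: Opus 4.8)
The plan is to compute, in two different ways and for each $j$ in the range $i \le j \le u$, the number of triples $(W,U,U')$ in which $W$ is a $j$-dimensional subspace of $\F_q^n$ contained in both of the $u$-spaces $U$ and $U'$, and then to recover $\theta_q(n,u,i)$ by $q$-binomial inversion. Concretely, set
$$g_q(n,u,j) := |\{(W,U,U') \mid \dim W = j,\ \dim U = \dim U' = u,\ W \le U,\ W \le U'\}|.$$
Choosing $W$ first, there are $\qbin{n}{j}{q}$ choices for $W$, and then any $u$-space containing $W$ is an admissible $U$ (resp. $U'$), so there are $\qbin{n-j}{u-j}{q}$ choices for each; hence $g_q(n,u,j) = \qbin{n}{j}{q}\,\qbin{n-j}{u-j}{q}^2$. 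On the other hand, grouping the triples by the pair $(U,U')$: if $\dim(U \cap U') = t$, the number of $j$-subspaces $W \le U \cap U'$ is $\qbin{t}{j}{q}$, which vanishes unless $t \ge j$. Summing over the possible values $t$ (which range in $\max\{j,2u-n\} \le t \le u$) gives
$$g_q(n,u,j) = \sum_{t=j}^{u} \theta_q(n,u,t)\,\qbin{t}{j}{q}.$$

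Next I would invert this relation. By the $q$-analogue of binomial (Möbius) inversion over the lattice of subspaces — see e.g.~\cite[Section 3]{stanley1971modular} or standard references on $q$-series — the identity $g_q(n,u,j) = \sum_{t \ge j} \qbin{t}{j}{q}\,\theta_q(n,u,t)$ yields
$$\theta_q(n,u,i) = \sum_{j=i}^{u} (-1)^{j-i}\, q^{\binom{j-i}{2}}\,\qbin{j}{i}{q}\,g_q(n,u,j) = \sum_{j=i}^{u} (-1)^{j-i}\, q^{\binom{j-i}{2}}\,\qbin{j}{i}{q}\,\qbin{n}{j}{q}\,\qbin{n-j}{u-j}{q}^2.$$
Finally, I would apply the flag identity $\qbin{j}{i}{q}\,\qbin{n}{j}{q} = \qbin{n}{i}{q}\,\qbin{n-i}{j-i}{q}$ — both sides count pairs $W \le V \le \F_q^n$ with $\dim W = i$ and $\dim V = j$ — to rewrite the summand and obtain exactly the claimed expression.

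The steps are all routine; the only point requiring some care is invoking $q$-binomial inversion in the correct form and keeping the summation ranges consistent (in particular that terms outside $\max\{i,2u-n\} \le j \le u$ contribute zero, so extending or truncating the index set is harmless). One should also double-check the boundary case $i = u$, where the only surviving term is $j = u$ and the formula correctly reduces to $\theta_q(n,u,u) = \qbin{n}{u}{q}$, the count of pairs with $U = U'$.
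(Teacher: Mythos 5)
Your proof is correct and is essentially the paper's argument in a slightly different dress: the paper Möbius-inverts in the subspace lattice the relation between $f(W)=|\{(U,U'): U\cap U'=W\}|$ and $g(W)=|\{(U,U'): U\cap U'\ge W\}|=\qbin{n-\dim W}{u-\dim W}{q}^2$ and then sums over the $\qbin{n}{i}{q}$ subspaces $W$ of dimension $i$, which is exactly your triple-count identity followed by dimension-level $q$-binomial inversion. The only cosmetic difference is that the paper counts $j$-spaces $L \ge W$ directly, so the factor $\qbin{n-i}{j-i}{q}$ appears immediately and your final flag identity $\qbin{j}{i}{q}\,\qbin{n}{j}{q}=\qbin{n}{i}{q}\,\qbin{n-i}{j-i}{q}$ is not needed.
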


\begin{proof}
We will use M\"obius inversion in the lattice of subspaces of $\F_q^n$.
For a subspace $W \le \F_q^n$, let $f(W):=|\{(U,U')\mid U,U' \le \F_q^n, \,  \dim(U)=\dim(U')=u, \, U \cap U' = W\}|$.
Observe that for all $W \le \F_q^n$ we have
$$g(W):=\sum_{\substack{L \le \F_q^n \\ L \ge W}}f(L) =  \qbin{n-\dim(W)}{u-\dim(W)}{q}^2.$$
We now use the
Möbius inversion formula for the lattice of subspaces of $\F_q^n$ (see e.g. Proposition~3.7.2 and Example~3.10.2 in~\cite{stanley2011enumerative}), finding that for every
 $W \le \F_q^n$ of dimension $i$
we have 
\begin{align*}
f(W) &= \sum_{j=i}^u (-1)^{j-i} q^{\binom{j-i}{2}} \sum_{\substack{L \ge W \\ \dim(L)=j}} g(L) \\
&= \sum_{j=i}^u (-1)^{j-i} q^{\binom{j-i}{2}}
\, \qbin{n-i}{j-i}{q} \, \qbin{n-j}{u-j}{q}^2.
\end{align*}
The desired expression for $\theta_q(n,u,i)$ can be obtained by summing the previous identity over all
subspaces $W \le \F_q^n$ having dimension $i$.
\end{proof}

Our main result on the density function of MRD codes is the following. It provides an upper bound for the number of MRD codes with given parameters in terms of the quantities~$\nu$ and~$\theta$ defined/computed earlier in the paper (Notation~\ref{notat:nu} and Lemma~\ref{lem:theta}).

\begin{theorem} \label{thm:mrdboundcc}
Suppose $d \ge 2$ and let $k=m(n-d+1)$. We have \begin{equation} \label{thm:mrdboundcc1}
\delta_q(n \times m, k,d) \le 1- \frac{\qbin{n}{d-1}{q}^2 \, \nu_q(mn,k,m(d-1))^2 }{\qbin{mn}{k}{q} \, \sum_{i=0}^{d-1} \nu_q(mn,k,mi) \,  \theta_q(n,d-1,i)}.
\end{equation}
\end{theorem}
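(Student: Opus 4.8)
The plan is to obtain the bound as a direct specialization of Theorem~\ref{thm:boundscc}, via the identification of MRD codes with common complements recorded in Remark~\ref{rem:expl}. Put $X=\mat$, $N=mn$ and $k=m(n-d+1)$, let $\mU$ be the collection of $(d-1)$-subspaces of $\F_q^n$, and set $\mA=\{\mat(U)\mid U\in\mU\}$. By Remark~\ref{rem:expl} every member of $\mA$ has dimension $m(d-1)$, hence codimension $k$ in $X$, and the common complements of the spaces in $\mA$ are exactly the codes $\mC\le\mat$ with $\dim(\mC)=k$ and $\drk(\mC)\ge d$. Since $d\ge 2$ and $m\ge n\ge 2$ one checks $N\ge 4\ge 3$ and $1\le k\le N-1$, so Theorem~\ref{thm:boundscc} applies to the family $\mA$.

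First I would translate the density into a count of non-complements. Letting $\mF$ denote the set of $k$-subspaces $W\le X$ that are \emph{not} common complements of the spaces in $\mA$, Definition~\ref{def:delta} gives $\delta_q(n\times m,k,d)=1-|\mF|/\qbin{mn}{k}{q}$, so it suffices to produce a lower bound for $|\mF|$, which is precisely what the left-hand inequality of Theorem~\ref{thm:boundscc} supplies.

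The one computational step is to identify the quantities $|\mA|$ and the intersection structure of $\mA$ that enter that bound. Since a matrix lies in $\mat(U)\cap\mat(U')$ exactly when its column space is contained in $U\cap U'$, we have $\mat(U)\cap\mat(U')=\mat(U\cap U')$, of dimension $m\dim(U\cap U')$; moreover $U\mapsto\mat(U)$ is a bijection $\mU\to\mA$ (the column span of $\mat(U)$ recovers $U$). Hence $|\mA|=\qbin{n}{d-1}{q}$, and for every $\ell$ the number of ordered pairs $(A,A')\in\mA^2$ with $\dim(A\cap A')=\ell$ equals $\theta_q(n,d-1,i)$ when $\ell=mi$ with $0\le i\le d-1$, and is $0$ otherwise. (Recall $\theta_q(n,d-1,i)=0$ when $i<2(d-1)-n$, since two $(d-1)$-spaces of $\F_q^n$ meet in dimension at least $2(d-1)-n$; such indices contribute nothing, so running the sum from $i=0$ to $d-1$ is harmless even though $\nu_q(mn,k,mi)$ is only defined for those $i$ in the range of Notation~\ref{notat:nu}.) Using $N-k=m(d-1)$ this gives
$$\sum_{\ell=0}^{N-k}\nu_q(N,k,\ell)\,\bigl|\{(A,A')\in\mA^2\mid\dim(A\cap A')=\ell\}\bigr|=\sum_{i=0}^{d-1}\nu_q(mn,k,mi)\,\theta_q(n,d-1,i).$$

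Finally I would assemble the pieces. The lower bound of Theorem~\ref{thm:boundscc}, with $|\mA|=\qbin{n}{d-1}{q}$ and $N-k=m(d-1)$, reads
$$|\mF|\ge\frac{\qbin{n}{d-1}{q}^2\,\nu_q(mn,k,m(d-1))^2}{\sum_{i=0}^{d-1}\nu_q(mn,k,mi)\,\theta_q(n,d-1,i)},$$
and dividing by $\qbin{mn}{k}{q}$ and substituting into $\delta_q(n\times m,k,d)=1-|\mF|/\qbin{mn}{k}{q}$ yields exactly~\eqref{thm:mrdboundcc1}. There is no genuine obstacle: the argument is a dictionary translation of Theorem~\ref{thm:boundscc}. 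The only place demanding care is the bookkeeping in the previous paragraph — verifying $\mat(U)\cap\mat(U')=\mat(U\cap U')$, the bijection $\mU\to\mA$, and the matching of intersection dimensions $mi$ with $\theta_q(n,d-1,i)$ (including the consistency of the index set with the validity range of $\nu_q$). Lemma~\ref{lem:theta} can then be invoked to make the bound fully explicit.
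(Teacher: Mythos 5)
Your proposal is correct and follows essentially the same route as the paper: it specializes the lower bound of Theorem~\ref{thm:boundscc} to the family $\mA=\{\mat(U)\mid \dim(U)=d-1\}$ from Remark~\ref{rem:expl}, using $|\mA|=\qbin{n}{d-1}{q}$ and $\dim(\mat(U)\cap\mat(U'))=m\dim(U\cap U')$ to rewrite the intersection-structure sum as $\sum_{i=0}^{d-1}\nu_q(mn,k,mi)\,\theta_q(n,d-1,i)$. Your extra bookkeeping (the bijection $\mU\to\mA$, the vanishing of $\theta_q(n,d-1,i)$ outside the admissible range of Notation~\ref{notat:nu}) only makes explicit what the paper leaves implicit.
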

\begin{proof}
For $q \in Q$, consider the collection $\mU_q$ of subspaces $U_q \le \F_q^n$ with $\dim(U_q)=d-1 \ge 1$. We follow the notation of Remark~\ref{rem:expl} and let $\mA_q=\{\mat(U_q)\mid U_q \in \mU_q\}$ for all $q \in Q$. Note that 
$|\mA_q| = \qbin{n}{d-1}{q}$
and that the MRD codes $\mC_q \le \mat$ of minimum distance $d$ are precisely the common complements of the spaces in $\mA_q$. Furthermore, 
for $U_q,U_q' \in \mU_q$ we have $$\dim(\mat(U_q)\cap\mat(U_q'))=\dim(\mat(U_q\cap U_q')) = mi$$ for some $i \in \{0,1,...,d-1\}$. Therefore,
\begin{align*}
    \sum_{\ell=0}^{mn-k}  \nu_q(mn,k,\ell) \cdot |\{(A_q,A_q') \in \mA_q^2 \mid \dim(A_q\cap A_q')=\ell\}| 
    = \sum_{i=0}^{d-1} \nu_q(mn,k,mi) \, \theta_q(n,d-1,i).
\end{align*}
The desired bound now immediately follows from Theorem~\ref{thm:boundscc}.
\end{proof}

Experimental results indicate that 
the quantity on the RHS of \eqref{thm:mrdboundcc1} is asymptotically $\smash{q^{-(d-1)(n-d+1)+1}}$ as $q \to +\infty$. Since this asymptotic estimate does not seem immediate to derive, we will obtain the sparseness of MRD codes using the concept of an asymptotic partial spread we introduced in Definition~\ref{def:asyspread}.

\begin{definition}
The \textbf{ball} of radius $0 \le r \le n$ in $\mat$ is the set of matrices $M \in \mat$ with $\rk(M) \le r$.
It is well-known that its size 
is
\begin{equation} \label{asball}
    \bbq{n \times m, r}:= \sum_{i=0}^r\qbin{n}{i}{q} \prod_{j=0}^{i-1}(q^m-q^j)\sim q^{r(m+n-r)} \quad \mbox{as $q \to +\infty$}.
\end{equation}
\end{definition}

The following result computes the asymptotic density of MRD codes as $q \to +\infty$ for all parameter sets, showing that they are (very) sparse whenever $n \ge 3$ and $d \ge 2$. This solves Problem~\ref{pb:b}.

\begin{theorem} \label{sparseness}
We have
$$\delta_q(n \times m, m(n-d+1), d) \in O\left(q^{-(d-1)(n-d+1)+1}\right) \quad \mbox{as $q \to +\infty$}.$$
Moreover, 
$$\lim_{q \to +\infty} \delta_q(n \times m, m(n-d+1),d) =
\begin{cases} 
1 &\mbox{if $d=1$,} \\
\sum_{i=0}^m \frac{(-1)^i}{i!} & \mbox{if $n=d=2$,} \\
0 & \mbox{otherwise}. 
\end{cases}$$
\end{theorem}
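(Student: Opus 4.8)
\textbf{Proof plan for Theorem~\ref{sparseness}.}
The plan is to set up a sequence of collections of subspaces as in Remark~\ref{rem:expl} and then invoke Theorem~\ref{thm:asycc}\ref{pspread} (or rather its refined form \ref{case3b}), after checking its two hypotheses: that the family is an asymptotic partial spread, and that $q \in o(|\mA_q|)$. Concretely, for each prime power $q$ I would fix $X_q = \mat$, set $d \ge 2$ and $N = mn$, $k = m(n-d+1)$, so the codimension is $m(d-1)$, and let $\mA_q = \{\mat(U_q) \mid U_q \le \F_q^n, \ \dim(U_q)=d-1\}$. By Remark~\ref{rem:expl} the common complements of $\mA_q$ are exactly the MRD codes of minimum distance $d$, so $\mF_q'$ (the $k$-spaces distinguishing every $A_q \in \mA_q$) is exactly the set of these MRD codes, and $\delta_q(n\times m, m(n-d+1), d) = |\mF_q'| / \qbin{mn}{k}{q}$.

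The first substantive step is the asymptotic partial spread condition. Here $|\mA_q| = \qbin{n}{d-1}{q} \sim q^{(d-1)(n-d+1)}$, and $\bigcup_{A_q \in \mA_q} A_q$ is precisely the ball of matrices of rank $\le d-1$ in $\mat$, which by \eqref{asball} has size $\bbq{n\times m, d-1} \sim q^{(d-1)(m+n-d+1)}$. On the other hand the "generic" size predicted by Definition~\ref{def:asyspread} is $|\mA_q|\, q^{\dim A_q} = |\mA_q|\, q^{m(d-1)} \sim q^{(d-1)(n-d+1) + m(d-1)} = q^{(d-1)(m+n-d+1)}$, which matches; so $(\mA_q)_{q\in Q}$ is an asymptotic partial spread. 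Since $m \ge n \ge 2$ and $d \ge 2$ force $(d-1)(n-d+1) \ge 1$, we have $|\mA_q| \to \infty$ faster than any constant; when $(d-1)(n-d+1) \ge 2$ we even get $q \in o(|\mA_q|)$, so part \ref{pspread} already yields $\delta_q \in O(q/|\mA_q|) = O(q^{-(d-1)(n-d+1)+1}) \to 0$. The remaining cases where $(d-1)(n-d+1) = 1$, i.e. $d = 2$ with $n = 2$, fall outside the hypothesis $q \in o(|\mA_q|)$ and must be handled separately: when $n = d = 2$ one cites \cite[Corollary~VII.5]{antrobus2019maximal} (see also Theorem~\ref{thm:heide} and the discussion after it) for the limit $\sum_{i=0}^m (-1)^i/i!$, and the bound $\delta_q \in O(1) = O(q^{-(d-1)(n-d+1)+1})$ is then trivially true; the case $d = 1$ is immediate since the only code of dimension $mn$ is all of $\mat$, so $\delta_q = 1$ identically.

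For the sharper $O(q^{-(d-1)(n-d+1)+1})$ estimate when $n \ge 3$, $d \ge 2$, part \ref{pspread} of Theorem~\ref{thm:asycc} gives it directly once $q \in o(|\mA_q|)$; to be safe across all such parameter sets I would, if needed, instead apply part (3) by verifying the intersection bound \eqref{maxx}. For $U_q, U_q' \le \F_q^n$ of dimension $d-1$ with $U_q \ne U_q'$ one has $\dim(U_q \cap U_q') \le d-2$, hence $\dim(\mat(U_q) \cap \mat(U_q')) = m \dim(U_q \cap U_q') \le m(d-2)$, so \eqref{maxx} holds with $\ell = m(d-2) = N - 2k$ exactly when $mn - 2m(n-d+1) = m(2d-n-2)$... one checks $\max\{0, N-2k\} = \max\{0, m(2(d-1)-n)\}$, and compares with $\ell = m(d-2)$; feeding this into case~\ref{case3a} or \ref{case3b} as appropriate again produces $O(q/|\mA_q|) = O(q^{-(d-1)(n-d+1)+1})$ (the extra term $q^{-N+k+\ell+1}$ in \ref{case3b} is of the same order or smaller). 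Assembling the three regimes $d=1$, $n=d=2$, and "otherwise" gives the piecewise limit in the statement.

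I expect the main obstacle to be the bookkeeping in the last paragraph: correctly identifying which branch of Theorem~\ref{thm:asycc} applies for each $(n,m,d)$ — in particular pinning down $\max\{0, N-2k\}$ versus $\ell = m(d-2)$ and confirming that the resulting error term never dominates $q^{-(d-1)(n-d+1)+1}$ — and making sure the boundary case $n=d=2$ (where the hypotheses of Theorem~\ref{thm:asycc} genuinely fail and an external input is required) is cleanly separated from the generic argument.
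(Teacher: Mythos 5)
Your proposal is correct and follows essentially the same route as the paper: identify MRD codes as common complements of the family $\mA_q=\{\mat(U)\mid \dim(U)=d-1\}$, verify via the ball-size estimate \eqref{asball} that this family is an asymptotic partial spread, apply Theorem~\ref{thm:asycc}\ref{pspread}, handle $d=1$ trivially and $n=d=2$ by citing \cite{antrobus2019maximal}. The only simplification worth noting is that the big-O conclusion of part~\ref{pspread} does not require $q\in o(|\mA_q|)$ (only the limit-zero statement does), so your fallback through part~(3) and the $\ell$-bookkeeping is unnecessary.
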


\begin{proof}
The statement immediately follows from the definitions if $d=1$. We henceforth assume $d \ge 2$. For $q \in Q$, denote by $\mA_q$ the family in the proof of Theorem~\ref{thm:mrdboundcc}. We have
    $\smash{|\mA_q| = \qbin{n}{d-1}{q}\sim q^{(d-1)(n-d+1)}}$ as $q \to +\infty$. Since all the spaces in $\mA_q$ have dimension $m(d-1)$ and 
    $\bbq{n \times m, d} \sim q^{(d-1)(m+n-d+1)}$ as
    $q \to +\infty$ by~\eqref{asball}, this shows that
    $(\mA_q)_{q \in Q}$ is an asymptotic partial spread; see Definition~\ref{def:asyspread}. Therefore the first part of the statement follows from Theorem~\ref{thm:asycc}.
In particular, the density limit is zero whenever 
$n \ge 3$ and $d\ge 2$.
The limit for $n=d=2$ has already been computed in \cite[Proposition VII.5]{antrobus2019maximal}.
\end{proof}

\begin{remark}
In~\cite[Theorem 2.4]{gluesing2020sparseness} it was shown that $\delta_q(3 \times 3,3,3) \sim \frac{1}{3}q^{-3}$ as $q \to +\infty$. On the other hand, Theorem~\ref{sparseness} gives that $\delta_q(3 \times 3,3,3) \in O(q^{-1})$ as $q \to +\infty$. This shows that
our asymptotic bound is not sharp in general.
\end{remark}

We now turn to explaining why MDS and MRD codes behave so differently with respect to density properties.

\begin{remark} \label{rem:dive}
The approach developed in this paper offers an explanation for why MDS and MRD codes exhibit 
different behaviors with respect to sparseness and density.
Recall that, for $1 \le k \le n$, a \textbf{$k$-MDS code} is a $k$-dimensional subspace $C \le \F_q^n$ that 
does not contain any non-zero vector of Hamming weight strictly smaller than 
$n-k+1$; see~\cite[Chapter 11]{macwilliams1977theory}. For a subset $S \subseteq \{1,...,n\}$, let $\F_q^n(S) \le \F_q^n$ denote the space of vectors $x \in \F_q^n$ with $x_i=0$ for all $i \notin S$.
Then $k$-MDS codes can be seen as the common complements of the spaces of the form $\F_q^n(S)$, where $S \subseteq \{1,...,n\}$ has size $n-k$. The number of such spaces is~$\smash{\binom{n}{k}}$, which is negligible with respect to $q$ as $q \to +\infty$.  We can therefore use Theorem~\ref{thm:asycc} to explain why MDS codes are dense as the field size tends to infinity: They are the common complements of a  collection of subspaces whose cardinality is negligible with respect to the field size.

For the case of MRD codes the situation is exactly the opposite. As Remark~\ref{rem:expl} shows, MRD codes are the common complements of a collection of subspaces that form an asymptotic partial spread and whose cardinality, for $n \ge 3$ and $d\ge 2$, is far from being negligible with respect to the field size $q$ as $q \to +\infty$.
In particular, they must be sparse by Theorem~\ref{thm:asycc}.
\end{remark}

Combining Theorem~\ref{thm:boundsv}, Theorem~\ref{thm:asyv}, and the estimate in~\eqref{asball} we can also study the density function of rank-metric codes for any minimum distance and any dimension (not just MRD codes).
In the next result we give upper and lower bounds for this function and their
asymptotic versions as $q$ tends to infinity.
For the case of MRD codes, we find that the bounds one obtains are in general worse than the ones given in Theorem~\ref{thm:mrdboundcc}; see Figure~\ref{fig:comparison}, which reflects the general behavior we observed.

\begin{theorem} \label{thm:mrdboundv}
For all $2 \le d \le n$ and $1\le k \le mn$ we have

\smallskip

\begin{align*}
1-\delta_q(n \times m, k, d) &\le \displaystyle \frac{\displaystyle
(\bbq{n \times m,d-1}-1) \,  \qbin{mn-1}{k-1}{q}}{(q-1) \, \qbin{mn}{k}{q}}, \\[0.3cm]
\delta_q(n \times m, k, d) &\le 1 - \displaystyle\frac{\displaystyle\left(\bbq{n \times m,d-1}-1\right)\,\displaystyle\left(\frac{q^{k}-1}{q^{mn}-1}\right)}{(q-1)+\displaystyle\left(\bbq{n \times m, d-1}-q\right)\displaystyle\left(\frac{q^{k-1}-1}{q^{mn-1}-1}\right)}.
\end{align*}
In particular, 
\begin{align*}
1- \delta_q(n \times m, k,d) &\in O\left(q^{(d-1)(m+n-d+1)-(mn-k)-1}\right) \quad \mbox{as $q \to +\infty$}, \\
   \delta_q(n \times m, k,d) &\in
        O\left(q^{-(d-1)(m+n-d+1)+(mn-k)+1}\right) \quad \mbox{as $q \to +\infty$}.
\end{align*}
Therefore,
$$\lim_{q \to + \infty} \delta_q(n \times m,k,d) = 
\begin{cases} 1 & \mbox{if  $(d-1)(m+n-d+1)\le mn-k$}, \\
0 & \mbox{if $(d-1)(m+n-d+1) \ge mn-k+2$}.
\end{cases}$$
\end{theorem}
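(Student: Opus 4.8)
The plan is to derive everything from the non-asymptotic bounds in Theorem~\ref{thm:boundsv} by specializing the cone to the rank-metric ball and then passing to the limit. First I would set $X_q = \F_q^{n \times m}$, which has dimension $N = mn \ge 3$, and take $K_q$ to be the ball $\BallH$ of radius $d-1$, i.e. the set of matrices of rank at most $d-1$. By the Singleton-like bound and the definition of minimum distance, a $k$-dimensional code $\mC \le \F_q^{n\times m}$ has $\drk(\mC) \ge d$ exactly when $\mC$ intersects this ball only in $\{0\}$, i.e. when $\mC$ is one of the $k$-subspaces distinguishing $K_q$. Hence $1 - \delta_q(n \times m, k, d)$ equals $|\mF|/\qbin{mn}{k}{q}$ in the notation of Theorem~\ref{thm:boundsv}, and $\delta_q(n\times m,k,d) = |\mF'|/\qbin{mn}{k}{q}$. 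The hypothesis $|K| = \bbq{n\times m, d-1} \ge q$ needed in Theorem~\ref{thm:boundsv} holds once $d \ge 2$ and $q$ is not too small (and for the asymptotic statements only large $q$ matters), so this is a mild point to check.

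Next I would substitute $N = mn$ and $|K| = \bbq{n\times m,d-1}$ directly into the two inequalities of Theorem~\ref{thm:boundsv}. The upper bound on $|\mF|$ gives, after dividing by $\qbin{mn}{k}{q}$,
\begin{equation*}
1 - \delta_q(n\times m,k,d) = \frac{|\mF|}{\qbin{mn}{k}{q}} \le \frac{(\bbq{n\times m,d-1}-1)\,\qbin{mn-1}{k-1}{q}}{(q-1)\,\qbin{mn}{k}{q}},
\end{equation*}
which is the first displayed inequality. The lower bound on $|\mF|$, together with $|\mF| + |\mF'| = \qbin{mn}{k}{q}$, gives $\delta_q = |\mF'|/\qbin{mn}{k}{q} = 1 - |\mF|/\qbin{mn}{k}{q} \le 1 - (\text{lower bound})/\qbin{mn}{k}{q}$; simplifying the resulting fraction (using $\qbin{mn-1}{k-1}{q}/\qbin{mn}{k}{q} = (q^k-1)/(q^{mn}-1)$) yields the second displayed inequality, exactly as in the proof of Theorem~\ref{thm:asyv}. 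These are purely algebraic manipulations of $q$-binomials, the same kind already carried out in the proof of Theorem~\ref{thm:boundsv}.

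For the asymptotic part I would invoke Theorem~\ref{thm:asyv} with $N = mn$ and $|K_q| = \bbq{n\times m,d-1} \sim q^{(d-1)(m+n-d+1)}$ as $q\to+\infty$, by the estimate~\eqref{asball}. Part~\ref{thm:asyv1} of Theorem~\ref{thm:asyv} gives $|\mF_q|/\qbin{mn}{k}{q} \in O(|K_q|/q^{mn-k+1}) = O(q^{(d-1)(m+n-d+1)-(mn-k)-1})$, which is the first $O$-estimate; part~\ref{thm:asyv2} gives $\delta_q = |\mF'_q|/\qbin{mn}{k}{q} \in O(q^{mn-k+1}/|K_q|) = O(q^{-(d-1)(m+n-d+1)+(mn-k)+1})$, which is the second. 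The two limit cases then drop out immediately: if $(d-1)(m+n-d+1) \le mn-k$ the exponent $-(d-1)(m+n-d+1)+(mn-k)+1$ is $\ge 1$ and the exponent $(d-1)(m+n-d+1)-(mn-k)-1$ is $\le -1$, so $\delta_q \to 1$; if $(d-1)(m+n-d+1) \ge mn-k+2$ the first exponent is $\le -1$, so $1 - \delta_q \to 0$, i.e. $\delta_q \to 0$. The only genuinely delicate point, and the one I would be most careful about, is verifying that the $q$-binomial simplification producing the second explicit inequality is correct and that the direction of the inequality is preserved throughout (in particular that the quantity subtracted in the proof of Theorem~\ref{thm:asyv}, namely $(q^k-1)/(q^{mn}-1) - (q^{k-1}-1)/(q^{mn-1}-1)$, has the right sign); everything else is a direct citation of already-established results.
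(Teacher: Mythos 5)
Your proposal is correct and follows exactly the route the paper intends for this result: specialize Theorem~\ref{thm:boundsv} to $X=\F_q^{n\times m}$ with the cone $K$ equal to the ball of radius $d-1$ (so that $k$-dimensional codes of minimum distance at least $d$ are precisely the $k$-subspaces distinguishing $K$), divide the two bounds by $\qbin{mn}{k}{q}$ and simplify with $\qbin{mn-1}{k-1}{q}/\qbin{mn}{k}{q}=(q^k-1)/(q^{mn}-1)$, and then obtain the asymptotics by citing Theorem~\ref{thm:asyv} together with the estimate $\bbq{n\times m,d-1}\sim q^{(d-1)(m+n-d+1)}$ from~\eqref{asball}. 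The only blemish is the wording ``$1-\delta_q\to 0$, i.e.\ $\delta_q\to 0$'' in the second limit case --- what you actually use there is that the exponent $-(d-1)(m+n-d+1)+(mn-k)+1\le -1$ makes $\delta_q\in O(q^{-1})\to 0$ directly --- a verbal slip, not a gap.
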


\begin{remark}
Theorem~\ref{thm:mrdboundv} does not extend to the case 
$k= mn-(d-1)(m+n-d+1)+1$. Proposition~\ref{prop:onehalf} shows that, for this value of $k$,
$\limsup_{q \to + \infty}\, \delta_q(n \times m,k,d) \le 1/2$. On the other hand, in~\cite[Corollary VII.5]{antrobus2019maximal} the asymptotic density of $2 \times m$ MRD codes of dimension~$m$ was computed as $q \to +\infty$, proving that
$\lim_{q \to + \infty}\, \delta_q(2 \times m,m,2) >0$.
This shows that, in general, rank-metric codes of dimension
$k= mn-(d-1)(m+n-d+1)+1$ are neither sparse, nor dense, as
$q \to +\infty$.
\end{remark}

\begin{figure}[h]
\centering
\begin{tikzpicture}[scale=1]
\begin{axis}[legend style={at={(0.93,0.9)}, anchor = north east},
		legend cell align={left},
		width=13cm,height=8cm,
    xlabel={Values of $q$},
    xmin=0, xmax=30,
    ymin=0, ymax=0.15,
    xtick={2,5,10,15, 20, 25, 30},
    ytick={0,0.05,0.1,0.15},
    ymajorgrids=true,
    grid style=dashed,
     every axis plot/.append style={thick},  yticklabel style={/pgf/number format/fixed}
]
\addplot+[color=blueish,mark=square,mark size=1pt,smooth]
coordinates {
    (2,0.110917961672382372867113769688)
(3,0.129026524494521861972425400496)
(4,0.123287258101497510626354953227)
(5,0.113475125748967373475943829331)
(7,0.0949868052569529395383853392487)
(8,0.0873440279264669317463461195116)
(9,0.0807061788753077933728809956256)
(11,0.0698812019291066937646585898411)
(13,0.0615121742790498775027136481521)
(16,0.0520712797956251489065881816296)
(17,0.0495260205064709141556997934105)
(19,0.0451065014616512900696190237045)
(23,0.0382578452296056883148833987318)
(25,0.0355533609036389342071944996194)
(27,0.0332039884080708430941015137531)
(29,0.0311444801970344273912271817675)
(31,0.0293245576879204639258555479072)
(32,0.0284918023066196289518691971056)
(37,0.0249475447073089044372455903977)
(41,0.0226882772507479364107682924123)
(43,0.0217051592819086336624248906469)
(47,0.0199737584836556711599266246525)
(49,0.0192075229464873626528893035237)
(53,0.0178386473238730394651386686111)
(59,0.0161155104602309999278519303421)
(61,0.0156127347873415402698399622280)
(64,0.0149147166805079037212097668207)
(67,0.0142763970778614240416924910323)
(71,0.0135056544592087888585783556855)
(73,0.0131506515711527873168813146328)
(79,0.0121893801728231316726658302936)
(81,0.0118994275259788672820919615392)
(83,0.0116229423468179515418883143237)
(89,0.0108655237101049089579353202531)
(97,0.00999686513235324424957728962400)
(101,0.00961260250638997750741803913018)
(103,0.00943133680406044249694859798003)
(107,0.00908856334209349183634430937190)
(109,0.00892635071895234763994106395382)
(113,0.00861869357809763500949903277923)
(121,0.00806288603166586504264702085815)
(125,0.00781101965404132382861496542575)
(127,0.00769089537698515544714448781954)
(128,0.00763220792691554033669191973313)
(131,0.00746139830136190386563286096207)
(137,0.00714172911239226688464071078789)
(139,0.00704117296683542459782374398106)
(149,0.00657806816096686550409247882578)
(151,0.00649266141694111386929212875347)
(157,0.00624924728360864242949829551898)
(163,0.00602342304585894255913745122325)
(167,0.00588172641348946503733044674041)
(169,0.00581334874551831058609704188356)
(173,0.00568125407000670788951667708926)
(179,0.00549399577563675400632752450667)
(181,0.00543428946078262825381778947833)
(191,0.00515421894236855064075705618603)
(193,0.00510163341115962933996943628857)
(197,0.00499961672555037625316292077868)
(199,0.00495012312806265509380981986570)
(211,0.00467258477960074511303756582570)
(223,0.00442451380308435762359726528824)
(227,0.00434757479874080696387989169492)
(229,0.00431010002261011611583660070722)
(233,0.00423705564307970276059427408265)
(239,0.00413201587941784999638385362102)
(241,0.00409815041791529222693842932143)
(243,0.00406483552873366880280239726701)
(251,0.00393682163445370237918094434859)
(256,0.00386082825913724901573289941538)
(257,0.00384598027397562541926687598682)
(263,0.00375923646820818143786844437657)
(269,0.00367631910290115649165558878384)
(271,0.00364948686162202119395644909044)
(277,0.00357128976696743556256520274642)
(281,0.00352099376815540480851847172176)
(283,0.00349637328871753965130588794408)
(289,0.00342453522763948063362630354704)
(293,0.00337826098722291962116892967684)
(307,0.00322570432580123541761051935410)
(311,0.00318461512103641758171318045223)
(313,0.00316446056957754626939429644288)
(317,0.00312490719332944222052660959087)
(331,0.00299393040406706063198428011873)
(337,0.00294109915921963440034576443281)
(343,0.00289010006802226734087834404919)
(347,0.00285707201095403451879288930424)
(349,0.00284083945041729520997973940462)
(353,0.00280892145409655273037957569362)
(359,0.00276236693403414567793480870136)
(361,0.00274718979547871488581110087228)
(367,0.00270264277571557358052692362108)
(373,0.00265951737516182122744343134764)
(379,0.00261774661233626556838218347065)
(383,0.00259062082139140823349231837270)
(389,0.00255097004862185107221178771667)
(397,0.00249995261255776943698568362733)
(401,0.00247520153593680909486813842905)
(409,0.00242714111357667042412986700185)
(419,0.00236962791359947448411151575612)
(421,0.00235845080319087697749715152784)
(431,0.00230411039711934610219226372627)
(433,0.00229354142279672074603117077446)
(439,0.00226240835352808704779000585119)
(443,0.00224211831956443106821471694169)
(449,0.00221235657984337731470083114267)
(457,0.00217388192976223197756942007267)
(461,0.00215514210030269663377497934686)
(463,0.00214589282312738493486200045569)
(467,0.00212763041066129815142096060526)
(479,0.00207466176354803373078736465060)
(487,0.00204079059976922432028706484338)
(491,0.00202426639291772842642728637858)
(499,0.00199200795203174425523610219529)
    };
\addplot+[color=blush,mark=o,mark size=1pt,smooth]
coordinates {
(2,0.0571193301732408874395914763422)
(3,0.0895860668080219647948805558672)
(4,0.0953323140519103109067978972788)
(5,0.0928644679506458497208190524940)
(7,0.0825332080519224762001780169278)
(8,0.0772676826812215699646200554629)
(9,0.0723869150502000015002611845672)
(11,0.0639341870377944451166314404035)
(13,0.0570502788766965857820123586303)
(16,0.0489761737067536979834143816346)
(17,0.0467488888313594671315205165321)
(19,0.0428341487275661621532089716759)
(23,0.0366554274672771814090414899035)
(25,0.0341798315315725843576977232536)
(27,0.0320135785503349175054222753384)
(29,0.0301028601219269701599294228932)
(31,0.0284054735364540134015155071526)
(32,0.0276260300639554077271909231605)
(37,0.0242896639087979256612642234895)
(41,0.0221471600842966973876112251063)
(43,0.0212110947463994829435057385557)
(47,0.0195570960662517410121835988673)
(49,0.0188229110160594295305091030838)
(53,0.0175079618312857230253003128779)
(59,0.0158466946271321720476490688671)
(61,0.0153607198198427683670538729842)
(64,0.0146850964699387406765257697186)
(67,0.0140663137722623290884116389405)
(71,0.0133179673168924317204218105820)
(73,0.0129728426576969132485176064591)
(79,0.0120369414687896652093042086650)
(81,0.0117542477855727931225187600918)
(83,0.0114845151332527742814959471403)
(89,0.0107447509400876366837743762818)
(97,0.00989482443115178964938243290150)
(101,0.00951833412387864070722549767389)
(103,0.00934062571378453977239436410808)
(107,0.00900438832848733603786175341499)
(109,0.00884518195212466318505334676317)
(113,0.00854307354715539888126518258719)
(121,0.00799678347865295923659150999415)
(125,0.00774901571827163926457049956140)
(127,0.00763079925314918584165500070755)
(128,0.00757303285665540512906647932377)
(131,0.00740486282277226340072717838197)
(137,0.00708996929858142476477517398221)
(139,0.00699087117509322185418742690352)
(149,0.00653420852536289415620825340776)
(151,0.00644994099213207204916345887116)
(157,0.00620969033199550796054013889158)
(163,0.00598669075945925782309922944346)
(167,0.00584671241553694095017340103533)
(169,0.00577914902632435660059285258168)
(173,0.00564859994893854982406928490207)
(179,0.00546347075052124352862120542926)
(181,0.00540442802299524484566994402407)
(191,0.00512737186724300723448827673795)
(193,0.00507533423540796054396073447408)
(197,0.00497436420365012990461432225254)
(199,0.00492537065600248572754089434279)
(211,0.00465054281407299559786962545505)
(223,0.00440476032827728132488219245062)
(227,0.00432850538773015983095231019465)
(229,0.00429135939450560153429952353564)
(233,0.00421894757557383969635665470386)
(239,0.00411479824984061314497535165681)
(241,0.00408121504452950443849593966249)
(243,0.00404817552724323445200858311974)
(251,0.00392119858833418721793315781521)
(256,0.00384580489871618001467702886613)
(257,0.00383107270181684057406078826161)
(263,0.00374499632318067706688508636270)
(269,0.00366270254587417552838819338122)
(271,0.00363606908643407891705693584886)
(277,0.00355844290221348419630451496273)
(281,0.00350850750361569516091584092640)
(283,0.00348406165723647141730995713559)
(289,0.00341272606422253896241263729514)
(293,0.00336676990476471338219845525580)
(307,0.00321523092859652520600883716308)
(311,0.00317440770671297561135302157157)
(313,0.00315438236302388825387113785807)
(317,0.00311508014961566093914250423044)
(331,0.00298491229595637210606565352241)
(337,0.00293239745391471023108053473474)
(343,0.00288169840134155736164389624317)
(347,0.00284886183052824106028709882361)
(349,0.00283272256749298193980699784261)
(353,0.00280098645776986132655947722808)
(359,0.00275469351492573330280019352179)
(361,0.00273960069965446734995238193745)
(367,0.00269529847504279054829793750140)
(373,0.00265240621362056919568564131613)
(379,0.00261085766233979786269163119642)
(383,0.00258387427574385662008901298360)
(389,0.00254442897076417205179507638243)
(397,0.00249367120562373342865770498580)
(401,0.00246904420388188121295341131594)
(409,0.00242122115155158293504473558185)
(419,0.00236398584642327693960183809201)
(421,0.00235286196306242367705675564757)
(431,0.00229877672249791061924462292878)
(433,0.00228825668096905856566601944075)
(439,0.00225726643668265174571386384958)
(443,0.00223706842668063721761598018560)
(449,0.00220744015905242640195702630238)
(457,0.00216913539478285819194374622770)
(461,0.00215047722499608757989850736282)
(463,0.00214126798966828696681343184732)
(467,0.00212308412956419474127816821673)
(479,0.00207033949526118860216260319585)
(487,0.00203660859913653282856910165768)
(491,0.00202015197933892601511703602272)
(499,0.00198802388831898008657551306425)
};
\legend{\small{Theorem~\ref{thm:mrdboundv}}, \small{Theorem~\ref{thm:mrdboundcc}}}
\end{axis}
\end{tikzpicture}
\caption{\label{fig:comparison} Comparison of the upper bounds for $\delta_q(3\times 5, 5, 3)$ as $q \to +\infty$ in Theorem~\ref{thm:mrdboundcc} (red) and in Theorem~\ref{thm:mrdboundv} (blue).}
\end{figure}

\bigskip

\section{Asymptotic Density of MRD Codes for $m \to +\infty$} \label{sec:m}

In this section we study the asymptotic density of MRD codes as their number of columns, namely $m$, tends to infinity.
Although our approach is not powerful enough to compute the ``exact'' asymptotic density in this setting, as we will see
it improves on known results for several parameter sets.

\begin{notation}
In the sequel we fix a prime power $q$ and integers $n$, $d$ with $n \ge 2$ and $n \ge d \ge 1$. We omit ``$m \in \N, \; m \ge n$'' when writing $m \to +\infty$.
\end{notation}

As for Section~\ref{sec:rankmetric}, we start by surveying the previous literature. The analogue of Theorem~\ref{thm:ravby} 
for $m \to + \infty$
is the following.

\begin{theorem}[\text{see \cite[Corollary 6.4]{byrne2020partition}}] For all $d \ge 2$ we have
\begin{align*}
    \limsup_{m \to +\infty}\, \delta_q(n \times m, m(n-d+1), d) \le \frac{(q-1)(q-2)+1}{2(q-1)^2} \le \frac{1}{2}.
\end{align*}
\end{theorem}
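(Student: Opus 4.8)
The plan is to repeat, for $m \to +\infty$, the same combinatorial-geometry reduction that was used for $q \to +\infty$, namely to realize MRD codes of minimum distance $d$ in $\F_q^{n\times m}$ as the common complements of the family $\mA = \{\F_q^{n\times m}(U) \mid U \le \F_q^n, \dim(U) = d-1\}$ described in Remark~\ref{rem:expl}, and then to apply the upper bound of Theorem~\ref{thm:boundsv} (or Theorem~\ref{thm:boundscc}) with $K$ the ball of matrices of rank at most $d-1$. The difference is that now $q$ is held fixed and we let $m$ grow, so the asymptotic estimates~\eqref{eq:qbin} for the $q$-binomial coefficient no longer apply; instead we must estimate each ingredient as a function of $m$ for fixed $q$.

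First I would set $k = m(n-d+1)$, $N = mn$, and $K = $ the ball $\{M \in \F_q^{n\times m} : \rk(M) \le d-1\}$, whose size is $\bbq{n\times m, d-1}$; recall from~\eqref{asball} that this equals $\sum_{i=0}^{d-1} \qbin{n}{i}{q} \prod_{j=0}^{i-1}(q^m - q^j)$, which as a polynomial in $q^m$ has leading term $\qbin{n}{d-1}{q} q^{m(d-1)}$. The non-common-complements of $\mA$ are exactly the $k$-subspaces of $\F_q^{mn}$ meeting $K$ nontrivially, so $1 - \delta_q(n\times m, k, d)$ is bounded above by the quantity in the lower-bound part of Theorem~\ref{thm:boundsv}, i.e. by
\[
1 - \frac{\frac{|K|-1}{q-1}\,\qbin{mn-1}{k-1}{q}}{1+\left(\frac{|K|-1}{q-1}-1\right)\left(\frac{q^{k-1}-1}{q^{mn-1}-1}\right)}.
\]
Then I would take $\limsup_{m\to+\infty}$ of the complementary expression. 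The key limiting behaviors to extract are: $\frac{|K|-1}{q-1} \sim \frac{\qbin{n}{d-1}{q}}{q-1} q^{m(d-1)}$, the ratio $\qbin{mn-1}{k-1}{q} / \qbin{mn}{k}{q} = \frac{q^k - 1}{q^{mn}-1}$ exactly, and the ratio $\frac{q^{k-1}-1}{q^{mn-1}-1} \sim q^{k - mn} = q^{-m(d-1)}$. Since $|K|$ grows like $q^{m(d-1)}$ and the correction factor decays like $q^{-m(d-1)}$, their product tends to a finite nonzero constant determined by $\qbin{n}{d-1}{q}$; plugging in and simplifying should yield the stated bound $\frac{(q-1)(q-2)+1}{2(q-1)^2}$, with the final inequality $\le \tfrac12$ a one-line check since $(q-1)(q-2)+1 = q^2 - 3q + 3 \le q^2 - 2q + 1 = (q-1)^2$ for $q \ge 2$.

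The main obstacle I expect is the bookkeeping in the limit: one must be careful that $d$ is fixed while $m\to\infty$, so the dominant contribution to $\bbq{n\times m, d-1}$ comes from the $i = d-1$ term and equals $\qbin{n}{d-1}{q}\,q^{m(d-1)}(1 + o(1))$, and one must track how $\qbin{n}{d-1}{q}$ combines with the various factors of $q-1$ and $q^{k-1}$ to produce exactly $\frac{(q-1)(q-2)+1}{2(q-1)^2}$ rather than some other rational function of $q$. A cleaner route might be to instead invoke Theorem~\ref{thm:boundscc} directly with the intersection-structure data $\theta_q(n, d-1, i)$ from Lemma~\ref{lem:theta}, as in the proof of Theorem~\ref{thm:mrdboundcc}, and let $m \to +\infty$ there; this is presumably how the sharper $m$-asymptotic bounds later in the section are obtained, but for the coarse bound $\le \tfrac12$ stated here the Theorem~\ref{thm:boundsv} route suffices. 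I would also double-check the edge case $d = n$ (so $k = m$) separately, since then $n - d + 1 = 1$ and some of the exponents degenerate, though the same estimates go through.
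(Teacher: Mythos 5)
Two things to note, one about context and one about a concrete error in your plan. First, the paper itself gives no proof of this statement: it is quoted verbatim from \cite[Corollary 6.4]{byrne2020partition}, where it is obtained by the partition-balanced-families machinery. So your route through Theorem~\ref{thm:boundsv} is a genuinely different argument — and in fact it is exactly the computation the paper performs later, in~\eqref{eq:mrdboundvm}, when comparing its own bounds to the literature.

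The gap is in your expectation of what the limit produces. You write that plugging the ball $K=\{M:\rk(M)\le d-1\}$ into the lower bound of Theorem~\ref{thm:boundsv} (equivalently, the second inequality of Theorem~\ref{thm:mrdboundv}) and letting $m\to+\infty$ ``should yield the stated bound $\frac{(q-1)(q-2)+1}{2(q-1)^2}$''. It does not, and no amount of bookkeeping will make it: with $k=m(n-d+1)$ and $\bbq{n\times m,d-1}\sim\qbin{n}{d-1}{q}\,q^{m(d-1)}$, the correction term has numerator tending to $\qbin{n}{d-1}{q}$ and denominator tending to $(q-1)+\qbin{n}{d-1}{q}$, so what you get is
\[
\limsup_{m\to+\infty}\,\delta_q\bigl(n\times m,\,m(n-d+1),\,d\bigr)\;\le\;\frac{q-1}{\qbin{n}{d-1}{q}+q-1},
\]
i.e.\ precisely~\eqref{eq:mrdboundvm}; this depends on $n$ and $d$, not on $q$ alone, and is not equal to the stated rational function. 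Your proof is salvageable, but you must add the missing comparison step rather than hope the expressions coincide: since $d\ge 2$ and $d\le n$ give $1\le d-1\le n-1$, we have $\qbin{n}{d-1}{q}\ge \qbin{2}{1}{q}=q+1$, hence
\[
\frac{q-1}{\qbin{n}{d-1}{q}+q-1}\;\le\;\frac{q-1}{2q}\;\le\;\frac{(q-1)(q-2)+1}{2(q-1)^2},
\]
where the last inequality is equivalent to $(q-1)^3\le q(q^2-3q+3)$, i.e.\ to $-1\le 0$. With this step your argument proves a strictly stronger bound than the cited one (consistent with the paper's claim that its methods improve on \cite{byrne2020partition}). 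Two minor points you handled correctly: you need the non-asymptotic inequality of Theorem~\ref{thm:boundsv} at fixed $q$ (as packaged in Theorem~\ref{thm:mrdboundv}), not the $q\to+\infty$ statements of Theorem~\ref{thm:asyv}; and the hypothesis $|K|\ge q$ holds since the ball of radius $d-1\ge 1$ contains all matrices of rank at most one. The case $d=n$ needs no separate treatment.
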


The analogue of Theorem~\ref{thm:heide} for $m \to +\infty$ is~\cite[Theorem VII.6]{antrobus2019maximal}, which we directly state in the
language of this paper for convenience.
The equivalence with~\cite[Theorem VII.6]{antrobus2019maximal} easily
follows from the estimate in~\eqref{eq:pias} below and~\cite[Theorem VII.1]{antrobus2019maximal}.

\begin{theorem}[\text{see \cite[Theorem VII.6]{antrobus2019maximal}}]
\label{jhm}
For all $d \ge 2$ we have
\begin{align*}
    \limsup_{m \to +\infty} \, \delta_{q}(n \times m, m(n-d+1), d) \le \prod_{i=1}^{\infty} \left(1-\frac{1}{q^i}\right)^{q(d-1)(n-d+1)+1}.
\end{align*}
\end{theorem}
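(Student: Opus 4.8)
The statement to be proved, Theorem~\ref{jhm}, is attributed to~\cite[Theorem VII.6]{antrobus2019maximal} but restated in the $q$-series language of this paper. So the plan is not to reprove the density bound from scratch, but rather to establish the claimed \emph{equivalence} between the two formulations. Concretely, I would start from~\cite[Theorem VII.6]{antrobus2019maximal}, whose conclusion (in its original form) bounds $\limsup_{m \to +\infty} \delta_q(n\times m, m(n-d+1), d)$ by a power of some quantity $P_\infty(q)$ arising from the theory of spectrum-free matrices; the exponent there is $(d-1)(n-d+1)$ (possibly with an additive correction), matching the exponent $q(d-1)(n-d+1)+1$ up to the identification of the base. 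The crux is therefore to recognize that the relevant limiting probability, when re-expressed, is exactly the Euler product $\prod_{i=1}^\infty (1 - q^{-i})$ raised to the appropriate power.

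\textbf{Key steps, in order.} First, I would recall the estimate labelled~\eqref{eq:pias} in the excerpt (which appears slightly later in the paper) together with~\cite[Theorem VII.1]{antrobus2019maximal}: the former presumably gives the asymptotics, as $m \to +\infty$, of the relevant counting quantity $\bbq{n\times m, r}$ or of a ratio of such balls, in terms of $\prod_{i}(1-q^{-i})$; the latter identifies the limiting density of spectrum-free (or MRD) configurations as a product over independent ``coordinate slots.'' Second, I would substitute these two inputs into the bound of~\cite[Theorem VII.6]{antrobus2019maximal}, carefully tracking the exponent: each of the $(d-1)(n-d+1)$ ``critical positions'' contributes a factor of $\prod_{i=1}^\infty (1-q^{-i})^q$ (the power $q$ coming from the $q$ choices within a projective line / the $q-1$ nonzero scalars plus one, as in the structure of $\F_q$), and there is one extra global factor of $\prod_{i=1}^\infty(1-q^{-i})$, accounting for the ``$+1$'' in the exponent $q(d-1)(n-d+1)+1$. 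Third, I would verify that taking $\limsup_{m\to+\infty}$ commutes with these substitutions — i.e., that the error terms in~\eqref{eq:pias} are $o(1)$ as $m \to +\infty$ uniformly in the parameters that matter — so that the limit superior of the density is genuinely bounded by the closed Euler-product expression and not merely asymptotically comparable to it.

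\textbf{Main obstacle.} The delicate point is the bookkeeping of the exponent $q(d-1)(n-d+1)+1$: one must show that the base of the power is $\prod_{i=1}^\infty(1-q^{-i})$ (a single Euler factor per ``slot'') and that the total number of slots is $q(d-1)(n-d+1)+1$ rather than, say, $(q-1)(d-1)(n-d+1)$ or $(d-1)(n-d+1)$. This requires unwinding precisely how~\cite{antrobus2019maximal} decomposes the event ``$\mC$ is MRD'' into conditionally (near-)independent events indexed by matrices or vectors, and counting those events. A secondary technical nuisance is justifying the interchange of $\limsup$ with the infinite product: since $\prod_{i=1}^\infty(1-q^{-i})$ converges (absolutely, as $q \ge 2$), one only needs that for each fixed truncation the finite-$m$ quantity converges, plus a tail estimate uniform in $m$; this is routine but must be stated. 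I would expect the bulk of the writing to be the exponent identification, with the analytic convergence being a short remark citing standard facts about the Euler function $\phi$ (equivalently, the $q$-Pochhammer symbol $(q^{-1};q^{-1})_\infty$).
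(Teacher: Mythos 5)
Your plan matches what the paper actually does: Theorem~\ref{jhm} is not reproved but simply restated from \cite[Theorem VII.6]{antrobus2019maximal} in the language of the Euler product, with the paper noting only that the equivalence follows from the estimate in~\eqref{eq:pias} together with \cite[Theorem VII.1]{antrobus2019maximal} — exactly the translation you outline. Your finer speculation about where the exponent $q(d-1)(n-d+1)+1$ originates inside the cited proof is not something the paper verifies either, so it does not affect the correctness of your approach.
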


In order to derive the asymptotic version of Theorem~\ref{thm:mrdboundcc} for $m \to +\infty$, we will first compute the asymptotics of the quantities 
it involves (Lemma~\ref{lem:num} below).
 For ease of notation, let
 \begin{equation} \label{not:gamma}
\pi(q) := \prod_{i=1}^{\infty}\displaystyle\left(\frac{q^i}{q^i-1}\right).
\end{equation}
The quantity $\pi(q)$ arises in the asymptotic estimate of the $q$-binomial coefficient $\qbin{ma}{mb}{q}$ as $m$ tends to infinity. More precisely, for all integers $a>b>0$ we have
\begin{align} \label{eq:pias}
    \qbin{ma}{mb}{q} = \prod_{i=0}^{mb-1}\frac{\left(q^{ma}-q^i\right)}{\left(q^{mb}-q^i\right)}= q^{mb(ma-mb)} \prod_{i=1}^{mb}\frac{1-q^{i-ma-1}}{1-{q^{-i}}} \sim q^{m^2b(a-b)}\,\pi(q)
\end{align}
as $m\to +\infty$.
We will need this estimate later.

 \begin{remark} \label{rem:euler}
 The infinite product $\pi(q)$ in~\eqref{not:gamma} is closely related to the 
  Euler function $\phi$; see~\cite[Section 14]{apostol2013introduction} for a standard reference. The latter is the function $\phi:(-1,1) \to \R$ defined by
\begin{equation} \label{euler}
\phi(x) := \prod_{i=1}^{\infty}(1-x^i)
 \end{equation}
 for all $x \in (-1,1)$.
 We then have $\pi(q)=1/\phi(1/q)$ for all $q \in Q$. In particular,
 $\pi(q)>1$ for all $q \in Q$.
 A classical result in number theory, due to Euler himself, expresses the infinite product in~\eqref{euler} as the infinite sum
 $$\phi(x) = 1+\sum _{k=1}^{\infty }(-1)^{k}\left(x^{k(3k+1)/2}+x^{k(3k-1)/2}\right) = 1-x-x^{2}+x^{5}+x^{7}-x^{{12}}-x^{{15}}+\ldots.$$
 This is the famous Pentagonal Number Theorem~\cite[Theorem 14.3]{apostol2013introduction}. From the above expression for $\phi$ as a power series, we deduce the following asymptotic estimates:
 \begin{equation} \label{eq:euler}
     \phi(x) \sim 1 \mbox{ as $x \to 0$}, \qquad \quad \phi(x)-1 \sim -x \mbox{ as $x \to 0$}.
 \end{equation}
 \end{remark}

The next result gives an asymptotic estimate for $\nu_q(mn,m(n-d+1),mi)$ for $1 \le i \le d-1$ as $m \to +\infty$. We will need it to establish Theorem~\ref{thm:mrdboundccm}.
Note that we only examine dimensions that are a multiple of $m$, as for other dimensions MRD codes do not exist. Furthermore, we will only need multiples of $m$ as intersection dimensions. The proof of the next lemma can be found in the Appendix.

\begin{lemma} \label{lem:num}
Let $d \ge 2$ and let $0 \le i \le d-1$ be an integer. The following  asymptotic estimates hold as $m \to + \infty$: 
\begin{align*}
    \nu_q(mn,m(n-d+1),mi) \sim \begin{cases}
    q^{m^2(n-d+1)(d-1)}\,\left(\pi(q)-1\right)^2/\pi(q) &\textnormal{ if } 0 \le i \le d-2, \\
    q^{m^2(n-d+1)(d-1)}\,\left(\pi(q)-1\right) &\textnormal{ if } i=d-1.
    \end{cases}
\end{align*}
\end{lemma}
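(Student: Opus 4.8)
The plan is to start from the explicit formula for $\nu_q(N,k,\ell)$ in Notation~\ref{notat:nu}, specialize it to $N=mn$, $k=m(n-d+1)$, $\ell=mi$, and track the dominant terms as $m\to+\infty$. Writing $k(N-k)=m^2(n-d+1)(d-1)$, the leading binomial term contributes $\qbin{mn}{m(n-d+1)}{q}\sim q^{m^2(n-d+1)(d-1)}\,\pi(q)$ by the estimate~\eqref{eq:pias} (with $a=n$, $b=n-d+1$, so $b(a-b)=(n-d+1)(d-1)$). The remaining two terms of $\nu_q$ are $-2q^{k(N-k)}=-2q^{m^2(n-d+1)(d-1)}$ and the product term $q^{(2k-N+\ell)(N-k)}\prod_{j=\ell}^{N-k-1}(q^{N-k}-q^j)$, which I would rewrite in the form $q^{k(N-k)}\cdot q^{(\ell-(N-k))(N-k)}\prod_{j=\ell}^{N-k-1}(q^{N-k}-q^j) = q^{k(N-k)}\prod_{j=\ell}^{N-k-1}(1-q^{j-(N-k)})$. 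Here $N-k=m(d-1)$ and $\ell=mi$, so this becomes $q^{k(N-k)}\prod_{t=1}^{m(d-1)-mi}(1-q^{-t})$.

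Next I would pull out the common factor $q^{m^2(n-d+1)(d-1)}$ from all three terms, reducing the problem to the asymptotics of
\[
\prod_{i'=1}^{mb}\frac{1-q^{i'-ma-1}}{1-q^{-i'}} \;-\; 2 \;+\; \prod_{t=1}^{m(d-1-i)}(1-q^{-t})
\]
as $m\to+\infty$, where the first factor tends to $\pi(q)$ and, in the case $0\le i\le d-2$, the third factor tends to $\prod_{t=1}^\infty(1-q^{-t})=1/\pi(q)$. So the bracket tends to $\pi(q)-2+1/\pi(q)=(\pi(q)-1)^2/\pi(q)$, giving the first case. For $i=d-1$ the upper limit $m(d-1-i)=0$, so the third product is empty and equals $1$ by convention, and the bracket tends to $\pi(q)-2+1=\pi(q)-1$, giving the second case. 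Throughout, I would invoke~\eqref{eq:pias} and the standard fact that $\prod_{t=1}^{m(d-1-i)}(1-q^{-t})\to 1/\phi(1/q)=1/\pi(q)$ (a convergent infinite product, since $d-1-i\ge 1$ forces the number of factors to grow).

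The one point requiring a little care—and the main (mild) obstacle—is checking that the convergence of the first factor to $\pi(q)$ is genuine and that the error terms are $o(1)$ rather than merely bounded, so that after subtracting the constant $2$ and adding a quantity converging to $1/\pi(q)$ one may conclude the bracket converges to $(\pi(q)-1)^2/\pi(q)$ (which is nonzero, since $\pi(q)>1$); this legitimizes writing ``$\sim$'' for the product of $q^{m^2(n-d+1)(d-1)}$ with the bracket. For the first factor this is exactly the manipulation carried out in~\eqref{eq:pias}: $\prod_{i'=1}^{mb}(1-q^{i'-ma-1})$ differs from $1$ by $O(q^{-m})$ terms and $\prod_{i'=1}^{mb}(1-q^{-i'})^{-1}$ differs from $\pi(q)$ by a tail $\prod_{i'>mb}(1-q^{-i'})^{-1}=1+O(q^{-mb})$; similarly the tail of the third product is $1+O(q^{-m(d-1-i)})$. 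Collecting these, the bracket equals $(\pi(q)-1)^2/\pi(q)+O(q^{-m})$ when $i\le d-2$ and $\pi(q)-1+O(q^{-m})$ when $i=d-1$, and multiplying by $q^{m^2(n-d+1)(d-1)}$ yields the claimed estimates. Since the paper says the proof is deferred to the Appendix, I would present exactly this computation there.
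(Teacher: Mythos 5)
Your proposal is correct and follows essentially the same route as the paper's proof in the Appendix: you factor out $q^{k(N-k)}=q^{m^2(n-d+1)(d-1)}$ and show the remaining bracket converges to $\pi(q)-2+1/\pi(q)=(\pi(q)-1)^2/\pi(q)$ for $i\le d-2$ and to $\pi(q)-1$ for $i=d-1$, using~\eqref{eq:pias} and the convergence of $\prod_{t=1}^{m(d-1-i)}(1-q^{-t})$ to $1/\pi(q)$ (or the empty-product convention). The only cosmetic difference is that you re-derive the normalized expression directly from Notation~\ref{notat:nu}, whereas the paper invokes the already-established identity~\eqref{pf:nuq1}; the computation is the same.
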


We are now ready to derive the asymptotic version of Theorem~\ref{thm:mrdboundcc} as $m \to +\infty$.

\begin{theorem} \label{thm:mrdboundccm}
For all $d \ge 2$ we have 
\begin{equation*}
\limsup_{m \to +\infty} \, \delta_q(n \times m, m(n-d+1),d) \le  \frac{1}{\displaystyle\qbin{n}{d-1}{q}\left(\pi(q)-1\right)+1}.
\end{equation*}
\end{theorem}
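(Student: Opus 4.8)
The plan is to take the limit superior as $m \to +\infty$ directly in the upper bound provided by Theorem~\ref{thm:mrdboundcc}. Fix $d \ge 2$ and set $k = m(n-d+1)$. By Theorem~\ref{thm:mrdboundcc},
\begin{equation*}
\delta_q(n \times m, k, d) \le 1 - \frac{\qbin{n}{d-1}{q}^2 \, \nu_q(mn,k,m(d-1))^2}{\qbin{mn}{k}{q} \, \sum_{i=0}^{d-1} \nu_q(mn,k,mi)\, \theta_q(n,d-1,i)},
\end{equation*}
so it suffices to compute the asymptotics as $m \to +\infty$ of the fraction being subtracted. Note that $\qbin{n}{d-1}{q}$ and $\theta_q(n,d-1,i)$ do not depend on $m$; only $\nu_q(mn,k,mi)$ and $\qbin{mn}{k}{q}$ do. Hence the task reduces to plugging in the asymptotic estimates already available: Lemma~\ref{lem:num} for $\nu_q(mn,m(n-d+1),mi)$, and the estimate~\eqref{eq:pias} for $\qbin{mn}{m(n-d+1)}{q}$, which with $a = n$, $b = n-d+1$ gives $\qbin{mn}{k}{q} \sim q^{m^2(n-d+1)(d-1)}\,\pi(q)$.

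First I would compute the denominator $\sum_{i=0}^{d-1}\nu_q(mn,k,mi)\,\theta_q(n,d-1,i)$. By Lemma~\ref{lem:num}, every term with $0 \le i \le d-2$ is asymptotic to $q^{m^2(n-d+1)(d-1)}(\pi(q)-1)^2/\pi(q)$ times the constant $\theta_q(n,d-1,i)$, while the $i=d-1$ term is asymptotic to $q^{m^2(n-d+1)(d-1)}(\pi(q)-1)$ times $\theta_q(n,d-1,d-1) = \qbin{n}{d-1}{q}$ (since there is exactly one way for two $(d-1)$-spaces to intersect in dimension $d-1$, namely to coincide). Since all $d$ terms share the common factor $q^{m^2(n-d+1)(d-1)}$, the sum is asymptotic to
\begin{equation*}
q^{m^2(n-d+1)(d-1)}\left( \frac{(\pi(q)-1)^2}{\pi(q)}\sum_{i=0}^{d-2}\theta_q(n,d-1,i) \;+\; (\pi(q)-1)\,\qbin{n}{d-1}{q}\right).
\end{equation*}
Here I would use the identity $\sum_{i=0}^{d-1}\theta_q(n,d-1,i) = \qbin{n}{d-1}{q}^2$ (the total number of ordered pairs of $(d-1)$-subspaces of $\F_q^n$), so that $\sum_{i=0}^{d-2}\theta_q(n,d-1,i) = \qbin{n}{d-1}{q}^2 - \qbin{n}{d-1}{q} = \qbin{n}{d-1}{q}(\qbin{n}{d-1}{q}-1)$.

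Next I would assemble the fraction. The numerator $\qbin{n}{d-1}{q}^2\,\nu_q(mn,k,m(d-1))^2$ is asymptotic to $\qbin{n}{d-1}{q}^2 \cdot q^{2m^2(n-d+1)(d-1)}(\pi(q)-1)^2$. Dividing by $\qbin{mn}{k}{q} \sim q^{m^2(n-d+1)(d-1)}\pi(q)$ and by the denominator sum computed above, the factors $q^{2m^2(n-d+1)(d-1)}$ cancel exactly, leaving a ratio of $q$-dependent (but $m$-independent) constants; writing $B := \qbin{n}{d-1}{q}$ for brevity, this limit is
\begin{equation*}
\frac{B^2(\pi(q)-1)^2}{\pi(q)\left( \frac{(\pi(q)-1)^2}{\pi(q)}B(B-1) + (\pi(q)-1)B\right)} = \frac{B(\pi(q)-1)}{(\pi(q)-1)(B-1) + \pi(q)} = \frac{B(\pi(q)-1)}{B(\pi(q)-1)+1},
\end{equation*}
after cancelling $B(\pi(q)-1)$ from numerator and denominator and simplifying $(\pi(q)-1)(B-1)+\pi(q) = B(\pi(q)-1) + 1$. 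Therefore $\limsup_{m\to+\infty}\delta_q(n\times m,k,d) \le 1 - B(\pi(q)-1)/(B(\pi(q)-1)+1) = 1/(B(\pi(q)-1)+1)$, which is the claimed bound. The main obstacle is purely bookkeeping: making sure the $q^{m^2(n-d+1)(d-1)}$ powers cancel cleanly across numerator, the $q$-binomial, and the sum (so that the limit superior of the subtracted fraction is a genuine positive constant rather than $0$ or $\infty$), and correctly invoking the combinatorial identity $\sum_i \theta_q(n,d-1,i) = \qbin{n}{d-1}{q}^2$ to collapse the sum; since $\pi(q) > 1$ by Remark~\ref{rem:euler}, all quantities involved are strictly positive, so no degeneracy arises.
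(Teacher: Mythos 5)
Your proposal is correct and follows essentially the same route as the paper's proof: both take the limit superior in the bound of Theorem~\ref{thm:mrdboundcc}, estimate $\nu_q(mn,m(n-d+1),mi)$ via Lemma~\ref{lem:num} and $\qbin{mn}{m(n-d+1)}{q}$ via~\eqref{eq:pias}, collapse the $\theta_q$-sum using $\sum_{i=0}^{d-2}\theta_q(n,d-1,i)=\qbin{n}{d-1}{q}\bigl(\qbin{n}{d-1}{q}-1\bigr)$ and $\theta_q(n,d-1,d-1)=\qbin{n}{d-1}{q}$, and cancel the common factor $q^{m^2(n-d+1)(d-1)}$ to land on the constant $1/\bigl(\qbin{n}{d-1}{q}(\pi(q)-1)+1\bigr)$. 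The algebraic simplification you carry out matches the paper's computation, so no gap remains.
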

\begin{proof}
To simplify the notation throughout the proof, let $k_m=m(n-d+1)$. All estimates in the sequel are for $m \to +\infty$.
The upper bound on $\delta_q(n \times m, k_m,d)$ given in Theorem~\ref{thm:mrdboundcc} reads
\begin{equation}\label{initial}
    \delta_q(n \times m, k_m, d) \le \frac{a_m+b_m-c_m}{a_m+b_m},
\end{equation}
where:
\begin{align*}
    a_m &= \textstyle\sum_{i=0}^{d-2} \nu_q(mn,k_m,mi) \,  \theta_q(n,d-1,i), \\
    b_m &= \theta_q(n,d-1,d-1) \, \nu_q(mn,k_m,m(d-1)), \\
    c_m &= \frac{\qbin{n}{d-1}{q}^2 \, \nu_q(mn,k_m,m(d-1))^2}{\qbin{mn}{k_m}{q}}.
\end{align*}
The three quantities above can be conveniently estimated individually with the aid of 
Lemma~\ref{lem:num}.
All the computations are tedious but straightforward, so we only include the final results:
\begin{equation}\label{eq:aprojspb} 
\left\{ \; \begin{split}
    	a_m &\sim  \left(\pi(q)-1\right)^2/\pi(q)  \, \textstyle\sum_{i=0}^{d-2} \theta_q(n,d-1,i) \, q^{k_m(mn-k_m)},\\
	 b_m &\sim \theta_q(n,d-1,d-1) \, (\pi(q)-1) \, q^{k_m(mn-k_m)}, \\
	 c_m &\sim \qbin{n}{d-1}{q}^2 \left(\pi(q)-1\right)^2 / \pi(q) \, q^{k_m(mn-k_m)}.
\end{split}\right.
\end{equation}	
Using   Notation~\ref{notat:theta} directly we find
\begin{align*}
    \textstyle\sum_{i=0}^{d-2}  \theta_q(n,d-1,i) &= \qbin{n}{d-1}{q}\,(\qbin{n}{d-1}{q}-1), \\
    \theta_q(n,d-1,d-1) &= \qbin{n}{d-1}{q}.
\end{align*}
This allows us to rewrite~\eqref{eq:aprojspb} as
\begin{equation}\label{eq:aprojspb2} 
\left\{ \; \begin{split}
    	a_m &\sim  \left(\pi(q)-1\right)^2/\pi(q) \,  \qbin{n}{d-1}{q}\,(\qbin{n}{d-1}{q}-1) \, q^{k_m(mn-k_m)},\\
	 b_m &\sim  (\pi(q)-1) \, \qbin{n}{d-1}{q} \, q^{k_m(mn-k_m)}, \\
	 c_m &\sim \qbin{n}{d-1}{q}^2  \left(\pi(q)-1\right)^2 / \pi(q) \, q^{k_m(mn-k_m)}.
\end{split}\right.
\end{equation}	
Now observe that the three estimates in~\eqref{eq:aprojspb2} are of the form 
$a_m \sim a f_m$, 
$b_m \sim b f_m$ and
$c_m \sim c f_m$,
where $f_m=q^{k_m(mn-k_m)}$
and $a,b,c \in \R$ are positive constants in $m$. Moreover, one can check that
$$\frac{a+b-c}{a+b} = \frac{1}{\displaystyle\qbin{n}{d-1}{q}\left(\pi(q)-1\right)+1} \neq 0.$$
Therefore the desired theorem  follows by taking the limit superior as $m \to +\infty$ in~\eqref{initial}. 
\end{proof}

As for the case where
$q \to +\infty$, the bound on the density of MRD codes that one obtains from Theorem~\ref{thm:mrdboundcc} is better than the one from Theorem~\ref{thm:boundsv}. We elaborate on this in the following remark.

\begin{remark} For $m$ sufficiently large, the bound on $\delta_q(n \times m, m(n-d+1),d)$ obtained from Theorem~\ref{thm:boundsv} is worse than the one of Theorem~\ref{thm:mrdboundcc} (which we in turn derived from Theorem~\ref{thm:boundscc}). This is true even in the asymptotics.
More precisely,
recall the following estimate for the size of the ball:
\begin{equation} \label{asballm}
    \bbq{n \times m, d-1} \sim \qbin{n}{d-1}{q} q^{m(d-1)} \quad \mbox{as $m \to +\infty$.}
\end{equation}
Using this estimate, one can obtain the following asymptotic version of the bound of Theorem~\ref{thm:mrdboundv}:
\begin{align} \label{eq:mrdboundvm}
\limsup_{m \to +\infty}\,\delta_q(n \times m, m(n-d+1), d) &\le \frac{q-1}{\qbin{n}{d-1}{q}+q-1} \mbox{ whenever $d \ge 2$}.
\end{align}
An easy computation shows that the upper bound in Theorem~\ref{thm:mrdboundccm} is always sharper than the one in~\eqref{eq:mrdboundvm}.
\end{remark}

\begin{remark}
The asymptotic upper bound of Theorem~\ref{thm:mrdboundccm} is sharper than the bound of~\cite[Theorem VII.6]{antrobus2019maximal} for $q$ sufficiently large, $n \ge d \ge 2$ and $n >2$, while it is coarser for small values of $q$. To show this, we first rewrite~\cite[Theorem VII.6]{antrobus2019maximal}
(in the form stated in Theorem~\ref{jhm}) as \begin{align*}
    \limsup_{m \to +\infty} \, \delta_{q}(n \times m, m(n-d+1), d) \le \frac{1}{\pi(q)^{q(d-1)(n-d+1)+1}}.
\end{align*}
We now prove that
\begin{equation} \label{heideandus}
\lim_{q \to +\infty} \frac{\pi(q)^{q(d-1)(n-d+1)+1}}{\qbin{n}{d-1}{q}(\pi(q)-1)+1} = 0 \quad \mbox{for $n \ge d \ge 2$ and $n >2$},
\end{equation}
from which it immediately follows that
the bound of  Theorem~\ref{thm:mrdboundccm} is sharper than the one of
~\cite[Theorem VII.6]{antrobus2019maximal} for $q$ sufficiently large.
To see why~\eqref{heideandus} holds, we note first that
\begin{align*}
    \lim_{q \to +\infty} \left(1-\frac{1}{q^i}\right)^q = \begin{cases}
    1/e \quad &\textnormal{ if } i=1, \\
    1 \quad &\textnormal{ if } i \ge 2.
    \end{cases}
\end{align*}
Therefore $\lim_{q \to +\infty}\phi(1/q)^q=\prod_{i=1}^{\infty} \left(1-1/q^i\right)^q = 1/e$. Since $\pi(q)=1/\phi(1/q)$, as already observed in Remark~\ref{rem:euler}, we have $\lim_{q \to +\infty}\pi(q)^q = e$. Furthermore, using the asymptotic estimates in~\eqref{eq:euler} we find that
\begin{equation*}
\pi(q)-1 = \frac{1-\phi(1/q)}{\phi(1/q)} \sim \frac{1}{q} \quad \text{ as } q \to +\infty.
\end{equation*}
Combining this with the estimate for $\pi(q)^q$ given above we obtain, for $n \ge d \ge 2$, 
$$ \frac{\pi(q)^{q(d-1)(n-d+1)+1}}{\qbin{n}{d-1}{q}(\pi(q)-1)+1} \sim \frac{e^{(d-1)(n-d+1)+1}}{q^{(d-1)(n-d+1)-1}} \quad \textnormal{ as } q \to +\infty.$$ The fraction on the RHS of the previous estimate approaches $0$ as $q$ approaches~$+\infty$, thereby establishing the desired limit in~\eqref{heideandus}.

A comparison of the two bounds we just discussed can be seen in Figure~\ref{fig:comparison2}. The plot 
shows that for $(n,d)=(3,3)$ the bound of Theorem~\ref{thm:mrdboundccm} is sharper than~\cite[Theorem~VII.6]{antrobus2019maximal} for all prime powers $q\ge 9$, and coarser for $2 \le q \le 8$. The two bounds are decreasing and increasing in~$q$, respectively.
\end{remark}

\begin{figure}[h]
\centering
\begin{tikzpicture}[scale=1]
\begin{axis}[legend style={at={(0.04,0.93)}, anchor = north west},
		legend cell align={left},
		width=13cm,height=8cm,
    xlabel={Values of $q$},
    xmin=0, xmax=30,
    ymin=0, ymax=0.15,
    xtick={2,5,10,15, 20, 25, 30},
    ytick={0,0.05,0.1,0.15},
    ymajorgrids=true,
    grid style=dashed,
     every axis plot/.append style={thick},  yticklabel style={/pgf/number format/fixed}
]
\addplot+[color=blueish,mark=square,mark size=1pt,smooth]
coordinates {
(2,0.00200861374478225826545433309169)
(3,0.0172981854536559066298775195858)
(4,0.0347815230728943907748000355616)
(5,0.0490954176866865783402860880426)
(7,0.0690699779444482735868732671831)
(8,0.0760978586138095612708591476479)
(9,0.0818190922629343056556067432747)
(11,0.0905347600108603123388378520459)
(13,0.0968388143804218181044443831483)
(16,0.103568234534981703177034136934)
(17,0.105320312782207408425836493253)
(19,0.108304666163287583548901598085)
(23,0.112792835431191270163985488899)
(25,0.114522132827022491158795225920)
(27,0.116005522417876548095747503820)
(29,0.117291878917620425909980354413)
(31,0.118417952624045207034154186244)
    };
\addplot+[color=blush,mark=o,mark size=1pt,smooth]
coordinates {
(2,0.0548268869286448122598706315360)
(3,0.0892135433490818745938532625822)
(4,0.0952432974139241038778555389371)
(5,0.0928365444429189115539652303954)
(7,0.0825286196622346390460484836825)
(8,0.0772654759364910868946547174941)
(9,0.0723857650525810489961379827504)
(11,0.0639338125913387247111134533627)
(13,0.0570501331875791046278912090352)
(16,0.0489761291228975518106363106458)
(17,0.0467488573425071574213321250036)
(19,0.0428341321257948306765597390756)
(23,0.0366554219710014133047488284911)
(25,0.0341798281459777455411237693009)
(27,0.0320135763878174024613938609447)
(29,0.0301028586968346119618505603765)
(31,0.0284054725714775897626415139338)
};
\legend{\small{Theorem VII.6 in \cite{antrobus2019maximal}}, \small{Theorem~\ref{thm:mrdboundccm}}}
\end{axis}
\end{tikzpicture}
\caption{\label{fig:comparison2} The upper bounds for $\limsup_{m \to +\infty}\delta_q(3\times m, m, 3)$ from Theorem~\ref{thm:mrdboundccm} (red) and~\cite[Theorem VII.6]{antrobus2019maximal} (blue).}
\end{figure}

\bigskip

\section{Further Properties of Density Functions} \label{sec:prop}
We devote the last section of the paper to general properties of the density function of rank-metric codes. More precisely, we initiate the study of 
how 
density functions relate to each other as the  parameters $(n,m,d,k)$ change.
As an application of our results, we reinterpret 
the bounds of \cite{antrobus2019maximal}
via shortening and duality considerations.

\begin{notation}
In the sequel, $q$ is a prime power and $n$, $m$, $d$  denote positive integers with $m \ge n \ge 2$ and $1 \le d \le n$. When writing ``$q \to +\infty$'' or ``$m \to +\infty$'', the other parameters are treated as constants. As in Section~\ref{sec:asy}, the limit for $q \to +\infty$ is taken over the set of all prime powers, denoted by $Q$. 
\end{notation}

\begin{lemma} \label{lem:basis}
Every MRD code $\mC \le \mat$ with minimum distance $d \ge 2$ admits a unique basis of the form
\begin{align}\label{specialbasis}
    \left\{ \left( \begin{array}{c}
    E_{ij} \\
    \hline
    A_{ij}
    \end{array} \right) 
    \mid 1 \le i \le n-d+1, \; 1 \le j \le m \right\},
\end{align}
where $\smash{A_{ij} \in \mathbb{F}_q^{(d-1)\times m}}$ is a suitable matrix and $\smash{E_{ij} \in \mathbb{F}_q^{(n-d+1) \times m}}$ denotes the matrix having a~1 in position $(i,j)$ and 0 elsewhere.
Moreover, all the matrices in~\eqref{specialbasis} have rank exactly $d$.
\end{lemma}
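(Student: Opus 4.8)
The plan is to realize $\mC$ as a complement of a convenient coordinate subspace and then read the claimed basis off the induced projection isomorphism.

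First I would set $U := \langle e_{n-d+2}, \dots, e_n\rangle \le \F_q^n$ and $V := \mat(U)$, which in concrete terms is the subspace of matrices whose first $n-d+1$ rows are zero; thus $\dim V = m(d-1)$ and every nonzero element of $V$ has rank at most $d-1$. Since $\mC$ is MRD of minimum distance $d$ we have $\dim \mC = m(n-d+1)$ and $\rk(M) \ge d$ for every nonzero $M \in \mC$, so $\mC \cap V = \{0\}$; comparing dimensions yields $\mC \oplus V = \mat$ (this is the instance of Remark~\ref{rem:expl} in which $U$ is spanned by standard basis vectors). Let $p \colon \mat \to \F_q^{(n-d+1)\times m}$ be the projection onto the first $n-d+1$ rows; then $\ker p = V$, so $p$ restricts to a linear isomorphism $p|_\mC \colon \mC \to \F_q^{(n-d+1)\times m}$.

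Next, for each $(i,j)$ with $1 \le i \le n-d+1$ and $1 \le j \le m$, let $E_{ij} \in \F_q^{(n-d+1)\times m}$ be the corresponding elementary matrix and put $M_{ij} := (p|_\mC)^{-1}(E_{ij}) \in \mC$. By construction $p(M_{ij}) = E_{ij}$, so $M_{ij}$ has $E_{ij}$ as its top block and hence is of the form $\left(\begin{smallmatrix} E_{ij} \\ A_{ij} \end{smallmatrix}\right)$ for a uniquely determined $A_{ij} \in \F_q^{(d-1)\times m}$; and since the $E_{ij}$ form a basis of $\F_q^{(n-d+1)\times m}$ and $p|_\mC$ is an isomorphism, $\{M_{ij}\}$ is a basis of $\mC$ having the shape of~\eqref{specialbasis}. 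Uniqueness is then immediate: in any basis of that shape the element indexed by $(i,j)$ lies in $\mC$ and is sent by $p$ to the prescribed $E_{ij}$, so by injectivity of $p|_\mC$ it must equal $M_{ij}$.

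Finally, for the rank statement I would note that each $M_{ij}$ is nonzero (its top block $E_{ij}$ is nonzero), so $\rk(M_{ij}) \ge \drk(\mC) = d$; on the other hand $M_{ij}$ has at most $d$ nonzero rows, since among its first $n-d+1$ rows only the $i$-th can be nonzero and there are $d-1$ remaining rows, whence $\rk(M_{ij}) \le d$. Thus $\rk(M_{ij}) = d$. I do not foresee a genuine obstacle here: the only point requiring a little care is checking $\mC \cap V = \{0\}$ and the dimension count that turns $\mC$ into a complement of $V$, after which the projection isomorphism and an elementary count of nonzero rows do all the work.
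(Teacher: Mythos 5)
Your proof is correct and follows essentially the same route as the paper: the projection onto the first $n-d+1$ rows restricted to $\mC$ is an isomorphism (injective by the minimum distance, bijective by dimension count), the basis is obtained by pulling back the elementary matrices $E_{ij}$, and the rank claim follows since each basis element is nonzero and has at most $d$ nonzero rows. You merely spell out the complement $\mC \oplus \mat(U) = \mat$ and the uniqueness check a bit more explicitly than the paper does, which is fine.
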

\begin{proof}
The desired lemma follows from the fact that the projection $\smash{\pi: \mC \longrightarrow \F_q^{(n-d+1)\times m}}$
on the first $n-d+1$ rows
is an isomorphism.
Injectivity is a consequence of the fact that $\mC$ has minimum distance $d$ and bijectivity follows from 
cardinality considerations. The last part of the statement can be seen by observing that $\mC$ has minimum distance $d$ and that all matrices in~\eqref{specialbasis} have $n-d$ zero rows.
\end{proof}

\begin{proposition} \label{prop:decomposition}
Suppose $n \ge 3$ and $2 \le d<n$. We have
$$\delta_q(n\times m, m(n-d+1), d) \le \delta_q(d\times m, m, d) \; \delta_q((n-1)\times m, m(n-d), d)\; \frac {B_q(n \times m, d)}{A_q(n \times m, d)},$$
where
\begin{align*}
    A_q(n \times m, d) &= \qbin{mn}{m(n-d+1)}{q}, \\
    B_q(n \times m, d) &=  \qbin{md}{m}{q}\, \qbin{m(n-1)}{m(n-d)}{q}.
\end{align*}
\end{proposition}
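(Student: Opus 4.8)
The plan is to convert the inequality into a statement about the \emph{number} of MRD codes and then prove that statement by an explicit injection built from Lemma~\ref{lem:basis}.

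First I would clear the $q$-binomial denominators. Since $m(n-d+1)$, $m$, and $m(n-d)$ are exactly the dimensions allowed by the Singleton-like bound (Theorem~\ref{thm:slb}) for minimum distance $d$ in $\F_q^{n\times m}$, $\F_q^{d\times m}$, and $\F_q^{(n-1)\times m}$ respectively, in each of these three cases a code of that dimension and minimum distance $\ge d$ is automatically MRD of minimum distance exactly $d$. Hence, writing $\mM$, $\mM_1$, $\mM_2$ for the sets of MRD codes of minimum distance $d$ in $\F_q^{n\times m}$, $\F_q^{(n-1)\times m}$, $\F_q^{d\times m}$, a direct computation shows that the claimed inequality is equivalent to
\begin{equation}\label{eq:prodcount}
|\mM|\ \le\ |\mM_1|\cdot|\mM_2|.
\end{equation}
So it suffices to construct an injection $\mM\to\mM_1\times\mM_2$.

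Next I would parametrize via Lemma~\ref{lem:basis}. Let $\mC\in\mM$; since $d\ge2$, the unique special basis of Lemma~\ref{lem:basis} exhibits $\mC$ as the graph of a unique $\F_q$-linear map $\phi\colon\F_q^{(n-d+1)\times m}\to\F_q^{(d-1)\times m}$, so that the $n\times m$ matrix with top block $M$ and bottom block $\phi(M)$ has rank $\ge d$ whenever $M\neq0$. Because $d<n$, the domain splits as $\F_q^{(n-d+1)\times m}=R\oplus S$, with $R$ the matrices supported on the first $n-d$ rows and $S$ the matrices supported on the last (that is, $(n-d+1)$-st) row; set $\phi_1:=\phi|_R$ and $\phi_2:=\phi|_S$. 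Define $\mC_1\le\F_q^{(n-1)\times m}$ to be the shortening of $\mC$ at row $n-d+1$ (the codewords of $\mC$ with that row zero, with the row then deleted); concretely $\mC_1$ is the graph over its first $n-d$ rows of $\phi_1$, so $\dim\mC_1=m(n-d)$. Similarly let $\mC_2\le\F_q^{d\times m}$ be the shortening of $\mC$ to the last $d$ rows, i.e.\ the graph over its first row of $\phi_2$, so $\dim\mC_2=m$. Every nonzero codeword of $\mC_1$ (resp.\ $\mC_2$) coincides, after inserting back the zero row (resp.\ the $n-d$ zero top rows), with a nonzero codeword of $\mC$, hence has rank $\ge d$; together with the dimension count and Theorem~\ref{thm:slb} this forces $\mC_1\in\mM_1$ and $\mC_2\in\mM_2$ (for $\mC_2$ one also uses that a $d\times m$ matrix has rank $\le d$, so the distance is exactly $d$). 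Note $n\ge3$ keeps $\F_q^{(n-1)\times m}$ a legitimate ambient space with at least two rows.

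Finally I would check that $\mC\mapsto(\mC_1,\mC_2)$ is injective. Since $R\oplus S$ is the full domain of $\phi$, the map $\phi$, and hence $\mC$, is determined by the pair $(\phi_1,\phi_2)$. On the other hand, Lemma~\ref{lem:basis} applied to $\mC_1$ and to $\mC_2$ says that \emph{every} code in $\mM_1$ (resp.\ $\mM_2$) is the graph of a linear map over its first $n-d$ rows (resp.\ its first row); applying this to our $\mC_1,\mC_2$ returns precisely $\phi_1$ and $\phi_2$ by the uniqueness clause. Thus $(\phi_1,\phi_2)$, and therefore $\mC$, is reconstructible from the pair alone, establishing \eqref{eq:prodcount}; dividing back by the $q$-binomial coefficients gives the proposition. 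I expect the only delicate point to be this reconstruction step: one must check that the row partition used to cut $\mC$ into $\mC_1$ and $\mC_2$ depends only on $(n,m,d)$ (it does) and that Lemma~\ref{lem:basis}, invoked on the two smaller codes, genuinely recovers $\phi_1,\phi_2$ and not some other maps; after that, everything reduces to the trivial fact that padding a matrix with zero rows leaves its rank unchanged. This argument is also the ``shortening'' reinterpretation of the estimates of \cite{antrobus2019maximal} announced at the start of the section.
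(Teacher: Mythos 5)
Your proposal is correct and follows essentially the same route as the paper: both reduce the claimed inequality, after clearing the $q$-binomials, to the count $|\mM|\le|\mM_1|\cdot|\mM_2|$ and construct an injection $\mC\mapsto(\mC_1,\mC_2)$ by splitting the special basis of Lemma~\ref{lem:basis} into two row-blocks (you peel off the last row of the top block where the paper peels off the first, an immaterial difference). Your explicit reconstruction of $\phi_1,\phi_2$ via the uniqueness clause of Lemma~\ref{lem:basis} merely spells out the injectivity that the paper leaves as ``not hard to see.''
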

\begin{proof}
We start by showing that every MRD code $\mC \le \mat$ with minimum distance $d$ can be decomposed as (the embedding of) a direct sum of an MRD code in $\smash{\F_q^{d\times m}}$ and an MRD code in $\smash{\F_q^{(n-1)\times m}}$, both of which have minimum distance exactly $d$.
For this, let $\mC \le \mat$ be an MRD code with minimum distance~$d$. By Lemma~\ref{lem:basis}, $\mC$ has a basis of the form
\begin{align*}
    \left\{ \left( \begin{array}{c}
    E_{ij} \\
    \hline
    A_{ij}
    \end{array} \right) 
    \mid 1 \le i \le n-d+1, \; 1 \le j \le m \right\},
\end{align*}
with $A_{ij}$ and $E_{ij}$ as in the statement of the lemma. We let $\mC_1$ be the code with basis 
\begin{align*}
    \left\{ \left( \begin{array}{c}
    E_{1j} \\
    \hline
    A_{1j}
    \end{array} \right) 
    \mid 1 \le j \le m \right\}.
\end{align*}
It is easy to see that $\mC_1$ is (after a suitable embedding) an MRD code in $\mathbb{F}_q^{d\times m}$ with minimum distance~$d$. Furthermore, we have that the code $\mC_2$ generated by the basis 
\begin{align*}
    \left\{ \left( \begin{array}{c}
    E_{ij} \\
    \hline
    A_{ij}
    \end{array} \right) 
    \mid 2 \le i \le n-d+1, \,  1 \le j \le m \right\}
\end{align*}
is (again after embedding) an MRD code in $\F_q^{(n-1)\times m}$, which also has minimum distance $d$ by the second part of Lemma~\ref{lem:basis}. It is not hard to see that the mapping $\mC \mapsto (\mC_1,\mC_2)$ is injective, from which we obtain
\begin{multline*}
|\{\mC \le \mat \mid \drk(\mC)=d, \, \mC \textnormal{ is MRD}\}| \le  \\
|\{\mC \le \F_q^{d \times m} \mid \drk(\mC)=d, \, \mC \textnormal{ is MRD}\}| \cdot |\{\mC \le \F_q^{(n-1)\times m} \mid \drk(\mC)=d, \, \mC \textnormal{ is MRD}\}|.
\end{multline*}
Dividing both sides by $A_q(n\times m, d) B_q(n \times m, d)$ yields the desired result.
\end{proof}

It is well-known that the dual of an MRD code in $\mat$ of minimum distance $d$ is an MRD code in $\mat$ of minimum distance $n-d+2$; see e.g.~\cite[Theorem 5.5]{delsarte1978bilinear}.
Since the map that sends a code to its dual is a bijection,
this simple fact can be rephrased in terms of density functions as follows.

\begin{proposition} \label{prop:dual}
For all $d \ge 2$ we have
$$\delta_q(n \times m, m(n-d+1),d) = \delta_q(n \times m, m(d-1),n-d+2).$$
\end{proposition}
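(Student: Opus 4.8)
The plan is to run everything through the trace duality on $\mat$ together with Delsarte's theorem on duals of MRD codes. First I would recall that the bilinear form $\langle M,N\rangle := \Tr(MN^{\top})$ is non-degenerate on $\mat$, so that $\mC \mapsto \mC^{\perp}$ is an inclusion-reversing bijection of the lattice of subspaces of $\mat$ onto itself with $\dim(\mC^{\perp}) = mn - \dim(\mC)$. Since $mn - m(n-d+1) = m(d-1)$, this map restricts to a bijection between the set of $m(n-d+1)$-dimensional codes and the set of $m(d-1)$-dimensional codes; in particular the two denominators appearing in the statement coincide, because $\qbin{mn}{m(n-d+1)}{q} = \qbin{mn}{m(d-1)}{q}$ by the symmetry of the $q$-binomial coefficient.

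Next I would check that this bijection matches the two numerators. By the Singleton-like bound (Theorem~\ref{thm:slb}), a code of dimension $m(n-d+1)$ has minimum distance at most $d$, hence it satisfies $\drk(\mC) \ge d$ exactly when it is MRD of minimum distance $d$; symmetrically, a code of dimension $m(d-1)$ has minimum distance at most $n-d+2$, and $n-d+2$ lies in the admissible range $\{1,\dots,n\}$ precisely because $d \ge 2$, so such a code satisfies $\drk \ge n-d+2$ exactly when it is MRD of minimum distance $n-d+2$. Delsarte's theorem (\cite[Theorem 5.5]{delsarte1978bilinear}) now gives that $\mC$ is MRD of minimum distance $d$ if and only if $\mC^{\perp}$ is MRD of minimum distance $n-d+2$, so $\mC \mapsto \mC^{\perp}$ carries the family counted in the numerator of $\delta_q(n\times m, m(n-d+1), d)$ bijectively onto the family counted in the numerator of $\delta_q(n\times m, m(d-1), n-d+2)$. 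Dividing the equal numerators by the equal denominators yields the claimed identity.

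I do not expect a real obstacle here; the proof is essentially a bookkeeping argument. The only point that requires a little care is the translation between the ``$\drk \ge d$'' formulation of the density function in Definition~\ref{def:delta} and the ``exactly MRD'' formulation of Delsarte's duality theorem, which is resolved by invoking the Singleton-like bound at both relevant dimensions, together with the observation that the hypothesis $d \ge 2$ is exactly what keeps $n-d+2$ within $\{1,\dots,n\}$ so that the right-hand density function is well defined.
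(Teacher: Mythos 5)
Your proof is correct and follows essentially the same route as the paper: Delsarte's duality theorem (\cite[Theorem 5.5]{delsarte1978bilinear}) applied to the bijection $\mC \mapsto \mC^{\perp}$, with the dimension count $mn-m(n-d+1)=m(d-1)$ and the symmetry of the $q$-binomial coefficient. Your extra care in reconciling the ``$\drk \ge d$'' formulation of Definition~\ref{def:delta} with ``MRD of minimum distance exactly $d$'' via the Singleton-like bound is a detail the paper leaves implicit, and it is handled correctly.
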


We now combine Propositions~\ref{prop:decomposition} and~\ref{prop:dual} in order to illustrate how the density functions of MRD codes with different parameters behave with respect to each other.

\begin{corollary} \label{cor:specbas}
For all $d \ge 2$ we have
\begin{align*}
    \delta_q(n \times m, m(n-d+1), d) \le \delta_q(2 \times m, m, 2)^{(n-d+1)(d-1)} \; \frac{ \qbin{2m}{m}{q}^{(n-d+1)(d-1)}}{\qbin{mn}{m(n-d+1)}{q}}.
\end{align*}
\end{corollary}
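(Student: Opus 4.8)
The plan is to iterate Proposition~\ref{prop:decomposition} until the ambient matrix space has only two rows, and then clean up the resulting telescoping product of the combinatorial factors using Proposition~\ref{prop:dual}.

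First I would set up the induction on $n$ for fixed $m$ and $d$. If $d=n$, the left-hand side is $\delta_q(n \times m, m, n)$, and applying Proposition~\ref{prop:dual} twice (or noting that the claimed bound with $d=n$ reads $\delta_q(n\times m,m,n)\le \delta_q(2\times m,m,2)^{n-1}\qbin{2m}{m}{q}^{n-1}/\qbin{mn}{m}{q}$) would need the base-case analysis. So I would instead treat the general case directly: for $3 \le n$ and $2 \le d < n$, Proposition~\ref{prop:decomposition} gives
$$\delta_q(n\times m, m(n-d+1), d) \le \delta_q(d\times m, m, d)\;\delta_q((n-1)\times m, m(n-d), d)\;\frac{B_q(n\times m,d)}{A_q(n\times m,d)}.$$
The second factor $\delta_q((n-1)\times m, m((n-1)-d+1), d)$ is again a density of MRD codes with one fewer row, and the same minimum distance $d$, so the induction hypothesis applies to it (provided $d < n-1$; when $d = n-1$ the reduced space is $d\times m$ and the factor becomes $\delta_q(d\times m, m, d)$ directly). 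Combining the recursion, one obtains $\delta_q(n\times m, m(n-d+1), d)$ bounded by $\delta_q(d\times m, m, d)^{n-d+1}$ times a product of $q$-binomial ratios.

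Next I would handle the ``square'' factor $\delta_q(d\times m, m, d)$. Here Proposition~\ref{prop:dual} with $n$ replaced by $d$ gives $\delta_q(d\times m, m, d) = \delta_q(d\times m, m(d-1), 2)$, which is the density of MRD codes of minimum distance $2$ in $\F_q^{d\times m}$. Now I would run Proposition~\ref{prop:decomposition} again, but this time to reduce the \emph{number of rows} while keeping $d'=2$: since $2 \le 2 < d$ (for $d\ge 3$), Proposition~\ref{prop:decomposition} applies with $(n,d)=(d,2)$ and yields $\delta_q(d\times m, m(d-1), 2) \le \delta_q(2\times m, m, 2)\,\delta_q((d-1)\times m, m(d-2), 2)\cdot(\text{$q$-binomial factor})$, and iterating this drives it down to $\delta_q(2\times m, m, 2)^{d-1}$ times more $q$-binomial ratios. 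Substituting back, the total power of $\delta_q(2\times m, m, 2)$ becomes $(n-d+1)(d-1)$, exactly as in the statement.

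The main obstacle — and the part requiring genuine care rather than routine manipulation — is the bookkeeping of all the $q$-binomial coefficient factors $B_q/A_q$ accumulated over the two nested inductions, to verify that they telescope precisely to $\qbin{2m}{m}{q}^{(n-d+1)(d-1)}/\qbin{mn}{m(n-d+1)}{q}$. Each application of Proposition~\ref{prop:decomposition} contributes a factor $\qbin{md'}{m}{q}\qbin{m(n'-1)}{m(n'-d')}{q}/\qbin{mn'}{m(n'-d'+1)}{q}$ for the current parameters $(n',d')$; one must check that the denominators at each step cancel against numerators introduced at adjacent steps, leaving only $\qbin{2m}{m}{q}$ raised to the count of times the base code $2\times m$ was reached and the single global denominator $\qbin{mn}{m(n-d+1)}{q}$. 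An alternative, perhaps cleaner, route is to avoid tracking these ratios step by step: instead, establish the \emph{integer} inequality on the number of MRD codes first — namely that the number of $n\times m$ MRD codes of distance $d$ is at most the number of $2\times m$ MRD codes of distance $2$ raised to the power $(n-d+1)(d-1)$ times $\qbin{2m}{m}{q}^{(n-d+1)(d-1)}$ — by iterating the injective decomposition map $\mC\mapsto(\mC_1,\mC_2)$ from the proof of Proposition~\ref{prop:decomposition} (together with the dual bijection from Proposition~\ref{prop:dual}), and only divide by $\qbin{mn}{m(n-d+1)}{q}$ at the very end. I would lean toward this second approach, as it isolates the combinatorial essence and relegates the $q$-binomial algebra to a single final division.
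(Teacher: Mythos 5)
Your proposal is correct and follows essentially the same route as the paper: iterate Proposition~\ref{prop:decomposition} to reach $\delta_q(d\times m,m,d)^{n-d+1}$, convert via Proposition~\ref{prop:dual} to $\delta_q(d\times m,m(d-1),2)$, and iterate the decomposition again with distance $2$ down to $\delta_q(2\times m,m,2)^{d-1}$. The bookkeeping you worry about resolves itself: the $q$-binomial ratios telescope within each chain of applications, and the leftover factors $\qbin{md}{m}{q}^{\,n-d+1}$ and $\qbin{md}{m(d-1)}{q}^{\,-(n-d+1)}$ cancel by the symmetry of the $q$-binomial coefficient, leaving exactly the stated expression.
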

\begin{proof}
By applying Proposition~\ref{prop:decomposition} $n-d+1$ times one obtains
\begin{align} \label{pf:specbas1}
\delta_q(n\times m, m(n-d+1), d) &\le \delta_q(d\times m, m, d)^{(n-d+1)}\, \frac{\qbin{md}{m}{q}^{(n-d+1)}}{\qbin{mn}{m(n-d+1)}{q}}.
\end{align}
By applying the same proposition $d-1$ times one obtains
\begin{align}  \label{pf:specbas2}
\delta_q(d \times m, m(d-1), 2) &\le \delta_q(2 \times m, m, 2)^{(d-1)}\, \frac{\qbin{2m}{m}{q}^{(d-1)}}{\qbin{md}{m(d-1)}{q}}.
\end{align}
Moreover, using Proposition~\ref{prop:dual} we find that $\delta_q(d\times m, m, d) = \delta_q(d \times m, m(d-1), 2)$ which, combined with~\eqref{pf:specbas1} and \eqref{pf:specbas2}, yields the desired result.
\end{proof}

\begin{remark}
Taking the limit superior in Corollary~\ref{cor:specbas} as $q \to +\infty$ and $m \to +\infty$, and combining it with~\cite[Corollary VII.5]{antrobus2019maximal}, we obtain the same asymptotic bounds for the density of MRD codes as~\cite[Theorem VII.6]{antrobus2019maximal}.
The $q$-binomial coefficients can be estimated using~\eqref{eq:qbin} and~\eqref{eq:pias}.
\end{remark}

\section*{Acknowledgement}
The authors are very grateful to the Referees of this paper for their very careful reading of the manuscript and suggestions.

\appendix

\section{Some Proofs}

\begin{proof}[Proof of Lemma~\ref{lem:nuq}]
We establish the two groups of asymptotic estimates separately. Easy computations show that
\begin{align} \label{pf:nuq1}
\frac{\nu_q(N,k,\ell)}{q^{k(N-k)}} = \frac{\displaystyle\prod_{i=N-k+1}^N\left(1- \frac{1}{q^i}\right)-2\displaystyle\prod_{i=1}^{k}\left(1- \frac{1}{q^i}\right)+\displaystyle\prod_{i=1}^{N-k-\ell}\left(1- \frac{1}{q^i}\right)\displaystyle\prod_{i=1}^{k}\left(1- \frac{1}{q^i}\right)}{\displaystyle\prod_{i=1}^{k}\left(1- \frac{1}{q^i}\right)}.
\end{align}
First note that 
$$\displaystyle\prod_{i=1}^{k}\left(1- \frac{1}{q^i}\right) \sim 1 \quad \mbox{ as } q \to +\infty.$$
The different cases are treated separately and for each of them we focus on computing the  asymptotics of the numerator of the right-hand side of~\eqref{pf:nuq1}.

\underline{Case 1}. We first assume $\max\{0,N-2k\} \le \ell < N-k-1$. Note that for $q \to +\infty$ we have
    \begin{align*}
        \displaystyle\prod_{i=N-k+1}^N\left(1- \frac{1}{q^i}\right) &= 1 + O\left(\frac{1}{q^{3}}\right), \\
        -2\displaystyle\prod_{i=1}^{k}\left(1- \frac{1}{q^i}\right) &= -2+\frac{2}{q}+\frac{2}{q^2}+O\left(\frac{1}{q^{3}}\right), \\ \displaystyle\prod_{i=1}^{N-k-\ell}\left(1- \frac{1}{q^i}\right)\displaystyle\prod_{i=1}^{k}\left(1- \frac{1}{q^i}\right) &= 1-\frac{2}{q}-\frac{1}{q^2} + O\left(\frac{1}{q^{3}}\right),
    \end{align*}
    which all together show that the numerator's asymptotic for $q \to +\infty$ is $q^{-2}$.
    
\underline{Case 2}. Next assume that $\ell=N-k-1=\max\{0,N-2k\}$. If $k=N-1$ we have
    \begin{align*}
        \displaystyle\prod_{i=2}^N\left(1- \frac{1}{q^i}\right)-\displaystyle\prod_{i=1}^{N-1}\left(1- \frac{1}{q^i}\right)-\frac{1}{q}\displaystyle\prod_{i=1}^{N-1}\left(1- \frac{1}{q^{i}}\right)  = \frac{1}{q^2} + O\left(\frac{1}{q^3}\right) \quad \mbox{ as } q \to +\infty,
    \end{align*}
    since all terms in the expression which are preponderant with respect to ${q^{-2}}$ vanish.  We leave the case $k=1$ to the reader.

\underline{Case 3}. Assume that $\ell=N-k-1 >  \max\{0,N-2k\}$. The numerator in~\eqref{pf:nuq1} reduces to
    \begin{align*}
        \displaystyle\prod_{i=N-k+1}^N\left(1- \frac{1}{q^i}\right)-\displaystyle\prod_{i=1}^{k}\left(1- \frac{1}{q^i}\right)-\frac{1}{q}\displaystyle\prod_{i=1}^{k}\left(1- \frac{1}{q^{i}}\right)  = \frac{2}{q^2} + O\left(\frac{1}{q^3}\right) \quad \mbox{ as } q \to +\infty,
    \end{align*}
    from which the statement follows.

\underline{Case 4}. Assume that $\ell=N-k$. By convention (Notation~\ref{notat:nu}) we have
\begin{align*}
    \prod_{i=1}^{N-k-\ell}\left(1- \frac{1}{q^i}\right) = 1
\end{align*}
and therefore the numerator of the right-hand side of~\eqref{pf:nuq1} becomes
    $$\displaystyle\prod_{i=N-k+1}^N\left(1- \frac{1}{q^i}\right)-\displaystyle\prod_{i=1}^{k}\left(1- \frac{1}{q^i}\right)  = \frac{1}{q} +  O\left(\frac{1}{q^2}\right) \quad \mbox{ as } q \to +\infty.$$ 
    The desired estimate follows.

In order to prove the second group of asymptotic
estimates, observe first that by Notation~\ref{notat:nu} we have
\begin{align*}
    \frac{\nu_q(N,k,N-k)^2}{\qbin{N}{k}{q}} - \nu_q(N,k,\ell) =  
    \frac{q^{2k(N-k)}}{\qbin{N}{k}{q}}-q^{(2k-N+\ell)(N-k)}\prod_{i=\ell}^{N-k-1}(q^{N-k}-q^i).
\end{align*}
Using the definition of the $q$-binomial coefficient we find that the above expression reduces to
\begin{align*}
q^{k(N-k)}\, \frac{\displaystyle\prod_{i=1}^k\left(1-\frac{1}{q^i}\right)-\displaystyle\prod_{i=1}^{N-k-\ell}\left(1-\frac{1}{q^i}\right)\displaystyle\prod_{i=N-k+1}^N\left(1-\frac{1}{q^i}\right)}{\displaystyle\prod_{i=N-k+1}^N\left(1-\frac{1}{q^i}\right)}.
\end{align*}
The asymptotic estimates can be conveniently derived from this formula, examining the various cases separately.
\end{proof}

\begin{proof}[Proof of Lemma~\ref{lem:num}] By Equation~\eqref{pf:nuq1} we have
\begin{multline} \label{pf:num1}
\nu_q(mn,m(n-d+1),mi) = \\ q^{m(n-d+1)m(d-1)}\, \left( \frac{\displaystyle\prod_{i=m(d-1)+1}^{mn}\left(1- \frac{1}{q^i}\right)}{\displaystyle\prod_{i=1}^{m(n-d+1)}\left(1- \frac{1}{q^i}\right)} -2+\displaystyle\prod_{i=1}^{m(d-1-i)}\left(1- \frac{1}{q^i}\right)\right).
\end{multline}
By the definition of $\pi(q)$ from~\eqref{not:gamma} we have 
\begin{align} \label{pf:num2}
    \frac{\displaystyle\prod_{i=m(d-1)+1}^{mn}\left(1- \frac{1}{q^i}\right)}{\displaystyle\prod_{i=1}^{m(n-d+1)}\left(1- \frac{1}{q^i}\right)} \sim \pi(q) \quad \mbox{ as } m \to +\infty.
\end{align}
Furthermore, as $m \to +\infty$, we  have
\begin{align}\label{pf:num3}
    \displaystyle\prod_{i=1}^{m(d-1-i)}\left(1- \frac{1}{q^i}\right) \sim \begin{cases}
    1 &\textnormal{ if } i=d-1, \\
    {1}/{\pi(q)} &\textnormal{ if } i<d-1.
    \end{cases}
\end{align}
Combining~\eqref{pf:num1},~\eqref{pf:num2} and~\eqref{pf:num3} the desired result follows.
\end{proof}

\ 

\bigskip

\bibliographystyle{amsplain}
\bibliography{ourbib}

\end{document}